\title[Rigidity of certain PCF endomorphisms]{Rigidity and height bounds for certain post-critically finite endomorphisms of $\PP^N$}
\author{Patrick Ingram}
\date{\today}
\address{Colorado State University, Fort Collins}
\subjclass[2010]{37P15 (Primary) 32H50, 37P30 (Secondary)}
\newcommand{\QQ}{\mathbb{Q}}
\newcommand{\ZZ}{\mathbb{Z}}
\newcommand{\CC}{\mathbb{C}}
\newcommand{\RR}{\mathbb{R}}
\newcommand{\FF}{\mathbb{F}}
\newcommand{\PP}{\mathbb{P}}
\renewcommand{\AA}{\mathbb{A}}
\newcommand{\Ocal}{\mathcal{O}}
\newcommand{\Spec}{\operatorname{Spec}}
\newcommand{\PGL}{\operatorname{PGL}}
\newcommand{\origin}{\mathbf{0}}
\newcommand{\Ind}{\operatorname{Ind}}
\newcommand{\Res}{\operatorname{Res}}
\newcommand{\Div}{\operatorname{Div}}
\newcommand{\Proj}{\operatorname{Proj}}
\renewcommand{\phi}{\varphi}
\renewcommand{\epsilon}{\varepsilon}
\newcommand{\moduli}[2]{\mathsf{M}^{#1}_{#2}}			
\newcommand{\pow}[2]{\operatorname{Pow}^{#1}_{#2}}		
\newcommand{\powm}[2]{{{\mathsf{P}}}^{#1}_{#2}}				
\newcommand{\RPE}[2]{\mathsf{RPE}^{#1}_{#2}}				
\newcommand{\homspace}[2]{\operatorname{Hom}^{#1}_{#2}}	
\newcommand{\ph}{\mathcal{B}}
\newtheorem{theorem}{Theorem}[section]
\newtheorem{lemma}[theorem]{Lemma}
\newtheorem{conjecture}[theorem]{Conjecture}
\newtheorem{corollary}[theorem]{Corollary}
\theoremstyle{remark}
\theoremstyle{definition}
\begin{document}
\begin{abstract}
The holomorphic map $f:\PP^N\to\PP^N$ is called post-critically finite (PCF) if the forward image of the critical locus, under iteration of $f$, has algebraic support. In the case $N=1$, a deep result of Thurston implies that there are no algebraic families of PCF morphisms, other than a well-understood exceptional class known as the flexible Latt\`{e}s maps. This note proves a corresponding result in arbitrary dimension, for a certain subclass of morphism. Specifically, we restrict attention to morphisms  $f:\PP^N\to\PP^N$ of degree $d\geq 2$, with a totally invariant hyperplane $H\subseteq\PP^N$, such that the restriction of $f$ to $H$ is the $d$th power map in some coordinates. This defines a subvariety $\powm{N}{d}\subseteq\moduli{N}{d}$
 of the space of coordinate-free endomorphisms of $\PP^N$. We prove that there are no families of PCF maps in $\powm{N}{d}$, and derive several related arithmetic results.
\end{abstract}
\maketitle

\section{Introduction}

A fundamental maxim of complex holomorphic dynamics indicates that one understands dynamical systems largely by understanding their critical orbits, that is, the forward orbits of their ramification loci.
Given this, maps for whom all of these orbits are finite take on a special importance.
Let $N\geq 1$, and let $f:\PP^N\to\PP^N$ be a morphism of degree $d\geq 2$.  We will say that $f$ is \emph{post-critically finite} (PCF) if and only if the forward orbit of the ramification locus of $f$, under the action of $f$, is supported on a finite union of algebraic hypersurfaces.
In the one-dimensional case, flexible Latt\`{e}s maps present an important class of PCF morphisms.
A fundamental result of Thurston~\cite{thurston} shows that, other than these Latt\`{e}s examples, univariate PCF maps do not come in families, but rather constitute a countable union of 0-dimensional subvarieties of the appropriate moduli space.

PCF morphisms $f:\PP^N\to\PP^N$ have been studied when $N\geq 2$ \cite{fs, jonsson, koch, rong, ueda}, but so far little is known about families of such maps. We will call a morphism $f:\PP^N\to\PP^N$ a \emph{monic polynomial} if there is a hyperplane $H\subseteq\PP^N$ which is totally invariant under $f$, and such that the restriction of $f$ to $H$ is the $d$th-power map, relative to some coordinates. For each $N\geq 1$ and $d\geq 2$ this defines a subvariety $\powm{N}{d}\subseteq \moduli{N}{d}$ of the moduli space of endomorphisms of $\PP^N$ of degree $d$ ($\powm{1}{d}$ is the usual space of univariate polynomials).
Our first result is a Thurston-type rigidity statement for the PCF points in this subvariety of the moduli space.
\begin{theorem}\label{th:rigidity}
Let $k$ be an algebraically closed field of characteristic 0 or $p>d$. Then the locus of post-critically finite maps in $\powm{N}{d}(k)$ is a countable union of $0$-dimension subvarieties.
\end{theorem}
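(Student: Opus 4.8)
\emph{Strategy and reduction.} The plan is to reduce the statement to a height bound in the number-field case and to an everywhere-good-reduction statement in the function-field case, both of which rest on a single pluripotential-theoretic estimate attached to the totally invariant hyperplane. The dimension of a scheme of finite type is unchanged by extension of the base field, and the PCF condition on $f$ is cut out, level by level, by the stabilization of the sequence of forward images of the critical locus --- a condition defined over the prime field, via resultants of the defining forms of the $f^i(C_f)$. Hence it suffices to exclude a positive-dimensional component of the PCF locus over $\overline{\QQ}$ (characteristic $0$) and over $\overline{\FF}_p$ with $p>d$. A positive-dimensional variety over $\overline{\QQ}$ carries points of arbitrarily large height, so in characteristic $0$ it is enough to bound the height of an arbitrary PCF point $[f]\in\powm{N}{d}(\overline{\QQ})$. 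Over $\overline{\FF}_p$ all closed points have height $0$, so instead I pass to the generic point of the putative component, obtaining a PCF endomorphism $f\in\powm{N}{d}(K)$ over a function field $K$ of transcendence degree $1$ over $\overline{\FF}_p$ that is non-isotrivial as a point of $\moduli{N}{d}$; there it will suffice to show that PCF forces good reduction at every place of $K$, since then the coefficients of a normalized representative are regular functions on the smooth projective model of $K$, hence constant, contradicting non-isotriviality.

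\emph{The critical height.} Write $C_f=(d-1)H+C'$, where $C'$ is the affine part of the ramification locus: a hypersurface of degree $N(d-1)$ meeting $H$ in the coordinate hyperplanes, which form the post-critical locus of the power map $f|_H$. Since $H$ is fixed, $f$ is PCF precisely when $C'$ is preperiodic. Because $f$ is monic, at each place $v$ the affine space $\AA^N=\PP^N\setminus H$ carries a Green's function $g_{f,v}\colon\AA^N(\CC_v)\to[0,\infty)$ with $g_{f,v}\circ f=d\,g_{f,v}$ and $g_{f,v}=\log^+\|\cdot\|_v+O(1)$, the defect being bounded in terms of the $v$-adic size $\|f\|_v$ of the coefficients of the normalized $f$; being monic is exactly what normalizes the equilibrium measure $(dd^c g_{f,v})^N$ to have ``capacity one'', the higher-dimensional analogue of the normalization used for one-variable polynomials. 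This is the point at which the hypothesis $p>d$ (respectively, invertibility of $d$ in characteristic $0$) is used, ensuring that $C'$ has the expected degree and that $g_{f,v}$ behaves as in the complex- and $p$-adic-analytic settings. Following Zhang's formalism, set $\hat h_{\mathrm{crit}}(f)=\hat h_f(C')=\sum_v n_v\lambda_v(f)$, the canonical height of the hypersurface $C'$, where $\lambda_v(f)\ge 0$ is the local energy built from $g_{f,v}$ and the current of integration along $C'$; concretely $\lambda_v(f)=\lim_n d^{-n}h_v(f^n(C'))$ for a suitable $v$-adic size $h_v$ of hypersurfaces. If $f$ is PCF then the hypersurfaces $f^n(C')$ take only finitely many values, so every $\lambda_v(f)=0$ and $\hat h_{\mathrm{crit}}(f)=0$.

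\emph{The crux, and the main obstacle.} The heart of the proof is the reverse estimate, carried out place by place: vanishing of the local energy forces $f$ to be $v$-adically small, quantitatively $\lambda_v(f)\ge c(N,d)\log^+\|f\|_v$ up to an archimedean correction, so that $\lambda_v(f)=0$ gives $\|f\|_v\le 1$ (good reduction) at non-archimedean $v$ and $\|f\|_v\le C(N,d)$ at archimedean $v$; summing over $v$ yields $\hat h_{\mathrm{crit}}(f)\ge c(N,d)\,h_{\mathrm{mod}}(f)-c'(N,d)$, where $h_{\mathrm{mod}}$ is a fixed height on the moduli space. The non-archimedean instance is a combinatorial computation with the piecewise-linear structure of $g_{f,v}$ near $H$ on the Berkovich analytification --- the several-variable analogue of the integrality of one-dimensional PCF parameters --- and is the step that genuinely needs $p>d$. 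The archimedean instance is the real obstacle: it is the $\PP^N$ analogue of the fact that the filled Julia set of a monic one-variable polynomial has logarithmic capacity exactly one, and amounts to a Monge--Amp\`ere-energy lower bound for $g_{f,v}$ along $C'$ in terms of $\|f\|_v$ --- equivalently, to the assertion that a monic polynomial endomorphism whose critical hypersurface fails to escape cannot be a large perturbation of the power map. Granting these estimates the theorem follows: a PCF $f$ over $\overline{\QQ}$ has $\hat h_{\mathrm{crit}}(f)=0$, hence $h_{\mathrm{mod}}(f)\le c'(N,d)/c(N,d)$, which is incompatible with the existence of a positive-dimensional PCF subvariety over $\overline{\QQ}$; and a PCF $f$ over a function field over $\overline{\FF}_p$ has $\lambda_v(f)=0$ at every $v$, hence good reduction everywhere, hence is isotrivial, contradicting non-isotriviality.
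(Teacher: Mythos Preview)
Your overall strategy is sound, and in particular your characteristic-$p$ argument (pass to the generic point of a putative positive-dimensional component, obtain a non-isotrivial PCF endomorphism over a function field, show vanishing of the local escape rate forces good reduction everywhere, hence constancy) is exactly what the paper does. The key difference is that the paper runs this \emph{same} function-field argument uniformly in characteristic $0$ as well, and thereby avoids number-field heights and archimedean places entirely. Concretely: given algebraically closed $k$ of either allowed characteristic and a non-constant $\phi:X\to\pow{N}{d}$ from a curve into one of the loci $C_{n,m}$, the paper views $\phi$ as a PCF polynomial over $k(X)$, completes at an arbitrary closed point $\beta\in X(k)$, and applies the non-archimedean estimate $\lambda_{\mathrm{crit},v}(f)=\ph_v(f)$ (with \emph{zero} error term, since the residue characteristic is $0$ or $p>d$) to conclude $\ph_v(f)=0$; the coefficients are then regular at every closed point, hence constant.

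So the step you flag as the ``real obstacle'' --- the archimedean Monge--Amp\`ere lower bound --- is not needed for Theorem~\ref{th:rigidity} at all; it enters only in the quantitative height comparison of Theorem~\ref{th:heights}. Your detour through $\overline{\QQ}$ and global heights would work but imports the archimedean analysis unnecessarily. Two smaller points of contrast: first, the paper's local height $\lambda_v(D)$ on divisors is not an integrated energy built from currents but an elementary sup (the largest $-\log|\beta_N|$ over points of $D$ whose other coordinates lie on the unit torus); second, the crucial non-archimedean lower bound comes not from Berkovich combinatorics but from a purely algebraic fact --- the assignment $f\mapsto f_*(C_f)$, read as a rational map of weighted projective spaces $\Proj A\to\Proj B$, is a \emph{morphism} (Lemma~\ref{lem:crux}) --- which via the Nullstellensatz gives $\lambda_v(f_*(C_f))=d\,\ph_v(f)$ exactly at every place of good residue characteristic.
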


The family of maps to which this theorem applies admits an elementary description. If $f\in\powm{N}{d}$ then $f$ has a model $f=[f_0:\cdots :f_N]$ in some homogeneous variables $[x_0:\cdots :x_N]$ such that $f_N=x_N^d$, and such that for each $0\leq i<N$ we have $f_i=x_i^d+x_Ng_i(x_0, ..., x_N)$ for some homogeneous form $g_i$ of degree $d-1$. We note that the hypothesis on the characteristic of $k$ in Theorem~\ref{th:rigidity} is clearly necessary; for example if $p<d$, then
\[f=\left[x_0^d+a_1x_0^px_N^{d-p}:\cdots x_{N-1}+a_Nx_{N-1}^px_N^{d-p}:x_N^d\right]\]
fixes its critical locus for any $a_1, ..., a_N\in k$, giving a map $\AA^N(k)\to\powm{N}{d}(k)$ which lands entirely in the PCF locus, and turns out to have finite fibres.

Theorem~\ref{th:rigidity} is arguably of most interest over the field $k=\CC$, but the proof is largely algebraic, and algebraically closed fields of positive characteristic introduce no particular challenges (given the restrictions in the statement of the theorem). It is not hard to show, from Theorem~\ref{th:rigidity}, that the PCF points in $\powm{N}{d}(\CC)$ are in fact contained in $\powm{N}{d}(\overline{\QQ})$. In particular, every PCF map $f\in\powm{N}{d}(\CC)$ is defined over some finite extension of the prime field $\QQ$. This brings the problem of studying monic PCF polynomial endomorphisms of $\PP^N_\CC$ into the realm of arithmetic geometry.

Our next result is arithmetic in nature, and to state it we define two pieces of notation. If $\pow{N}{d}$ is the parameter space of monic polynomials (a finite cover of $\powm{N}{d}$ defined below), then there is a Weil height corresponding to a particular weighted projective completion of $\pow{N}{d}$ which we will denote $h_{\mathrm{Weil}}$. In addition, we define below a non-negative real-valued function $h_\mathrm{crit}:\pow{N}{d}\to\RR$ corresponding to the arithmetic ``escape rate'' of the ramification locus; the definition is based on local analysis, and satisfies the property that $h_\mathrm{crit}(f)=0$ when $f$ is PCF (that is, finite orbits do not escape by this measure).  Although $h_\mathrm{crit}$ is defined in an \emph{ad hoc}, non-geometric way, and hence has no apparent reason to relate to the geometric Weil height, it turns out that the two functions are essentially the same.

\begin{theorem}\label{th:heights}
For $f\in \pow{N}{d}(\overline{\QQ})$ we have
\[h_{\mathrm{crit}}(f)= h_{\mathrm{Weil}}(f)+O(1).\]
In particular, the locus of PCF maps in $\pow{N}{d}(\overline{\QQ})$ is a set of bounded height.
\end{theorem}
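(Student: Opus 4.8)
The plan is to prove the estimate place by place. Fix a number field $K$ over which $f$ is defined. Both $h_{\mathrm{crit}}$ and $h_{\mathrm{Weil}}$ decompose as suitably normalized sums over the places $v$ of $K$ of local terms $\lambda_v(f)\ge 0$ and $\mu_v(f)\ge 0$, the first being the local escape rate of the ramification locus and the second the local contribution of the Weil height attached to the weighted projective completion of $\pow{N}{d}$ (a weighted maximum of $\log|\cdot|_v$ over the coordinate data of $f$). I would first show that $\lambda_v(f)=\mu_v(f)$ exactly whenever $v$ is non-archimedean of residue characteristic exceeding $d$, and that $|\lambda_v(f)-\mu_v(f)|\le C(N,d)$ for the remaining places---the archimedean ones together with the places over the finitely many primes $p\le d$. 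These exceptional places contribute only $O_{N,d}(1)$ in total (for the archimedean places, and for each fixed rational prime, the corresponding $n_v$ sum to $[K:\QQ]$, cancelling the normalizing factor), so summing over $v$ gives $h_{\mathrm{crit}}(f)=h_{\mathrm{Weil}}(f)+O_{N,d}(1)$. The second assertion is then immediate: $h_{\mathrm{crit}}(f)=0$ when $f$ is PCF, which forces $h_{\mathrm{Weil}}(f)=O_{N,d}(1)$ there.

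For the exact comparison at a non-archimedean place $v$ of residue characteristic $p>d$, I would pass to the affine chart $\AA^N=\PP^N\setminus H$, on which $f$ becomes a polynomial endomorphism $F$ with $F_i=x_i^d+(\text{terms divisible by the forms }g_i)$, and compute $\lambda_v(f)$ from the $v$-adic growth of the defining data of the iterated images $F^n(\mathrm{Crit}(F))$ of the ramification hypersurface $\mathrm{Crit}(F)=\{\det DF=0\}$. The key ingredients are: (a) an elimination identity expressing the equation of $\mathrm{Crit}(F)$ polynomially in the coefficients of the $g_i$, with leading part $d^N\prod_i x_i^{d-1}$, which reduces to something nonzero once $p>d$, so that the $v$-adic size of $\mathrm{Crit}(F)$, and of each of its iterates, is governed by the same weighted maximum that defines $\mu_v(f)$; (b) a purely ultrametric telescoping lemma producing an explicit forward-invariant ``escape region''---the complement of a polydisc of radius comparable to $\|g\|_v$---on which $\|F(P)\|_v=\|P\|_v^{d}$ with no lower-order interference, so that the defining sizes of $F^n(\mathrm{Crit}(F))$ stabilize after one step and the limiting escape rate equals the $v$-adic size of $F(\mathrm{Crit}(F))$ itself; and (c) assembling (a) and (b), together with the compatibility of this size function with pushforward of cycles, to read off $\lambda_v(f)=\mu_v(f)$. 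The inequality at the remaining places comes from running the same argument crudely: at archimedean places the ultrametric equalities become inequalities with error terms bounded in terms of $N$ and $d$ by homogeneity, while at places above primes $p\le d$ the only new feature is that various integers entering $\det DF$ and the telescoping estimates may fail to be units, which again costs only $O_{N,d}(1)$.

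I expect step (b) to be the main obstacle when $N\ge 2$. For $N=1$ the ramification locus is a finite point set, so it suffices to exhibit a single critical point escaping at the maximal rate; for $N\ge 2$ one must control the escape of a positive-dimensional hypersurface, rule out (or show harmless) the appearance of forward-invariant or slowly-escaping components of $\mathrm{Crit}(F)$ as its iterates spread toward $H$, and verify that the $v$-adic size function on cycles---presumably built from resultants or Chow forms---interacts multiplicatively enough with $F_*$ that the limit defining $\lambda_v$ collapses to a one-step quantity. A secondary, more bookkeeping-type difficulty is calibrating the weights in the weighted projective completion of $\pow{N}{d}$ so that each monomial of each $g_i$ carries exactly the weight dictated by how it enters $\det DF$ and then escapes under $F$; only with this precise calibration, rather than up to a positive multiplicative constant, does the unadorned identity $h_{\mathrm{crit}}=h_{\mathrm{Weil}}+O(1)$ hold.
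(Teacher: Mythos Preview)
Your overall strategy matches the paper's: decompose both heights into local contributions, establish $\lambda_{\mathrm{crit},v}(f)=\ph_v(f)$ exactly at non-archimedean places of residue characteristic $>d$ and $\lambda_{\mathrm{crit},v}(f)=\ph_v(f)+O_{N,d}(1)$ at the remaining places, then sum. Your escape-region step (b) corresponds to the paper's Lemma~\ref{lem:localtransform}, and your size function on hypersurfaces built from resultant data is exactly the paper's $\lambda_v$ on $\Div^*(\PP^N)$, constructed via the Macaulay resultant.

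There is, however, one concretely mislocated step. Your (a) proposes an elimination identity showing that the $v$-adic size of $\mathrm{Crit}(F)$ itself is governed two-sidedly by the weighted maximum $\ph_v(f)$. This fails already for $N=1$: with $f=[x_0^2+cx_1^2:x_1^2]$ the Jacobian form is $2x_0$, which carries no trace of $c$, so $\lambda_v(C_f)=0$ regardless of $|c|_v$. More generally, the coefficients $a_{i,I}$ with $I_N=d$ never appear in $J_f$ at all. The paper's remedy is to push forward once: the elimination identity (Lemma~\ref{lem:nullstellensatz}) is written for the coefficients $b_J$ of $f_*(C_f)$, not of $C_f$, and yields $\lambda_v(f_*(C_f))=d\,\ph_v(f)+O(1)$ (Lemma~\ref{lem:localnull}). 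Combining this with the escape lemma gives $G_{f,v}(C_f)=d^{-1}G_{f,v}(f_*(C_f))=d^{-1}\lambda_v(f_*(C_f))=\ph_v(f)+O(1)$ once $\ph_v(f)$ is large enough.

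The difficulty you correctly flag at the end---ruling out slowly-escaping or forward-invariant pieces of the critical locus---is exactly what makes the \emph{lower} bound in that elimination work, and the paper settles it not dynamically but by a single static geometric fact (Lemma~\ref{lem:crux}): the assignment $f\mapsto f_*(C_f)$ extends to a \emph{morphism} of weighted projective spaces $\Proj A\to\Proj B$, i.e., the $b_J$ all vanish only when $f$ is the pure power map. The proof is a short degree count in $\Div^*(\PP^N)$ comparing $\deg(C_f)=N(d-1)$ against $\sum_i\deg f^*(H_i)=Nd$. Once this is known, the Nullstellensatz furnishes the required identity and the rest is bookkeeping, exactly as you say. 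So your anticipated obstacle is real, but it is dispatched by one algebro-geometric lemma about the first iterate rather than by any control over higher iterates.
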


The arithmetic properties of PCF maps have been studied previously, although only in dimension $N=1$. Specifically, the author~\cite{pcfpoly} established a weaker version of Theorem~\ref{th:heights} for univariate polynomials, while Epstein~\cite{epstein}, Levy~\cite{alon:thurston}, and Silverman~\cite{js:thurston} have given various algebraic proofs of special cases of Theorem~\ref{th:rigidity} when $N=1$. In the realm of rational functions, Benedetto, the author, Jones, and Levy~\cite{pcfrat} have shown that univariate PCF maps reside in a set of bounded height, although a height relation as strong as that in Theorem~\ref{th:heights} seems out of reach. Meanwhile, the equidistribution results of Favre and Gauthier~\cite{favre} and the unlikely intersection results of Baker and DeMarco~\cite{bd, bd2} have made use of this sort of arithmetic information. It is hoped that Theorem~\ref{th:heights} will provide a first step toward generalizing work in these areas to several variables.

As noted, it follows quickly from Theorem~\ref{th:rigidity} that every PCF map $f\in\powm{N}{d}$ is defined over some finite extension of $\QQ$. The next result, an immediate corollary of Theorem~\ref{th:heights} in light of the Northcott property for $h_\mathrm{Weil}$, shows that the degrees of these extensions offer a finite stratification of the PCF maps in this class.

\begin{corollary}\label{cor:finiteness}
Let $N, D\geq 1$ and let $d\geq 2$. Then there are only finitely many  monic PCF polynomials $f:\PP^N\to\PP^N$ of degree $d$, defined over number fields of degree at most $D$, up to change of coordinates.
\end{corollary}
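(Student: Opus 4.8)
The plan is to obtain Corollary~\ref{cor:finiteness} as a formal consequence of Theorem~\ref{th:heights} together with the Northcott finiteness property, so that the argument is essentially one of bookkeeping.

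First I would translate the geometric statement into one about heights. A monic PCF polynomial $f\colon\PP^N\to\PP^N$ of degree $d$ defined over a number field $K$ has, by definition, a model
\[f=[x_0^d+x_Ng_0:\cdots:x_{N-1}^d+x_Ng_{N-1}:x_N^d],\]
with each $g_i$ a form of degree $d-1$ whose coefficients lie in $K$; the tuple of these coefficients is precisely a $K$-rational point of the parameter space $\pow{N}{d}$. Since $f$ is PCF we have $h_{\mathrm{crit}}(f)=0$, so Theorem~\ref{th:heights} yields $h_{\mathrm{Weil}}(f)=O(1)$, with the implied constant depending only on $N$ and $d$. Hence, as $K$ ranges over number fields with $[K:\QQ]\le D$, the corresponding points of $\pow{N}{d}$ form a set of bounded $h_{\mathrm{Weil}}$-height and of bounded degree over $\QQ$.

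Next I would invoke Northcott's theorem. Because $h_{\mathrm{Weil}}$ is the Weil height attached to an ample class on a projective (weighted-projective) completion of $\pow{N}{d}$, any set of points of this completion of bounded height and bounded degree is finite; concretely, composing with a suitable Veronese-type finite morphism from the weighted projective space to an ordinary $\PP^M$ reduces the claim to the classical Northcott theorem on $\PP^M$. It follows that only finitely many monic PCF polynomials of degree $d$, written in the normal form above, are defined over number fields of degree at most $D$. Finally, to account for the phrase ``up to change of coordinates,'' I would push this finite set forward along the structure map $\pow{N}{d}\to\powm{N}{d}$: since the points of $\powm{N}{d}(\overline{\QQ})$ are, by construction, the endomorphisms of this type taken up to projective change of coordinates, the image is a finite subset of $\powm{N}{d}(\overline{\QQ})$ that contains the class of every monic PCF polynomial defined over a number field of degree at most $D$, which is exactly the asserted finiteness.

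The only step that is not purely formal is the Northcott statement for $h_{\mathrm{Weil}}$: one must know that the chosen completion of $\pow{N}{d}$ is proper and that $h_{\mathrm{Weil}}$ agrees up to $O(1)$ with the Weil height of an ample divisor on it, so that finiteness for the weighted height can be deduced from the ordinary projective case. Granting this, which comes directly from the construction of $h_{\mathrm{Weil}}$, the corollary is immediate.
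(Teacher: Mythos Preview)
Your proposal is correct and matches the paper's argument essentially line for line: PCF gives $h_{\mathrm{crit}}(f)=0$, Theorem~\ref{th:heights} then bounds $h_{\mathrm{Weil}}(f)$, and Northcott for the weighted height (which the paper records as Lemma~\ref{lem:northcott}, justified via the elementary comparison $h_{\mathrm{Weil}}\le h\le d!\,h_{\mathrm{Weil}}$ with the ordinary projective height rather than a Veronese morphism) yields finitely many points in $\pow{N}{d}$, hence in $\powm{N}{d}$. The only cosmetic difference is your explicit mention of the push-forward along $\pow{N}{d}\to\powm{N}{d}$, which the paper leaves implicit.
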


The function $h_\mathrm{crit}$ in Theorem~\ref{th:heights} is constructed by first associating to each $f\in\pow{N}{d}$ a canonical height $\hat{h}_f$ defined on certain divisors of $\PP^N$, and then letting $h_\mathrm{crit}(f)=\hat{h}_f(C_f)$, where $C_f$ is the critical locus of $f$.  We note that heights of divisors and subvarieties have been considered before, most notably by Bost, Gillet, and Soul\'{e}~\cite{bgs}, by Philippon~\cite{philippon}, by Moriwaki~\cite{moriwaki}, and by Zhang~\cite{zhang} but these heights do not seem to lend themselves to the questions at hand. Our construction is less general, but tailored to the present setting, and sufficiently explicit to be used in computation; it would be of some interest to relate the height constructed here to previously constructed ones. We confirm in Theorem~\ref{th:canheightprops} below that the canonical height function $D\mapsto \hat{h}_f(D)$ has most of the properties that one would expect.

The results in this paper are entirely effective, and although it is not clear whether or not they allow one to effectively decide whether a given morphism is PCF, in practice they often do. In order to illustrate this, we give an explicit version of the above corollary in a special case.
\begin{theorem}\label{th:computations}
For any $a, b, c, d\in \QQ$, let $f_{a, b, c, d}:\PP^2_\QQ\to\PP^2_\QQ$ be the morphism extending
\[f_{a, b, c, d}(x, y)=(x^2+ax+by, y^2+cx+dy).\]
Then $f_{a, b, c, d}$ is PCF if and only if 
\begin{multline*}
(a, b, c, d)\in \Big\{
(0, 0, 0, 0), (0, 0, 0, -2), (-2, 0, 0, -2),\\  (0, 0, -1, 0), (0, 0, -2, 0),(0, -2, -2, 0)
\Big\}
\end{multline*}
or $f_{a, b, c, d}$ is related to one of these examples by a change of coordinates.
\end{theorem}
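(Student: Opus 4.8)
The plan is to convert Theorem~\ref{th:heights} into a finite search. Each tuple $(a,b,c,d)\in\QQ^4$ determines a point of $\pow{2}{2}(\QQ)$ (the remaining coefficients of the monic model $f_i=x_i^2+x_Ng_i$ vanishing), and there $h_{\mathrm{Weil}}(f_{a,b,c,d})$ agrees with the naive projective height $h([a:b:c:d:1])$ up to $O(1)$. If $f_{a,b,c,d}$ is PCF then $h_{\mathrm{crit}}(f_{a,b,c,d})=0$, so Theorem~\ref{th:heights} yields $h([a:b:c:d:1])\leq B$ for an explicit constant $B$, obtained by specializing the constants in the proofs of Theorems~\ref{th:rigidity} and~\ref{th:heights} to $N=d=2$. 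Moreover $h_{\mathrm{crit}}$ is a sum of non-negative local terms, each of which vanishes separately when $f$ is PCF; vanishing of the term at a finite place $v$ forces $a,b,c,d$ to be $v$-integral, so over $\QQ$ in fact $a,b,c,d\in\ZZ$. Together with the bound $B$ this reduces the problem to checking an explicitly enumerable finite list of integer tuples.

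To organize the check I would first record the ramification locus: the Jacobian determinant of the monic model of $f_{a,b,c,d}$ factors as $z$ times a quadratic form, so $C_{f_{a,b,c,d}}$ consists of the totally invariant hyperplane $\{z=0\}$ together with a conic $Q$, given in affine coordinates by $(2x+a)(2y+d)=bc$. Hence $f_{a,b,c,d}$ is PCF exactly when the sequence of curves $Q, f(Q), f^2(Q),\dots$ is finite (equivalently, eventually periodic). For the six asserted tuples this is checked directly: in five of them $bc=0$, so $Q$ degenerates to a pair of lines whose forward orbits are easily tracked by hand (for instance $\{y=0\}\mapsto\{y^2=xz\}\mapsto\{y=0\}$ for $(0,0,-1,0)$), and for $(0,0,0,-2)$, $(-2,0,0,-2)$ the map is moreover visibly the product of $x^2$, respectively $x^2-2x$, with $y^2-2y$, both one-dimensional PCF polynomials; for $(0,-2,-2,0)$ a short resultant computation shows $Q=\{xy=z^2\}$ maps onto a quartic $\Gamma$ which is then fixed by $f$. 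To eliminate every other candidate I would show the critical orbit is infinite, preferably by exhibiting a place $v$ at which the local escape term of $h_{\mathrm{crit}}$ is strictly positive (which already precludes PCF), and otherwise by iterating $Q$ far enough to see the curves $f^n(Q)$ proliferate in degree.

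The principal obstacle I anticipate is one of scale rather than substance: a crude unwinding of the constants behind Theorem~\ref{th:heights} is likely to make $B$, and hence the candidate list, uncomfortably large, and iterating plane curves by resultants is computationally expensive. I would address this not by treating the general theory as a black box but by redoing the local analysis in the case at hand, where the very rigid shape $f_i=x_i^2+x_Ng_i$ with $g_i$ linear should give place-by-place bounds far sharper than the general estimates---pinning down exactly which residue characteristics can occur and bounding the relevant archimedean absolute values tightly---so that the residual search is small enough to complete, if need be, by machine.

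Finally, there is a bookkeeping point about the phrase ``up to change of coordinates'': I would record explicitly the group acting on the family, generated by the transposition $(a,b,c,d)\mapsto(d,c,b,a)$ coming from interchanging the two affine coordinates together with the finitely many translations preserving the normal form (the pairs $(s,t)$ with $s^2+(a-1)s+bt=0$ and $t^2+(d-1)t+cs=0$, acting by $(a,b,c,d)\mapsto(a+2s,b,c,d+2t)$), and then verify both that the six listed tuples are pairwise inequivalent and that every PCF candidate surviving the search is equivalent to one of them.
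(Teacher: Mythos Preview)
Your plan is essentially the paper's own proof: reduce to a finite search via the height theory, obtain integrality from the non-archimedean local terms, bound the archimedean size, and then finish by explicit computation (the paper uses Sage and Maple, paring roughly $8\times 10^8$ candidates down to $127$ and then to $22$ PCF tuples falling into the six listed classes).

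One point deserves care. Your assertion that ``vanishing of the term at a finite place $v$ forces $a,b,c,d$ to be $v$-integral'' is automatic only for odd $p$: Lemma~\ref{lem:lyapunovlocal} gives $\lambda_{\mathrm{crit},v}(f)=\ph_v(f)$ with no error precisely when $v$ is not $p$-adic for $p\leq d=2$, so the $2$-adic case does not follow from the general theory. The paper handles $p=2$ exactly as your last paragraph anticipates, by direct inspection of the coefficients of $f_*(C_f)$, and in fact gets more than integrality: $b,c\in\ZZ_2$ while $a,d\in 2\ZZ_2$, so $a$ and $d$ are even. This parity constraint roughly quarters the search space and is worth recording. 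A minor notational slip: in the paper's conventions $C_f$ is the conic alone (the ramification divisor is $C_f+(d-1)H$), so $\{z=0\}$ is not part of $C_f$; this does not affect your argument since $H$ is totally invariant. Finally, the paper distinguishes the six classes not by computing the coordinate-change group you describe but simply by exhibiting their distinct critical portraits (Table~\ref{table:examples}), which is a cleaner invariant.
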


Note that the first three examples in Theorem~\ref{th:computations} are split, in the sense of consisting of two univariate polynomials acting independently, while the fourth and fifth are skew products in the sense of \cite{skewj}. The last example is a generalized Chebyshev map  \cite{uchimura, veselov}. See Table~\ref{table:examples} in Section~\ref{sec:comp} for more details on these examples.

The height bound in Theorem~\ref{th:heights} is established by decomposing both heights into local contributions, and establishing appropriate bounds at each place. These bounds work out in such a way as to offer interesting results for the local dynamics at each place.

Over $\CC$, our methods amount to defining an escape-rate function $G_f$ on a certain class of divisors on $\PP^N$, and considering the value $G_f(C_f)$, that is, the escape rate of the critical locus. Note that in the case $N=1$ and $d=2$, the relation $G_f(C_f)=0$ defines the Mandelbrot set in $\powm{1}{2}(\CC)= \CC$, and so our next theorem can be seen as a generalization of the compactness of this set.
\begin{theorem}\label{th:compactness}
For any $B\in\RR$, the subset of $\pow{N}{d}(\CC)$ on which $G_f(C_f)\leq B$ is compact.
\end{theorem}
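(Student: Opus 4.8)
The plan is to show that $f\mapsto G_f(C_f)$ is a continuous, non-negative, and \emph{proper} function on $\pow{N}{d}(\CC)$, where by proper I mean that $G_f(C_f)\to\infty$ as $\|f\|\to\infty$ within the parameter space. Since $\pow{N}{d}(\CC)$ sits as a closed subset of a finite-dimensional complex affine space (of coefficient data for monic polynomials), the set $\{f: G_f(C_f)\le B\}$ is then closed by continuity and bounded by properness, hence compact. So the theorem reduces to continuity and properness.

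For continuity, recall that $G_f(D)$ arises as a limit $\lim_{n\to\infty}d^{-n}\ell_n(f,D)$, where $\ell_n(f,D)$ records the ``size'' (a Mahler-measure or resultant type quantity) of the $n$th iterated push-forward $f^n_*D$. Each $\ell_n$ is continuous in $(f,D)$, and a telescoping estimate $|d^{-(n+1)}\ell_{n+1}(f,D)-d^{-n}\ell_n(f,D)|\le C\,d^{-n}$ holds with $C$ bounded uniformly for $f$ in a fixed compact set and $\deg D$ fixed; hence the convergence is locally uniform and $G_f(D)$ is jointly continuous. Since $f\mapsto C_f$ is algebraic, $f\mapsto G_f(C_f)$ is continuous. (Non-negativity of $G_f$ on effective divisors is among its basic properties; it is only needed here to rule out escape to $-\infty$.)

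Properness is the heart of the matter. In the monic normal form the Jacobian of $f$ is block triangular with a trivial $x_N$-block, so the ramification divisor splits as $C_f=(d-1)H+C_f'$, where $H=\{x_N=0\}$ is the totally invariant hyperplane and $C_f'=\{\det(\operatorname{diag}(d x_0^{d-1},\dots,d x_{N-1}^{d-1})+(\partial g_i/\partial x_j))=0\}$ is a residual divisor of degree $N(d-1)$. As $H$ is totally invariant, $G_f(H)=0$, and by additivity of $G_f$ in the divisor variable $G_f(C_f)=G_f(C_f')$. It therefore suffices to show $G_f(C_f')\ge\log^+\|f\|-O(1)$, which is precisely the archimedean local estimate underlying Theorem~\ref{th:heights}. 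Working in the chart $x_N=1$, the coefficients of a defining form for $C_f'$ are polynomials in those of $f$, and a resultant computation gives that the size of $f^n_*C_f'$ is at least $d^n(\log^+\|f\|-O(1))$ for all large $n$: when $\|f\|$ is large the off-diagonal Jacobian entries, of size comparable to $\|f\|$, balance the diagonal entries $d x_i^{d-1}$ only once $\|x\|$ is comparably large, so $C_f'$ contains points of norm at least a fixed positive power of $\|f\|$, whose forward orbits escape at rate $\gtrsim\log\|f\|$, and the size of a divisor detects its largest point. Dividing by $d^n$ and letting $n\to\infty$ yields the bound; the matching inequality $G_f(C_f')\le\log^+\|f\|+O(1)$ is routine from $\|f(x)\|\le C\|f\|\,\|x\|^d$ on this chart.

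The main obstacle is the lower bound $G_f(C_f')\ge\log^+\|f\|-O(1)$: one must exhibit, uniformly over all large $f\in\pow{N}{d}(\CC)$, a genuinely escaping piece of the critical locus, and then argue that the particular divisor-size function defining $G_f$ — built from a maximum or resultant rather than an average — dominates its value on that piece. This is where the explicit monic structure of $\pow{N}{d}$, in particular the shape of the matrix $\operatorname{diag}(d x_i^{d-1})+(\partial g_i/\partial x_j)$ cutting out $C_f'$, is used to locate the escaping critical points; the remaining ingredients are standard estimates comparing $G_f$ with $\log^+\|\cdot\|$ up to $O(1)$, of the sort already needed for Theorem~\ref{th:heights}.
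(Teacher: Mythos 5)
Your overall architecture (show $f\mapsto G_{f,\infty}(C_f)$ is continuous and satisfies a two-sided comparison with $\ph_\infty(f)$, then conclude closed $+$ bounded $=$ compact) is exactly the paper's, and your continuity sketch is essentially Lemma~\ref{lem:gfcts}. The gap is in the step you yourself flag as the heart of the matter: the lower bound $G_{f,\infty}(C_f)\geq \ph_\infty(f)-O(1)$. Your justification is that for large $\|f\|$ the critical hypersurface contains points of norm a positive power of $\|f\|$, that such points escape at rate at least $\log\|f\|$, and that ``the size of a divisor detects its largest point.'' That last assertion is false for the divisor-size function actually in play: $\lambda_\infty(D)$ is, by Lemma~\ref{lem:lambdaandcoeffscomplex}, a coefficient-size (Newton-polygon) quantity, equivalently a supremum over points of $D$ lying \emph{above the unit torus} in $(x_0,\dots,x_{N-1})$; it does not see far-away points of $D$ at all. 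For instance a coordinate hyperplane $\{x_i=0\}$, $i<N$, contains points of arbitrarily large norm and has $\lambda_\infty=0$, and any $f$-invariant divisor has $G_{f,\infty}=0$ even though its individual points escape to infinity. So for $N\geq 2$ the existence of large, escaping critical points is automatic (an affine hypersurface is unbounded) and implies nothing about $G_{f,\infty}(C_f)$; and for $N=1$ the premise itself fails, e.g.\ $f(x)=x^2+c$ has critical point $0$ no matter how large $c$ is --- the escape is detected only through the image $f_*(C_f)$, whose defining form has $c$ as a coefficient, not through the location of $C_f$.

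What is actually needed, and what your ``resultant computation'' silently assumes, is a reason why the coefficients of $\Res(J_f,f)$ (the form cutting out $f_*(C_f)$) must be large, in the weighted sense, whenever $\ph_\infty(f)$ is large; a priori $f_*(C_f)$ could remain supported near the coordinate hyperplanes (all relevant coefficients small) as $\|f\|\to\infty$, killing the lower bound. The paper supplies this via elimination theory: Lemma~\ref{lem:crux} shows that the only $f\in\pow{N}{d}$ for which $f_*(C_f)$ is supported on the coordinate hyperplanes is the power map, i.e.\ the coefficients $b_J$ have no common zero away from the vertex of the weighted cone; the Nullstellensatz (Lemma~\ref{lem:nullstellensatz}) then yields identities $a_{i,I}^{e/I_N}=\sum_J b_J^{(d-1)!/J_N}g_{i,I,J}$, which give $\lambda_\infty(f_*(C_f))\geq d\,\ph_\infty(f)-O(1)$ (Lemma~\ref{lem:complexcritlowerbound}), and the dynamical comparison lemmas for $G_{f,\infty}$ convert this into the properness you want. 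Without this non-vanishing/Nullstellensatz input, or some substitute for it, your lower bound is unsupported; with it, the rest of your argument goes through and coincides with the paper's proof.
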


In the case of non-archimedean local fields, recall that a morphism $f:\PP^N\to\PP^N$ defined over a local field $K$ has \emph{good reduction} if it extends to a scheme morphism $f:\PP^N_\Ocal\to\PP^N_\Ocal$ over $\Spec(\Ocal)$, where $\Ocal\subseteq K$ is the ring of integral elements. We will say that $f$ has \emph{potentially good reduction} if some change of coordinates over $\overline{K}$ yields good reduction.
\begin{theorem}\label{th:goodred}
Let $K$ be a local field with residue characteristic $0$ or $p>d$. If $f:\PP^N\to\PP^N$ is a monic PCF polynomial of degree $d$ over $K$, then $f$ has potentially good reduction.
\end{theorem}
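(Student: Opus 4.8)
The plan is to deduce Theorem~\ref{th:goodred} from Theorem~\ref{th:heights} by a localization argument. First I would observe that ``potentially good reduction'' over a non-archimedean local field $K$ is detected by the vanishing of a single local invariant: the local contribution to $h_{\mathrm{crit}}$ at the place of $K$. Concretely, the construction of $h_{\mathrm{crit}}$ via the canonical height $\hat h_f$ of divisors decomposes, over a number field, as a sum of local escape rates $\lambda_{f,v}(C_f)$ over all places $v$, each $\lambda_{f,v}\geq 0$, and $h_{\mathrm{crit}}(f)=0$ precisely when every local term vanishes. I would isolate the non-archimedean local statement: for a monic polynomial $f$ over a local field $K$ with residue characteristic $0$ or $p>d$, one has $\lambda_{f,K}(C_f)=0$ if and only if $f$ has potentially good reduction. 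The forward direction (good reduction forces the local escape rate of the critical locus to vanish) is essentially formal, since good reduction means the Green's function / escape-rate machinery is the trivial one coming from the reduction map; the substance is the converse.

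The key steps, in order, are as follows. Step one: set up the local escape-rate function $\lambda_{f,K}$ on divisors from the same local analysis that feeds into $h_{\mathrm{crit}}$, and record that for a monic polynomial the totally invariant hyperplane $H=\{x_N=0\}$ and its complement $\AA^N$ are respected, so $\lambda_{f,K}$ restricted to divisors supported in $\AA^N$ is controlled by the filled-Julia-set dynamics of the affine map. Step two: show $\lambda_{f,K}(C_f)=0$ implies that the critical orbit is bounded in $\AA^N(\overline K)$ with respect to the spherical/Gauss metric, i.e.\ the post-critical set has integral (bounded) coordinates after a suitable coordinate change. Step three: run a reduction-theory argument in the spirit of the $N=1$ theory (cf.\ the potentially-good-reduction criteria of Benedetto and of Epstein): use that a monic polynomial with critical orbit of good reduction and with the power-map behavior on $H$ persisting under reduction must itself reduce to a separable (hence good-reduction) monic polynomial of degree $d$, using $p>d$ to guarantee the reduced critical locus still has the expected degree and that separability is not lost. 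Step four: globalize — given $f$ PCF over $K$ as in the statement, spread $f$ out over a number field $F$ with a place $v\mid K$ (using that PCF monic polynomials are defined over $\overline{\QQ}$, which follows from Theorem~\ref{th:rigidity} as noted in the introduction), apply Theorem~\ref{th:heights} to conclude $h_{\mathrm{crit}}(f)=0$ hence $\lambda_{f,w}(C_f)=0$ for all $w$, in particular for $w=v$, and then invoke the local criterion of Step three to get potentially good reduction at $v$, which transfers back to $K$.

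The main obstacle I expect is Step three: translating ``the critical orbit is $v$-adically bounded'' into ``$f$ has potentially good reduction.'' In dimension one this is classical but already delicate, and in several variables one must argue that boundedness of the post-critical \emph{hypersurface} (not just finitely many points) forces a coordinate change producing an honest scheme morphism $\PP^N_{\Ocal}\to\PP^N_{\Ocal}$; the resultant of the defining forms must become a unit. I would handle this by exploiting the monic normal form $f_i=x_i^d+x_Ng_i$ with $f_N=x_N^d$: after translating the center of mass of the post-critical set to the origin and rescaling, the coefficients of the $g_i$ lie in $\Ocal$, and one computes that the resultant is then a $v$-adic unit because its reduction is the resultant of $[x_0^d+\bar x_N\bar g_0:\cdots:x_N^d]$, which is nonzero exactly when the reduced map is still a morphism — and failure of this would produce an unbounded critical orbit, contradicting Step two. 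The hypothesis $p>d$ enters to ensure the power map $x_N\mapsto x_N^d$ remains separable mod $v$ and that no critical multiplicities collapse. A secondary, more bookkeeping-level obstacle is making precise the decomposition $h_{\mathrm{crit}}=\sum_w \lambda_{f,w}$ with the ``good reduction at $w$ $\iff$ $\lambda_{f,w}(C_f)=0$'' dictionary, but this should follow directly from the construction of $h_{\mathrm{crit}}$ used to prove Theorem~\ref{th:heights}.
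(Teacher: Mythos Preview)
Your proposal has the right instinct---the result is purely local, coming from the vanishing of $\lambda_{\mathrm{crit},v}(f)$---but it misroutes the argument through the global Theorem~\ref{th:heights} and then misidentifies what is actually needed to finish. Two concrete problems.

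First, the globalization in Step~4 is both unnecessary and insufficient. Theorem~\ref{th:heights} is itself proved by summing the local identities $\lambda_{\mathrm{crit},v}(f)=\ph_v(f)+O(1)$, so invoking it to recover one of those local identities is a detour. Worse, the spreading-out to a number field does not cover the statement: the theorem is asserted for any local field with residue characteristic $0$ or $p>d$, which includes equal-characteristic fields such as $\FF_p((t))$ with $p>d$, where there is no number field in sight. The paper's proof never leaves the local field.

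Second, and more seriously, your Step~3 is where the real content lies, and the sketch you give (``translate the center of mass of the post-critical set to the origin and rescale'') does not work: the post-critical set here is a union of hypersurfaces, not a finite set of points, and the conjugations preserving the monic normal form are in any case a finite group, so there is no continuous rescaling available. What the paper does instead is prove an \emph{exact} local identity
\[
\lambda_{\mathrm{crit},v}(f)=\ph_v(f)
\]
whenever $v$ is not $p$-adic for some $p\leq d$ (Lemma~\ref{lem:lyapunovlocal}, with vanishing error term). This identity is the whole point, and it comes from an algebraic fact you have not used: the map $f\mapsto f_*(C_f)$ is a \emph{morphism} between weighted projective spaces over $\Spec\ZZ[\tfrac{1}{d!}]$ (Lemma~\ref{lem:crux}), so a Nullstellensatz argument (Lemma~\ref{lem:nullstellensatz}) gives integral relations expressing powers of the $a_{i,I}$ in terms of the coefficients of $f_*(C_f)$. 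Once $\lambda_{\mathrm{crit},v}(f)=0$ forces $\ph_v(f)=0$, every $a_{i,I}$ is $v$-integral; since the leading monomials $x_i^d$ already have unit coefficients, $f$ has good reduction in the given normal form, with no further coordinate change needed. Your route via ``bounded critical orbit $\Rightarrow$ good reduction'' tries to reprove this consequence without the algebraic input that makes it immediate.
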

If $f:\PP^N_K\to\PP^N_K$ and $f(P)=P$, then $f$ induces an action on the tangent space $d_Pf:\Theta_{\PP^N, P}\to\Theta_{\PP^N, P}$. We say that the point $P$ is \emph{strongly non-repelling} if every eigenvalue $\lambda$ of this action satisfies $|\lambda|\leq1$. An $n$-periodic point is strongly non-repelling if it is a strongly non-repelling fixed point of $f^n$. The following is related to Theorem~7.1 of \cite{pcfrat} in the case $N=1$.
\begin{corollary}\label{cor:repelling}
Let $K$ be a local field with residue characteristic $0$ or $p>d$. If $f:\PP^N\to\PP^N$ is a monic PCF polynomial of degree $d$ over $K$, then all periodic points of $f$ are strongly non-repelling.
\end{corollary}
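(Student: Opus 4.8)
The plan is to deduce the corollary directly from Theorem~\ref{th:goodred} together with the classical fact that a morphism with good reduction has integral multipliers at its periodic points. First I would record that the property of being strongly non-repelling is invariant under change of coordinates: if $\psi=\phi^{-1}\circ f\circ\phi$ for some $\phi\in\PGL_{N+1}(\overline{K})$ and $P$ is an $n$-periodic point of $\psi$, then $\phi(P)$ is an $n$-periodic point of $f$, and the chain rule gives $d_P\psi^n=(d_P\phi)^{-1}\circ d_{\phi(P)}f^n\circ d_P\phi$, so the eigenvalues of $d_P\psi^n$ and of $d_{\phi(P)}f^n$ coincide. By Theorem~\ref{th:goodred}, $f$ is conjugate over $\overline{K}$ to a map with good reduction; after replacing $K$ by a finite extension $L$ over which both the conjugating element and the good model are defined, we may therefore assume that $f$ itself extends to a scheme morphism $f:\PP^N_{\Ocal}\to\PP^N_{\Ocal}$ over $\Spec(\Ocal)$, where now $\Ocal$ denotes the ring of integral elements of $L$.

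Next, fix an $n$-periodic point $P$ of $f$. Since $f^n$ is again a morphism over $\Spec(\Ocal)$, replacing $f$ by $f^n$ reduces us to the case $f(P)=P$, and after one more finite extension we may assume $P\in\PP^N(L)$. As $\PP^N_{\Ocal}$ is proper over $\Spec(\Ocal)$, the point $P$ extends uniquely to a section $\Spec(\Ocal)\to\PP^N_{\Ocal}$; in particular its reduction lies in some standard affine chart $U\cong\AA^N_{\Ocal}=\Spec(\Ocal[x_1,\dots,x_N])$, and $P$ has coordinates in $\Ocal$ in this chart. On the open set $f^{-1}(U)\cap U$, which contains $P$, the morphism $f$ is given by regular functions lying in a localization of $\Ocal[x_1,\dots,x_N]$, so the Jacobian matrix $\bigl(\partial f_i/\partial x_j\bigr)$, evaluated along the section $P$, has all entries in $\Ocal$; in the coordinates $x_1,\dots,x_N$ on both source and target (which we may take to agree since $f(P)=P$ and the reduction of $P$ lies in $U$) this matrix represents $d_Pf$. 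Hence the characteristic polynomial of $d_Pf$ is monic with coefficients in $\Ocal$, so every eigenvalue $\lambda$ is integral over $\Ocal$, and with respect to the unique extension to $\overline{K}$ of the absolute value of $K$ this forces $|\lambda|\le 1$. Undoing the successive reductions, every $n$-periodic point of the original $f$ is strongly non-repelling.

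I do not expect a genuine obstacle here: essentially all of the substance lies in Theorem~\ref{th:goodred}, and what remains is the bookkeeping of field extensions, the conjugation-invariance of ``strongly non-repelling,'' and the standard verification that the reduced Jacobian has integral entries. The one point worth flagging is that the residue-characteristic hypothesis is used only to invoke Theorem~\ref{th:goodred}; the implication ``potentially good reduction $\Rightarrow$ all periodic points strongly non-repelling'' is valid in arbitrary residue characteristic.
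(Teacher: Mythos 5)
Your proposal is correct and follows essentially the same route as the paper: invoke Theorem~\ref{th:goodred} to reduce to a model with good reduction, observe that the Jacobian at an integral fixed point has entries in the ring of integers so its eigenvalues are integral and hence of absolute value at most $1$, and apply this to $f^n$ for $n$-periodic points. The only difference is bookkeeping: the paper works in the explicit affine model and translates fixed points to the origin, while you phrase the integrality via sections of $\PP^N_{\Ocal}$ and standard charts, together with the (correct) observation that the eigenvalue condition is conjugation-invariant.
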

A version of this corollary can also be established over $\CC$, but the statement is much weaker, specifically that the size of the eigenvalues of the action on the tangent space at points of period $n$ for PCF maps are bounded above by a constant depending just on $d$, $N$, and $n$. In other words, the periodic points of PCF monic polynomials are ``not too strongly repelling,'' in a uniform sense.

We note that our focus on monic polynomial endomorphisms falls into a larger perspective which has been studied before, for instance by Bedford and Jonsson~\cite{bj}. In general, a \emph{regular polynomial endomorphism} $f:\PP^N\to\PP^N$ is a morphism with a totally invariant hyperplane $H$, and we denote by $\RPE{N}{d}$ the space of such maps of degree $d\geq 2$ with a marked invariant hyperplane (a subvariety of $\moduli{N}{d}$ but for the extra marked structure). Since $H\cong \PP^{N-1}$, there is a natural projection $\pi:\RPE{N}{d}\to\moduli{N-1}{d}$, given by restriction to the hyperplane. It is easy to see that PCF maps are sent to PCF maps, and that the fibre above any PCF map contains at least one PCF map. So, one approach to studying the PCF maps in $\RPE{N}{d}$ is to study the fibres over PCF points in $\moduli{N-1}{d}$. Theorem~\ref{th:rigidity} states that there are no families of PCF maps in one particular fibre of this fibration, namely the fibre above the $d$th power map on $\PP^{N-1}$. Although this fibre is certainly a very special one, it seems reasonable to speculate that there are no fibral families of PCF maps in general, in other words, that PCF maps in $\RPE{N}{d}$ admit a sort of relative rigidty over the base $\moduli{N-1}{d}$.
\begin{conjecture}\label{conj:relativerigidity}
Let $X/\CC$ be an affine curve, let $\sigma:X\to\RPE{N}{d}$ land entirely in the PCF locus, and suppose that $\pi\circ\sigma:X\to\moduli{N-1}{d}$ is constant. Then $\sigma$ is constant.
\end{conjecture}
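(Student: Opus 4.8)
The plan is to reduce the conjecture to a relative form of Theorem~\ref{th:goodred} and then to exploit the fact that $\RPE{N}{d}$ admits no complete curves: the forgetful morphism $\RPE{N}{d}\to\moduli{N}{d}$ has finite fibres (a given morphism has only finitely many totally invariant hyperplanes), and $\moduli{N}{d}$ is affine, being the GIT quotient of the affine variety $\homspace{N}{d}$ (the complement of the resultant hypersurface in a projective space) by the reductive group $\PGL$, so a proper subvariety of $\RPE{N}{d}$ maps to a finite set and is itself finite. Suppose then that $\sigma\colon X\to\RPE{N}{d}$ is as in the statement but nonconstant. Since $\pi\circ\sigma$ is constant, equal to the class of some $g_0\in\homspace{N-1}{d}(\CC)$, a finite base change $X'\to X$ followed by a coordinate change over the function field realizes the generic fibre as a regular polynomial endomorphism $f\colon\PP^N\to\PP^N$ of degree $d$ over $K=\CC(X')$ which is PCF and satisfies $f|_H=g_0$ identically; and since $g_0$ is defined over $\CC$, it has good reduction at every place of $K$.

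The reduction is completed by the following \emph{relative good-reduction statement}, which is the real content: let $L$ be complete with respect to a discrete valuation of residue characteristic $0$, let $g$ be a PCF endomorphism of $\PP^{N-1}$ of degree $d$ over $L$ with good reduction, and let $f$ be a regular polynomial endomorphism of $\PP^N$ of degree $d$ over $L$ with $f|_H=g$; then $f$ PCF implies $f$ has potentially good reduction. Granting this, $f$ has potentially good reduction at each of the finitely many places of $K$ where its reduction could degenerate, so after a further finite base change $f$ extends to a family of degree-$d$ morphisms over the complete curve $\overline{X''}$ (the marked hyperplane, being the standard one in our coordinates, extends trivially), that is, to a morphism $\overline{X''}\to\RPE{N}{d}$. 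Such a morphism is constant, contradicting the assumption that $\sigma$ is nonconstant.

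To prove the relative good-reduction statement I would imitate the argument behind Theorem~\ref{th:goodred}. Attach to $f$ the canonical height $\hat h_f$ on divisors of $\PP^N$, so that $\hat h_f(C_f)=0$ because $f$ is PCF, and decompose this as a sum $\sum_v n_v\lambda_{f,v}(C_f)$ of non-negative local contributions, forcing $\lambda_{f,v}(C_f)=0$ at the place $v$ in question. As in the monic case one expects the critical locus to split as $C_f=(d-1)H+C_f^\circ$, with $C_f^\circ$ the ``horizontal'' part and $C_f^\circ|_H$ equal to the ramification divisor $C_g$ of $g$; since $H$ is totally invariant its canonical height vanishes, so $\lambda_{f,v}(C_f)=0$ is really a vanishing statement for the local escape rate of $C_f^\circ$ transverse to $H$. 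The goal is then to show that if $f$ had bad reduction at $v$ while $f|_H$ retained good reduction, the reduction $\tilde f$ would acquire a nonempty indeterminacy locus inside $\AA^N_{k_v}$ into which the iterates of $C_f^\circ$ are drawn at a positive rate, contradicting $\lambda_{f,v}(C_f^\circ)=0$.

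The main obstacle is precisely this local analysis. In the power-map fibre covered by Theorem~\ref{th:goodred} the internal dynamics along $H$ is maximally rigid, the post-critical locus of the reduction on $\tilde H$ is a union of coordinate hyperplanes, and the indeterminacy locus of a bad reduction is a coordinate subspace, so the transverse escape estimate can be made essentially explicit; for a general good-reduction PCF map $g$ the post-critical geometry of $\tilde g$ on $\tilde H$ can be an arbitrary forward-invariant union of hypersurfaces, and one must argue that the transverse escape of $C_f^\circ$ is not absorbed or cancelled by this internal structure. A naive dimension count does not suffice: the indeterminacy locus of $\tilde f$ may have dimension as low as $N-2$, so already for $N=2$ there is no automatic incidence with $C_f^\circ$, and one must instead exploit the skew structure $f_N=x_N^d$ together with the compatibility $C_f^\circ|_H\sim C_g$ to locate the escaping part of the critical orbit. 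I expect a workable treatment to rest on a notion of local escape rate for divisors that is genuinely \emph{relative} to $H$, together with an analysis of how the normally superattracting behaviour along $H$ interacts with the bounded, post-critically finite dynamics of $g$. A secondary point, harmless here because $g_0$ is constant but essential for the broader fibration picture, is that for a non-isotrivial base one would additionally need PCF endomorphisms of $\PP^{N-1}$ to have potentially good reduction, which is itself open beyond the monic-polynomial case.
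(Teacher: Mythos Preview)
The statement you are addressing is a \emph{conjecture} in the paper, not a theorem; the paper offers no proof, only the remark that ``some of the arguments in this paper seem to use properties distinct to the fibre above the power map, but others do not,'' and points to regularity of Green's functions on each fibre (citing \cite{pi:specN}) as a possible ingredient. So there is no proof in the paper to compare your attempt against, and your own write-up is explicitly framed as a strategy with an identified obstacle rather than a proof. On both counts, then, there is nothing to adjudicate as correct or incorrect.

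That said, a few comments on the strategy. Your reduction to a ``relative good-reduction'' statement is natural and is essentially a reformulation of what the paper gestures at: the paper's approach to the power-map fibre is exactly to show $\lambda_{\mathrm{crit},v}(f)=\ph_v(f)$ at every non-archimedean place of a function field, and your proposed statement asks for the analogous vanishing over a general PCF base $g$. The affineness of $\moduli{N}{d}$ and the finiteness of totally invariant hyperplanes are both correct and do rule out complete curves in $\RPE{N}{d}$, so the global part of your reduction is sound. One technical point you gloss over: ``potentially good reduction'' gives a coordinate change over each completion, but you need one that \emph{preserves the marked hyperplane} $H$ in order to land back in $\RPE{N}{d}$; this is not automatic, though it is plausible given that $f|_H=g_0$ already has good reduction. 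Your honest assessment of the main obstacle --- that for general $g$ the transverse escape estimate cannot be made explicit as in Lemma~\ref{lem:crux} --- is exactly right, and is the reason the paper leaves this as a conjecture. The suggestion that one needs a local escape rate for divisors which is genuinely relative to $H$ is consonant with the paper's pointer to \cite{pi:specN}.
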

 If this conjecture were true, it would then follow from Thurston's rigidity theorem for $\moduli{1}{d}$ that the only families of regular polynomial endomorphisms $f:\PP^2\to\PP^2$ are those that are ``Latt\`{e}s at infinity.''
Some of the arguments in this paper seem to use properties distinct to the fibre above the power map, but others do not. One key ingredient, for instance, is a regularity of Green's functions as $f\in \powm{N}{d}$ varies, and one of the the main ingredients in \cite{pi:specN} is the observation that a similar regularity holds on any given fibre of this projection.


This paper is organized as follows.
In Section~\ref{sec:spaces} we define the spaces $\pow{N}{d}$ and $\powm{N}{d}$, as well as some associated parameter spaces, more carefully, and study their geometric properties. In order to keep considerations mostly independent of characteristic, we work in the category of schemes over $\ZZ[\frac{1}{2}, ..., \frac{1}{d}]$, rather than varieties over $\CC$.
Section~\ref{sec:local} is devoted to the construction of a Green's-like function on divisors, relative to a given monic polynomial endomorphism, and from its properties we prove Theorems~\ref{th:rigidity} and Theorem~\ref{th:goodred}.  Here we work entirely over algebraically closed fields complete with respect to some non-archimedean absolute value, while in Section~\ref{sec:complex} we work out similar (but messier) results over $\CC$ and prove Theorem~\ref{th:compactness}.
In Section~\ref{sec:global} we turn our attention to global fields, and prove Theorem~\ref{th:heights}.
Since the results in this paper are more-or-less computationally effective, Section~\ref{sec:comp} gives explicit computational consideration to the PCF monic quadratic polynomials $f:\PP^2\to\PP^2$ defined over $\QQ$.


\section{The space $\pow{N}{d}$}\label{sec:spaces}

Fixing $d\geq 2$, all geometric objects in the section can be considered at the level of schemes over $\Spec(R)$, where $R=\ZZ[\frac{1}{2}, ..., \frac{1}{d}]$, although the reader loses little intuition in conceiving of them as varieties over $\CC$.
We recall some notation of (see~\cite{barbados}). Morphisms $\PP^N\to\PP^N$ of degree $d\geq 2$ are naturally parametrized by their coefficients, and we denote this parameter space by $\homspace{N}{d}\subseteq \PP^{\binom{N+d}{d}(N+1)-1}$. There is a natural action of $\PGL_{N+1}$ on this space, corresponding to conjugation by a change of variables on $\PP^N$, and the quotient of $\homspace{N}{d}$ by this action is denoted by $\moduli{N}{d}$.

By a \emph{multi-index} $I$ of dimension $N$ we simply mean an $(N+1)$-tuple $I=(I_0, ..., I_N)$ of non-negative integers, and we set $|I|=I_0+\cdots +I_N$. If $\mathbf{x}=(x_0, ..., x_N)$ is a tuple of variables, then we will write $\mathbf{x}^I$ for the monomial $x_0^{I_0}\cdots x_N^{I_N}$. We will denote by $\Ind(N, d)$ the set of all $N$-dimensional multi-indices $I$ satisfying $|I|=d$, and write $\Ind^*(N, d)$ for those indices with $0< I_N<d$. 
 Note that, by standard combinatorial identities,
\[\#\Ind^*(N, d)=\#\Ind(N, d)-\#\Ind(N-1, d)-1=\binom{N+d}{d}-\binom{N-1+d}{d} - 1.\]
For each $0\leq i<N$ and each $I\in \Ind^*(N, d)$ we will introduce an indeterminate $a_{i, I}$, and set
\[A=R[a_{i, I}:0\leq i< N, I\in\Ind^*(N, d)].\]
We will write $\pow{N}{d}=\Spec(A)$, as a scheme over $R$. We will also view $A$ as a graded ring, with a grading defined by letting $a_{i, I}$ have weight $I_N$, and we will consider below the (weighted) projective space $\Proj(A)$ with respect to this gradation.

For any point $P=(a_{i, I})\in \pow{N}{d}$, we define a morphism $f_P:\PP^N\to\PP^N$ by
\begin{equation}\label{eq:genericpoly}
f_P(\mathbf{x})=\left[x_0^d+\sum_{I\in\Ind^*(N, d)}a_{0, I}\mathbf{x}^I:\cdots :x_{N-1}^d+\sum_{I\in\Ind^*(N, d)}a_{N-1, I}\mathbf{x}^I:x_N^d\right],
\end{equation}
giving an embedding $\pow{N}{d}\to\homspace{N}{d}$ over $\Spec(R)$. Note that, over an algebraically closed field, every PCF monic polynomial (as defined in the introduction) is equivalent to a map of the above form, up to a change of coordinates. In other words, if we let $\powm{N}{d}\subseteq \moduli{N}{d}$ denote the image of $\pow{N}{d}$ under the quotient by $\PGL_{N+1}$, then $\powm{N}{d}$ corresponds exactly to the set of PCF monic polynomials.
\begin{lemma}
The map $\pow{N}{d}\to\powm{N}{d}$ is finite.
\end{lemma}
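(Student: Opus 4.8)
The plan is to show that the map $\pow{N}{d}\to\powm{N}{d}$ is quasi-finite and proper; since both spaces are of finite type over $R$, this gives finiteness. The key point is to understand the fibres: two points $P, P'\in\pow{N}{d}$ have the same image in $\powm{N}{d}$ precisely when $f_P$ and $f_{P'}$ are conjugate by some $\gamma\in\PGL_{N+1}$. So I would first pin down which $\gamma$ can conjugate one monic polynomial of the normal form \eqref{eq:genericpoly} to another.

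First I would analyze the stabilizer and the conjugating transformations. If $\gamma f_P\gamma^{-1}=f_{P'}$ with both maps in the form \eqref{eq:genericpoly}, then $\gamma$ must preserve the structure that characterizes this normal form: the totally invariant hyperplane $H=\{x_N=0\}$ together with the requirement that the restriction of the map to $H$ is literally the $d$th power map $[x_0^d:\cdots:x_{N-1}^d]$ in the given coordinates, and that the ``leading part'' off $H$ is the diagonal power map $x_i\mapsto x_i^d$. Preserving $H$ forces $\gamma$ to be upper-triangular in the appropriate block sense (it fixes $H$ setwise, hence acts on $\PP^{N-1}$); preserving the power map on $H$ forces the induced automorphism of $\PP^{N-1}$ to normalize $[x_0^d:\cdots:x_{N-1}^d]$, which — because $d\geq 2$ and we may work over an algebraically closed field where the power map has a well-understood finite symmetry group — restricts $\gamma|_H$ to a composition of a coordinate permutation and a diagonal scaling by $d$-th roots of unity (more precisely, $(d-1)$-st roots of unity after projectivizing). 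Tracking how such a $\gamma$ must act on the $x_N$-coordinate, and matching the $x_i^d$ leading terms of $f_{P'}$, pins $\gamma$ down to a finite group (the semidirect product of $S_N$ by a finite torus of roots of unity, possibly twisted by scaling of $x_N$), independent of $P$. Consequently each fibre of $\pow{N}{d}\to\powm{N}{d}$ is finite of bounded cardinality, i.e. the map is quasi-finite.

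Next I would establish properness, for which by the valuative criterion it suffices to show: given a DVR with fraction field $L$ and a point of $\pow{N}{d}(L)$ whose image in $\powm{N}{d}$ extends over the DVR, the point itself extends (after perhaps a finite extension, which is harmless). Equivalently, if $f_{P}$ for $P\in\pow{N}{d}(L)$ is conjugate over $L$ to a family with good reduction, then $P$ itself has bounded coordinates. The mechanism is that the normal form \eqref{eq:genericpoly} is rigid enough that the conjugating element $\gamma$ witnessing the extension must itself be integral: using the finite list of possible $\gamma$ from the previous paragraph (roots of unity and permutations, which are automatically integral) together with an overall scaling $\lambda$ on $x_N$, one checks that good reduction of the conjugate, combined with the fixed leading terms $x_i^d$ and $x_N^d$, forces $\lambda$ and hence the coefficients $a_{i,I}$ to be integral at the given place. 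This yields the valuative criterion and hence properness; combined with quasi-finiteness, the map is finite.

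The main obstacle I expect is the bookkeeping in the first step — precisely determining the group of transformations preserving the normal form \eqref{eq:genericpoly}, in particular confirming that the apparent freedom in how $\gamma$ acts between the hyperplane $H$ and the complementary coordinate $x_N$ is in fact finite (no one-parameter family of shears survives), and that this holds uniformly over $\Spec(R)$ so that no characteristic $p>d$ subtlety (extra automorphisms of the power map) creeps in. Once the conjugating group is identified as a fixed finite group scheme over $R$, both quasi-finiteness and the integrality needed for properness follow without difficulty.
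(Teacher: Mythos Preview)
Your overall strategy for quasi-finiteness is on target, but the execution has two gaps when compared with the paper's argument.

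First, you assume that a conjugating $\gamma$ must preserve $H$. This is not automatic: $\gamma$ sends the totally invariant hyperplane $H$ of $f_P$ to a totally invariant hyperplane of $f_{P'}$, which need not be $H$ itself (the power map, for instance, has all $N+1$ coordinate hyperplanes totally invariant). The paper handles this separately by observing that $f_{P'}$ has at most $N+1$ invariant hyperplanes, so $\gamma$ lies in one of finitely many cosets of the $H$-preserving subgroup.

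Second, and more seriously, your mechanism for controlling the translation part of $\gamma$ does not work. Matching the leading terms $x_i^d$ imposes no constraint on affine translations: if $\gamma$ shifts $x\mapsto x+b$, then $\gamma^{-1}f_P\gamma$ still has top term $x_i^d$ regardless of $b$. You correctly flag this one-parameter family of shears as the main obstacle, but the resolution is not via leading terms. The paper's key observation is that the normal form~\eqref{eq:genericpoly} forces the origin $\origin=[0:\cdots:0:1]$ to be a fixed point (since $I_N<d$ excludes the monomial $x_N^d$ from each $f_i$ with $i<N$), and conjugation carries fixed points to fixed points, so $\gamma(\origin)$ must be one of the finitely many affine fixed points of $f_{P'}$. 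That is what pins down $b$; without it there genuinely is a one-parameter family.

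A related consequence: because the admissible translations are the fixed points of the \emph{target} map, the set of conjugating $\gamma$ varies with $P'$ and does not form a fixed finite group scheme over $R$ as you assert at the end. This undermines your proposed properness argument as written. (The paper, for its part, establishes only that the fibres are finite and does not address properness.)
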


\begin{proof}
The fibres of the map $\pow{N}{d}\to\powm{N}{d}$ consist of collections of polynomials $f\in\pow{N}{d}$ that are $\PGL_{N+1}$-conjugate, so suppose that $f, g\in\pow{N}{d}$ and that $\phi\in\PGL_{N+1}$ satisfies $\phi f = g\phi$. If $\phi$ fixes $H$, then $\phi$ has block form \[\phi=\begin{pmatrix}\psi & b \\ 0 & 1 \end{pmatrix},\] where $\psi$ is an automorphism of the power map on $H$, $b$ is an affine fixed point of $g$. The automorphisms of the power map are generated by the permutation matrices and diagonal matrices whose eigenvalues are all $(d-1)$th roots of unity, and $g$ has only finitely many fixed points, so there are only finitely many possibilities for $f$.

Now, if $\phi$ does not fix $H$, then $\phi H$ is another invariant hyperplane for $g$. But $g$ has at most $N+1$ invariant hyperplanes, and so $\phi$ is in one of at most $N+1$ conjugacy classes of the finite set of matrices mentioned above.
\end{proof}

We will now consider the action of $f:\PP^N\to\PP^N$ on divisors on $\PP^N$. We let $H$ denote the invariant hyperplane of $f$, which we have taken to be $H:\{x_N=0\}$. We let $\Div(\PP^N)$ denote the usual group of Weil divisors on $\PP^N$, and $\Div^+(\PP^N)$ the semigroup of effective divisors. For divisors defined over a given field $k$ over $R$, we use $\Div_k(\PP^N)$ and $\Div^+_k(\PP^N)$, respectively. 
We define a sub-semigroup $\Div^*(\PP^N)\subseteq\Div^+(\PP^N)$ by declaring that $D\in\Div^*(\PP^N)$ if and only if  $D$ intersects $H$ only where the other coordinate hyperplanes intersect $H$. Equivalently, $D\in\Div^*(\PP^N)$ if and only if $D$ is defined by the vanishing of a homogeneous form $F_D(x_0, ..., x_N)$  satisfying 
\[F_D(x_0, ..., x_{N-1}, 0)=\alpha\prod_{i=0}^{N-1}x_i^{e_i},\]
for some $e_i\geq 0$ and some unit $\alpha$, a condition which is clearly independent of the defining equation. One easily checks that $\Div^*(\PP^N)$ is closed under pulling-back by $f$.

In order to describe the operation of pushing forward divisors, we recall the Macaulay resultant. 
The following theorem is a description of this construction in the requisite generality, as developed by Jouanolou~\cite{jouanolou}.
To state the theorem, let $k$ be a commutative ring with identity, let $d_0, ..., d_n\geq 1$ be fixed, let
\[k'=k[u_{i, I}: 0\leq i\leq n, |I|=d_i].\]
We define a polynomial over $k'$ in the variables $x_0, ..., x_n$ by
\[F_i(\mathbf{x})=\sum_{|I|=d_i} u_{i, I}\mathbf{x}^I,\]
and let
$X=\operatorname{Proj}\left(k'[x_0, ..., x_n]/(F_0, ..., F_n)\right).$ Then the canonical map $k'\to \Gamma(X, \Ocal_X)$ has a kernel $\mathfrak{a}\subseteq k'$.

\begin{theorem}[The Macaulay Resultant \cite{jouanolou}]
The ideal $\mathfrak{a}$ is principal, and admits a generator
\[\Res_{x_0, ..., x_n}(F_0, ..., F_n)\in k'\]
which is unique given the stipulation that $\Res_{x_0, ..., x_n}(x_0^{d_0}, ..., x_n^{d_n})=1$.
If $D=d_1d_2\cdots d_n$, then the polynomial $\Res_{x_0, ..., x_n}(F_0, ..., F_n)$ over $k'$ is homogeneous of degree $D/d_j$ in the variables $u_{j, I}$, and has total degree 
$D(1/d_1+\cdots+1/d_n)$. Furthermore, the function $\Res_{x_0, ... x_N}$ is multiplicative in each variable.
\end{theorem}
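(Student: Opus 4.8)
The plan is to follow the standard geometric construction of the resultant, carried out with the scheme-theoretic care of Jouanolou. Consider the universal incidence scheme $X \subseteq \PP^n_{k'}$ cut out by the generic forms $F_0, \dots, F_n$; the structure morphism $\pi \colon X \to \Spec(k')$ is projective, hence closed, and by definition $\mathfrak{a}$ is the kernel of $k' \to \Gamma(X, \mathcal{O}_X)$, so that $V(\mathfrak{a})$ is the scheme-theoretic image $Z$ of $\pi$. The first task is the geometry of $X$ and $Z$. Projecting onto $\PP^n$ via the $x$-coordinates exhibits $X$ as the total space of a vector bundle of rank $M - n - 1$ over $\PP^n$, where $M = \sum_i \binom{n+d_i}{d_i}$ is the number of coefficients and the fibre over $\xi$ is the linear subspace $\{(u_{i,I}) : F_i(\xi) = 0,\ 0 \le i \le n\}$ of $\AA^M$ (each of the $n+1$ conditions is a nonzero linear form, and they involve disjoint sets of variables). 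In particular $X$ is integral and $\dim X = \dim \Spec(k') - 1$, so $Z$ has codimension at least one. A Bézout argument then shows $\dim Z = \dim X$, so that $Z$ is a hypersurface: taking $F_0$ a generic linear form and $F_1, \dots, F_n$ generic of the prescribed degrees, one obtains a system with finitely many common zeros, so $\pi$ has a non-empty finite fibre over a point of $Z$, whence $\pi$ is generically finite onto $Z$.

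Since $X$ is integral, $\Gamma(X, \mathcal{O}_X)$ is a domain and $\mathfrak{a}$ is prime; being a nonzero prime of height one in the unique factorization domain $k'$ (a polynomial ring over $k$), it is principal, $\mathfrak{a} = (R)$ with $R$ irreducible, any two generators differing by a unit of $k$. To normalize, I would evaluate at the diagonal system $F_i = x_i^{d_i}$, which has no common projective zero: the corresponding point lies off $Z$, so $R$ takes a nonzero value $c \in k$ there. That one may in fact arrange $c = 1$ — obtaining the canonical generator $\Res_{x_0, \dots, x_n}(F_0, \dots, F_n)$ with $\Res(x_0^{d_0}, \dots, x_n^{d_n}) = 1$ — is the subtle point: over a field it is automatic, and for a general ring one reduces to $k = \ZZ$ and shows that formation of $\mathfrak{a}$ commutes with reduction modulo $p$ (a cohomology-and-base-change statement for $\pi$), that the degree of $R$ is forced by Bézout and hence preserved by reduction, so that $R \bmod p$ is nonzero and non-vanishing at the diagonal point; thus $c \not\equiv 0 \MOD{p}$ for every prime $p$, forcing $c$ to be a unit.

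It remains to establish the homogeneity, degree, and multiplicativity assertions. The (weighted) homogeneity in the coefficients of each $F_j$ reflects the invariance of the zero locus under rescaling $F_j$: $Z$ is stable under the associated $\mathbb{G}_m$-action on $\AA^M$, so $R$ is a semi-invariant. The precise degrees — and with them the total degree — follow from a Poisson-type product formula: specializing the $F_i$ with $i \ne j$ to generic forms, their common zeros $\xi$ become simple and finite in number, and one identifies $R$, up to a nonzero scalar, with $\prod_\xi F_j(\xi)$, which is visibly homogeneous of the claimed degree in the coefficients of $F_j$. Multiplicativity in the $j$-th argument then follows either from this formula or from the fact that the zero locus of $F_j G_j$ is the union of those of $F_j$ and of $G_j$, so the corresponding incidence hypersurfaces satisfy $Z_{F_j G_j} = Z_{F_j} \cup Z_{G_j}$ as reduced divisors (multiplicity one, since $R$ is irreducible); hence $\Res(\dots, F_j G_j, \dots)$ and $\Res(\dots, F_j, \dots)\,\Res(\dots, G_j, \dots)$ agree up to a unit of $k$, pinned to $1$ by the normalization.

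I expect the main obstacle to be precisely the control of the generator over a general base ring: not merely that $\mathfrak{a}$ is principal, but that it admits a generator which reduces well modulo every prime and takes the value $1$ — not some other nonzero constant — on the diagonal system. This forces one either to produce the explicit Macaulay-matrix description of $R$ as a ratio of two determinants, or to carry out a careful flatness and base-change analysis of $\pi$; this bookkeeping, rather than the underlying geometry, is the technical heart of the argument.
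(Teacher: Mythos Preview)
The paper does not prove this theorem: it is stated with a citation to Jouanolou~\cite{jouanolou} and used as a black box, with no argument given. There is therefore nothing in the paper against which to compare your proposal.

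For what it is worth, your sketch follows the standard geometric construction of the Macaulay resultant (incidence scheme over the coefficient space, codimension-one image, principality via UFD, normalization, homogeneity and multiplicativity from the Poisson formula), which is indeed in the spirit of Jouanolou's treatment. You also correctly identify the genuine subtlety --- controlling the generator over a general base ring so that it specializes to $1$ on the diagonal system --- as the technical crux. But since the paper simply imports the result, your proof is supplying content the paper deliberately omits rather than reproducing or diverging from an argument the paper gives.
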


We now describe the operation of pushing forward a divisor via the polynomial endomorphism $f:\PP^N\to\PP^N$. For any ring $k$ over $R$ and any homogeneous form $F(x_0, ..., x_N)$, we define a homogeneous form $\Res(F, f)\in k[y_0, ..., y_N]$ by
\begin{multline*}
\Res(F, f)(y_0, ..., y_{N-1}, 1)\\=\Res_{x_0, ..., x_N}(F, y_0x_N^d-f_0(x_0, ..., x_{N}), ..., y_{N-1}x_N^d-f_{N-1}(x_0, ..., x_N)).\end{multline*}
Since it is at times more convenient to work with homogeneous coordinates, we note that the homogeneity of the Macaulay resultant ensures that
\begin{multline*}
\Res(F, f)(y_0, ..., y_{N-1}, y_N)y_N^{N\deg(F)d^{N-1}}=\\\Res_{x_0, ..., x_N}(F, y_0x_N^d-y_Nf_0(x_0, ..., x_{N}), ..., y_{N-1}x_N^d-y_Nf_{N-1}(x_0, ..., x_N)).\end{multline*}

For any effective divisor $D$ defined by $F_D=0$, we may now define $f_*(D)$ to be the divisor defined by $\Res(F_D, f)=0$. Note that the multiplicativity of the Macaulay resultant ensures that $f_*:\Div^+(\PP^N)\to\Div^+(\PP^N)$ is $\ZZ$-linear.
We will write $f(D)$ for the radical of $f_*(D)$, where the \emph{radical} of $e_1D_1+\cdots +e_rD_r$ is $D_1+\cdots +D_r$ whenever the $D_i$ are distinct, and the $e_i$ positive. It is also easy to check that the restriction to $H$ of $f_*(D)$ is the push-forward of the restriction of $D$ by the restriction of $f$, and so $f_*:\Div^*(\PP^N)\to\Div^*(\PP^N)$. Since an effective divisor is in $\Div^*(\PP^N)$ if and only if every one of its summands is, it is also clear that $\Div^*(\PP^N)$ is closed under taking radicals.

We note that, by the definition of the MacCaulay resultant, over an algebraically closed field $k$ we have $P\in f_*(D)$ if and only if there is a point $Q\in D$ with $f(Q)=P$. In other words, the set of $k$-rational points on $f_*(D)$ (equivalently $f(D)$) is precisely the image under $f$ of the set of $k$-rational points on $D$.

We can now precisely define what it means for a divisor to be preperiodic under $f$.
We say that the divisor $D\in\Div^+(\PP^N)$ is \emph{preperiodic} for the morphism $f:\PP^N\to\PP^N$ if and only if the sequence $f^n(D)$ takes only finitely many values as $n\to\infty$ or, equivalently, if there are only finitely many irreducible divisors which occur as summands of $f^n_*(D)$ as $n\to\infty$. This is equivalent to the usual definition over $\CC$, which defines $D$ to be preperiodic if and only if the set
\[D \cup f(D)\cup f^2(D)\cup \cdots\] is an algebraic variety.

To any $f\in\pow{N}{d}$, we associate a homogeneous \emph{Jacobian form}
\[J_f(x_0, ..., x_N)=\det\begin{pmatrix}
\frac{\partial f_0}{\partial x_0} & \frac{\partial f_0}{\partial x_1} & \cdots & \frac{\partial f_0}{\partial x_{N-1}}\\
\frac{\partial f_1}{\partial x_0} & \frac{\partial f_1}{\partial x_1} & \cdots & \frac{\partial f_1}{\partial x_{N-1}}\\
\vdots & \vdots && \vdots\\
\frac{\partial f_{N-1}}{\partial x_0} & \frac{\partial f_{N-1}}{\partial x_1} & \cdots & \frac{\partial f_{N-1}}{\partial x_{N-1}}\\
\end{pmatrix}\in A[x_0, ..., x_N],\]
which one can check is a form of degree $N(d-1)$ in the variables $x_0, ..., x_N$. The \emph{critical divisor} of $f$ is the divisor $C_f$ defined by $\{J_f=0\}$, and we observe that $C_f+(d-1)H$ is the ramification divisor of the map $f:\PP^N\to\PP^N$ (our restriction of the characteristic avoids issues of wild ramification).  Note that since each partial derivative has the property that the coefficient of $\mathbf{x}^I$ is homogeneous of weight $I_N$ (with respect to the grading defined on $A$) the same is true of the determinant $J_f$.

\begin{lemma}
If $J_f$ is the Jacobian form of $f$, defining $C_f$, then write the form $\Res(J_f, f)$, which defines $f_*(C_f)$ as
\begin{equation}\label{eq:branchdefinition}
\Res(J_f, f)(\mathbf{y})=\prod_{i=0}^{N-1}y_i^{d^{N-1}(d-1)}+\sum_{J_N\neq 0}b_{J}\mathbf{y}^J.\end{equation}
This form has degree $d^{N-1}(d-1)N$ in $\mathbf{y}$, and for each multi-index $J$ the coefficient $b_J\in A$ is either $0$ or homogeneous of weight $dJ_N$.
\end{lemma}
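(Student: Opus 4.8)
The plan is to exploit a torus-equivariance of $\Res(J_f,f)$ that reflects the grading on $A$. First I would introduce, for a nonzero scalar $t$, the substitution $S_t$ acting on $A$ by $a_{i,I}\mapsto t^{I_N}a_{i,I}$ and on the target coordinates by $y_j\mapsto t^{d}y_j$ for $0\le j<N$ and $y_N\mapsto y_N$. Since every $I\in\Ind^*(N,d)$ has $I_N\ge 1$, formula \eqref{eq:genericpoly} immediately gives
\[f_i\bigl(S_t(a);\ x_0,\dots,x_{N-1},\,t^{-1}x_N\bigr)=f_i\bigl(a;\ x_0,\dots,x_N\bigr)\qquad(0\le i\le N);\]
differentiating in $x_j$ for $0\le j<N$ (an operation untouched by the rescaling of $x_N$) propagates this to every entry $\partial f_i/\partial x_j$ of the Jacobian matrix, so $J_f\bigl(S_t(a);x_0,\dots,t^{-1}x_N\bigr)=J_f(a;\mathbf{x})$, and likewise $t^{d}y_j(t^{-1}x_N)^d-y_Nf_j\bigl(S_t(a);x_0,\dots,t^{-1}x_N\bigr)=y_jx_N^d-y_Nf_j(a;\mathbf{x})$. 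In other words, once $S_t$ is applied to the parameters and to $\mathbf{y}$, the linear change $x_N\mapsto t^{-1}x_N$ of the source coordinates restores each of the forms $J_f,\ y_0x_N^d-y_Nf_0,\ \dots,\ y_{N-1}x_N^d-y_Nf_{N-1}$ to itself.

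Next I would feed this into the Macaulay resultant. Using that it is multiplicative in each slot and transforms under a linear change $M$ of the source variables by $(\det M)^{e_0e_1\cdots e_N}$ (with $e_0,\dots,e_N$ the degrees of the forms), applied with $M=\operatorname{diag}(1,\dots,1,t^{-1})$, $e_0=\deg J_f=N(d-1)$ and $e_1=\dots=e_N=d$, and using that $S_t$ fixes $y_N$ (hence commutes with clearing the power of $y_N$ separating $\Res(J_f,f)(\mathbf{y})$ from the resultant in $x_0,\dots,x_N$), one gets the weighted equivariance
\[\Res(J_f,f)\bigl(S_t(a);\,S_t(\mathbf{y})\bigr)=t^{\,N(d-1)d^{N}}\,\Res(J_f,f)(a;\,\mathbf{y}).\]
Writing $\Res(J_f,f)=\sum_J c_J(a)\,\mathbf{y}^J$, homogeneous in $\mathbf{y}$ of some degree $\delta$, and noting $S_t(\mathbf{y})^J=t^{d(\delta-J_N)}\mathbf{y}^J$, comparison of coefficients would then show that each $c_J$ is homogeneous for the grading on $A$ of weight $N(d-1)d^{N}-d\delta+dJ_N$.

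To finish I would determine $\delta$ and the $y_N$-free part by specializing at $a=0$, the $d$th-power map, where $J_f=d^{N}\prod_{i<N}x_i^{d-1}$. Multiplicativity of the resultant plus the base identity expressing $\Res_{x_0,\dots,x_N}(x_i,G_1,\dots,G_N)$ as the resultant, in the remaining $N$ variables, of $G_1,\dots,G_N$ restricted to $x_i=0$, computes $\Res(J_f,f)|_{a=0}=c\prod_{i<N}y_i^{d^{N-1}(d-1)}$ for a nonzero $c\in R$ (a power of $d^{N}$ up to sign, hence a unit of $R$). A form in $A[\mathbf{y}]$ homogeneous of degree $\delta$ in $\mathbf{y}$ specializes to $0$ or to a form of degree $\delta$, and this one specializes to a nonzero form of degree $d^{N-1}(d-1)N$, so $\delta=d^{N-1}(d-1)N$; substituting this, every $c_J$ has weight exactly $dJ_N$, and in particular every $c_J$ with $J_N=0$ has weight $0$ and so lies in $R$. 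Hence the sub-sum $\sum_{J_N=0}c_J\mathbf{y}^J$ is independent of the parameters and equals $c\prod_{i<N}y_i^{d^{N-1}(d-1)}$; since the divisor $f_*(C_f)$ fixes its defining form only up to a scalar, I would replace the form by $c^{-1}\Res(J_f,f)$ — which multiplies every $c_J$ by the same unit of $R$ and so preserves weights — after which the $J_N=0$ part is the single monomial $\prod_{i<N}y_i^{d^{N-1}(d-1)}$, the remaining monomials are exactly those with $J_N\ne 0$, and their coefficients are $0$ or homogeneous of weight $dJ_N$. With $\delta=d^{N-1}(d-1)N$ this yields \eqref{eq:branchdefinition}.

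The hard part will be keeping the two homogeneity computations for the Macaulay resultant accurate — the exact exponent $e_0\cdots e_N$ of $\det M$ in its transformation law, and the preliminary fact that $\Res(J_f,f)$ is genuinely homogeneous in $\mathbf{y}$ before it is expanded; observing that $S_t$ fixes $y_N$ is precisely what removes the need to also compute the power of $y_N$ in the homogeneous resultant identity.
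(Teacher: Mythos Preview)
Your argument is correct and follows essentially the same strategy as the paper: both proofs establish the weight claim by exhibiting a $\mathbb{G}_\mathrm{m}$-equivariance of $\Res(J_f,f)$ that reflects the grading on $A$, and both derive this equivariance from the transformation behaviour of the Macaulay resultant under the scaling $x_N\mapsto \alpha x_N$ (together with the observation that this scaling, applied to $f_i$ and $J_f$, is exactly undone by the action $a_{i,I}\mapsto\alpha^{I_N}a_{i,I}$).

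The one genuine, if minor, difference is which target variable carries the torus action. The paper lets $\alpha$ act trivially on $y_0,\dots,y_{N-1}$ and shows $\alpha\cdot G(\mathbf{y})=G(y_0,\dots,y_{N-1},\alpha^d y_N)$; this immediately gives weight $dJ_N$ for the coefficient of $\mathbf{y}^J$ without needing to know the total degree in $\mathbf{y}$. Your dual choice, scaling $y_0,\dots,y_{N-1}$ by $t^d$ and fixing $y_N$, produces the factor $t^{d(\delta-J_N)}$ and so forces you to determine $\delta$ first, which you then do by the specialization $a=0$. Both routes are fine; the paper's is slightly more direct on the weight, while yours is more explicit about the degree $\delta$ and about the leading term (the paper simply asserts these ``follow immediately from the properties of the Macaulay resultant''). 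Your observation that the constant $c$ arising at $a=0$ is a unit of $R$, and that one may harmlessly renormalize by $c^{-1}$ since the divisor determines its form only up to a unit, is a point the paper glosses over.
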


\begin{proof}
The leading term, and the homogeneity and degree in $\mathbf{y}$, follow immediately from the properties of the Macaulay resultant. It remains to show that $b_J$ has degree $dJ_N$ with respect to the grading on $A$.

We first note two basic properties of the Macaulay resultant, both following from the standard homogeneity properties, necessary for the proof, namely
\[
\Res_{x_0, ..., x_n}(F_0,..., \beta F_i, ..., F_n)\\=\beta^{D/d_i}\Res_{x_0, ..., x_n}(F_0,..., F_n)
\]
and
\begin{multline*}
\Res_{x_0, ..., x_n}(F_0(x_0, ..., \beta x_i, ..., x_n),...,  F_n(x_0, ..., \beta x_i, ..., x_n))\\=\beta^{D}\Res_{x_0, ..., x_n}(F_0, ..., F_n),
\end{multline*}
where $d_i=\deg(F_i)$ and $D=d_1d_2\cdots d_n$.

If we define an action of $\mathbb{G}_\mathrm{m}$ on $A$ by $\alpha\cdot a_{i, I}=\alpha^{I_N} a_{i, I}$, then  the non-zero $g\in A$ satisfying $\alpha\cdot g=\alpha^w g$ are precisely the homogeneous elements of degree $w$. Extending this action to polynomials over $A$ by letting $\mathbb{G}_\mathrm{m}$ act trivially on the variables,  what we wish to show is that if $G=\Res(J_f, f)$, normalized as above, then
\begin{eqnarray*}
\alpha\cdot G(y_0, ..., y_N)&:=& \prod_{i=0}^{N-1}y_i^{d^{N-1}(d-1)}+\sum_{I}\left(\alpha\cdot b_{I}\right)\mathbf{y}^I\\
&=&\prod_{i=0}^{N-1}y_i^{d^{N-1}(d-1)}+\sum_{I}\left(\alpha ^{dI_N}b_{I}\right)\mathbf{y}^I\\
&=& G(y_0, ..., \alpha^d y_N).
\end{eqnarray*}
Since the resultant is a polynomial in the coefficients of the inputs, computing the resultant commutes with the above-defined action by $\mathbb{G}_\mathrm{m}$, and we have
\begin{multline*}
\alpha\cdot G(y_0, ..., y_N)=y_N^{-N^2(d-1)d^N}\Res_{x_0, ..., x_N}(\alpha\cdot J_f, y_0x_N^d-y_N(\alpha\cdot f_0),\\  ..., y_{N-1}x_N^d-y_N(\alpha\cdot f_{N-1})).
\end{multline*}
Now, since the coefficient of $\mathbf{x}^I$ in $f_i$ is homogeneous of degree $I_N$, we see that \[\alpha\cdot f_i(x_0, ..., x_N)=f_i(x_0, ..., x_{N-1}, \alpha x_N),\] and similarly for $J_f$.
Thus, by the above homogeneity properties, we have (for $D=(d-1)Nd^{N}$ the product of degrees of $J_f$ and the $y_ix_N^d-y_Nf_i$ as forms in $\mathbf{x}$)
\begin{eqnarray*}
\alpha\cdot G(y_0, ..., y_N)&=&y_N^{-N^2(d-1)d^{N-1}}\Res\Big(J_f(x_0, ..., x_{N-1}, \alpha x_N),\\ 
&&\qquad y_0x_N^d-y_Nf_0(x_0, ..., x_{N-1}, \alpha x_N), \\ &&\qquad..., y_{N-1}x_N^d-y_Nf_{N-1}(x_0, ..., x_{N-1}, \alpha x_N)\Big)\\
&=& y_N^{-N^2(d-1)d^{N-1}}(\alpha^d)^{-N^2(d-1)d^{N-1}}\Res\Big(J_f(x_0, ..., x_{N-1}, \alpha x_N),\\ 
&&\qquad \alpha^dy_0x_N^d-\alpha^dy_Nf_0(x_0, ..., x_{N-1}, \alpha x_N), \\ &&\qquad..., \alpha^dy_{N-1}x_N^d-\alpha^dy_Nf_{N-1}(x_0, ..., x_{N-1}, \alpha x_N)\Big)\\
&=&(\alpha^d y_N)^{-N^2(d-1)d^N}\Res\Big(J_f(x_0, ..., x_{N-1}, x_N),\\ 
&&\qquad y_0x_N^d-\alpha^dy_Nf_0(x_0, ..., x_{N-1},  x_N), \\ &&\qquad..., y_{N-1}x_N^d-\alpha^dy_Nf_{N-1}(x_0, ..., x_{N-1},  x_N)\Big)\\
&=&G(y_0, ..., y_{N-1}, \alpha^d y_N).
\end{eqnarray*}
\end{proof}

Now for each $J\in \Ind(N, d^{n-1}(d-1)N)$ with $J_N\neq 0$ introduce an indeterminate $b_J$, and let \[B=R\Big[b_J:J\in \Ind(N, d^{N-1}(d-1)N), J_N\neq 0\Big]\] viewed as a graded $R$-algebra with $b_J$ homogeneous of degree $J_N$. Viewing $b_J$ as a coefficient of the above form gives a homomorphism $B\to A$ taking elements of degree $w$ to elements of degree $dw$. In other words, the construction $f\mapsto f_*(C_f)$ gives rise to a rational map $\Proj A\to \Proj B$ of projective $R$-schemes, with degree $d$. In some sense, the crux of our argument is the observation that this map is regular.

\begin{lemma}\label{lem:crux}
The rational map $\Proj A\to\Proj B$ corresponding to $f\mapsto f_*(C_f)$ is a morphism.
\end{lemma}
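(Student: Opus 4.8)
The plan is to show that the rational map $\Proj A \to \Proj B$ extends to a morphism by checking that the base locus is empty, i.e.\ that there is no point of $\Proj A$ at which every coordinate $b_J$ (the coefficients of $\Res(J_f, f)$ appearing in \eqref{eq:branchdefinition}) vanishes. By the previous lemma, $\Res(J_f, f)$ is the defining form of $f_*(C_f)$, with leading term $\prod_{i=0}^{N-1} y_i^{d^{N-1}(d-1)}$, so this form is never identically zero: the coordinate $b_J$ corresponding to $\mathbf{y}^J = \prod y_i^{d^{N-1}(d-1)}$ is identically $1$. At first glance this would seem to finish the argument immediately, but the subtlety is the weighted grading: that particular monomial has $J_N = 0$, so its coefficient is \emph{not} one of the homogeneous coordinates $b_J$ on $\Proj B$ (which range over $J$ with $J_N \neq 0$). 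So emptiness of the base locus is a genuine claim about the non-vanishing of the $b_J$ with $J_N > 0$, and this is where the work lies.

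First I would reduce to a pointwise statement over an algebraically closed field: the rational map fails to be a morphism at a point $P \in \Proj A(\bar k)$ precisely when $b_J(P) = 0$ for \emph{every} $J$ with $J_N \neq 0$, which by \eqref{eq:branchdefinition} means $\Res(J_{f_P}, f_P)(\mathbf{y})$ is a scalar multiple of $\prod_{i=0}^{N-1} y_i^{d^{N-1}(d-1)}$ — equivalently, $f_*(C_f)$ is supported entirely on the coordinate hyperplanes. Here $P$ lies in the weighted projective space $\Proj A$, so $P$ is represented by a tuple $(a_{i,I})$ not all zero, defining a morphism $f = f_P$ via \eqref{eq:genericpoly}; the weighting means we should think of $P$ as lying in a suitable compactification and allow some $a_{i,I}$ with $I_N = 0$ to dominate. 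The heart of the matter is to rule out $f_*(C_f) \subseteq \bigcup_{i=0}^{N} \{y_i = 0\}$.

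The main obstacle — and the technical core — is therefore the following: I need to show that for every point $P$ of $\Proj A$, the pushforward of the critical divisor meets some non-coordinate point of $\PP^N$. Equivalently, since $P \in f_*(C_f)$ iff $P = f(Q)$ for some $Q \in C_f$ (by the Macaulay-resultant description of images noted just before the preperiodicity definition), I want: the image $f(C_f)$ is not contained in the union of coordinate hyperplanes. I would argue by examining the geometry at infinity in $\Proj A$. A point of $\Proj A$ that is ``at infinity'' with respect to this weighting corresponds to a degeneration in which the terms $a_{i,I} \mathbf{x}^I$ with $I_N = 0$ — i.e.\ the pure powers/monomials in $x_0, \dots, x_{N-1}$ — may have large coefficients relative to the $x_N$-divisible terms. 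In the limit, $f$ still restricts to (a scaling of) the power map on $H$, so $H$ is still totally invariant, the Jacobian form $J_f$ still has the shape dictated by the grading, and one checks that $C_f$ cannot collapse into the coordinate hyperplanes: the leading behavior of $J_f$ off of $H$ forces $C_f$ to contain points with all coordinates nonzero, whose images under $f$ again have a nonzero $x_N$-coordinate (since $f_N = x_N^d$) and cannot all land on $\bigcup\{x_i = 0\}$ for $i < N$. I would make this precise by a direct computation with the resultant: specialize the $y_i$ for $i < N$ to generic nonzero values and show $\Res(J_f, f)$, as a polynomial in $y_N$, has a nonzero coefficient of some positive power of $y_N$, uniformly over $\Proj A$. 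Concretely, the coefficient of $y_N^{\,d^{N-1}(d-1)N}\cdot(\text{something})$ — or more robustly, the value of $\Res(J_f, f)$ at a point with $y_N \neq 0$ — can be computed and seen to be a nonzero element of the homogeneous coordinate ring, which gives a $b_J$ with $J_N \neq 0$ not vanishing at $P$. Since this holds at every point, the base locus is empty and the rational map is a morphism.

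A clean alternative I would try in parallel, likely shorter: show directly that the ideal in $A$ generated by the $b_J$ with $J_N \neq 0$ has the same radical, homogeneously, as the irrelevant ideal — i.e.\ that their common zero locus in $\Proj A$ is empty — by exhibiting, perhaps after permuting roles of the $y_i$ or using the multiplicativity of the resultant, an explicit algebraic identity expressing a power of each $a_{i,I}$ (suitably weighted) in terms of the $b_J$. Multiplicativity of $\Res$ in each argument, together with the behavior of $\Res(J_f, f)$ under the $\mathbb{G}_\mathrm{m}$-action established in the previous lemma, should make such an identity tractable. Either way, the one real difficulty is controlling the degenerations at the boundary of the weighted projective space $\Proj A$, and checking that the power-map structure on $H$ persists in those limits strongly enough to keep $f_*(C_f)$ off the coordinate hyperplanes.
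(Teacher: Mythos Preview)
Your reduction is correct: a point $P\in\Proj A(\overline{k})$ is in the base locus exactly when every $b_J$ with $J_N\neq 0$ vanishes at $P$, which says $f_*(C_f)$ is supported on the coordinate hyperplanes $H_0,\dots,H_{N-1}$. From there, however, the proposal loses traction.

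The talk of ``points at infinity'' and ``degenerations in which the terms $a_{i,I}\mathbf{x}^I$ with $I_N=0$ may have large coefficients'' rests on a misreading of $A$: the indeterminates $a_{i,I}$ range over $I\in\Ind^*(N,d)$, i.e.\ $0<I_N<d$, so \emph{every} variable has positive weight and every point of $\Proj A$ is represented by an honest morphism $f_P$ of the form~\eqref{eq:genericpoly} with some $a_{i,I}\neq 0$. There is no boundary stratum to analyze; the problem is entirely about such genuine $f$. Your sentence ``$C_f$ contains points with all coordinates nonzero, whose images \dots\ cannot all land on $\bigcup\{x_i=0\}$'' simply restates what is to be proved. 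The ``clean alternative'' of producing an identity $a_{i,I}^{e/I_N}=\sum b_J^{(d-1)!/J_N}g_{i,I,J}$ is exactly Lemma~\ref{lem:nullstellensatz}, which is \emph{deduced} from the present lemma via the Nullstellensatz, so invoking it here would be circular.

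The paper's argument is a ramification/degree count you do not attempt. Suppose $f_*(C_f)=d^{N-1}(d-1)\sum_{i<N} H_i$ but $f$ is not the power map. Partition the irreducible components of $C_f$ into sets $S_i$ according to which $H_i$ they map onto. Writing $f^*(H_i)=\sum e_D D$ and using tame ramification (so $D$ appears in $C_f$ with multiplicity $e_D-1$), one gets
\[
dN=\sum_{i<N}\deg f^*(H_i)\ \geq\ \deg(C_f)+\sum_{i<N}\sum_{D\in S_i}\deg(D)=(d-1)N+\sum_{i<N}\sum_{D\in S_i}\deg(D),
\]
forcing each $S_i$ to consist of a single hyperplane. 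That hyperplane must contain both $H\cap H_i$ and the origin $\mathbf{0}$, hence equals $H_i$; so $f^*(H_i)=dH_i$ for all $i$ and $f$ is the power map, a contradiction. This Riemann--Hurwitz-style bookkeeping is the missing idea.
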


\begin{proof}
If the claim is false, then there exists an algebraically closed field $k$ over $R$ and an $f\in\pow{N}{d}(k)$ such that $b_J(f)=0$ for all $J$, but $a_{i, I}(f)\neq 0$ for at least one pair $i, I$. In other words, if $H_i=\{x_i=0\}\subseteq \PP^N_k$, we have
 \[f_*(C_f)=d^{N-1}(d-1)(H_0+\cdots +H_{N-1})\] but $f(x_0,\cdots, x_N)\neq [x_0^d:\cdots: x_N^d]$.  We will show that this is impossible.

Given $f$ with the above properties, partition the support of $C_f$ as
\[\operatorname{Supp}(C_f)=S_1\cup\cdots\cup S_{N-1},\]
where for each $i$ and each $D\in S_i$, we have $f(D)=H_i$. Note that none of the $S_i$ can be empty, and that they are disjoint. Now, let $e_D\geq 0$ be defined for each $D\in\Div^*(\PP^N)$ such that $f^*(H_i)=\sum e_DD$. Note that, since our assumption of characteristic rules out wild ramification, it must be the case that $D$ occurs in $C_f$ with multiplicity $e_D-1$.  We have
\begin{eqnarray*}
dN&=&\sum_{i=0}^{N-1}\deg f^*(H_i)\\
&= &\sum_{i=0}^{N-1}\sum_{D\in\operatorname{Supp} (f^*(H_i))}e_D \deg(D) \\
&\geq &\sum_{i=0}^{N-1}\sum_{D\in S_i}e_D \deg(D)\\
&=&\sum_{i=0}^{N-1}\sum_{D\in S_i}(e_D-1) \deg(D)+\sum_{i=0}^{N-1}\sum_{D\in S_i}\deg(D)\\
&=&\deg(C_f)+\sum_{i=0}^{N-1}\sum_{D\in S_i}\deg(D).
\end{eqnarray*}
Since $\deg(C_f)=(d-1)N$, and since $\sum_{D\in S_i}\deg(D)\geq 1$ for each $i$, we see that each $S_i$ must consist of a single divisor of degree 1, say $S_i=\{D_i\}$. For each $i$, $D_i$ is a hyperplane containing both the hyperplane $H_i\cap H$ of $H$ and the point $\origin$, which leaves us only with the possibility $D_i=H_i$. Our function now has the property that $f^*(H_i)=dH_i$, for each $0\leq i \leq N$, in other words we have $f(x_0,\cdots, x_N)=[x_0^d:\cdots :x_N^d]$. This is a contradiction.
\end{proof}

Note that the morphism in Lemma~\ref{lem:crux} is not a natural object from the point of view of dynamics. The equivalence which collapses $\pow{N}{d}\setminus\{\origin\}$ to $\Proj A$ corresponds to pre-composition of polynomials by a scaling, without a corresponding post-composition, an operation which does not commute with iteration. In fact, our results are not based on studying the forward orbit of $C_f$ under $f$ so much as studying its immediate forward image. In light of this, the focus on $\Proj A$ is not particularly unusual.

\begin{lemma}\label{lem:nullstellensatz}
There exist an integer $e$ divisible by $(d-1)!$ and polynomials $g_{i, I, J}\in A$ homogeneous of degree $e-d!$ with respect to the gradation on $A$, such that for each
$0\leq i< N$ and multi-index $|I|=d$ with $I_N\neq 0, d$
 we have
\[a_{i, I}^{e/I_N}=\sum b_{J}^{(d-1)!/J_N}g_{i, I, J},\]
where the sum is over  multi-indices
$J$ with $|J|=d^{N-1}(d-1)N$.
\end{lemma}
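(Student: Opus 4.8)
The plan is to recognise the lemma as an effective, graded avatar of Hilbert's Nullstellensatz, fuelled by Lemma~\ref{lem:crux}. I would first record the ideal-theoretic content of Lemma~\ref{lem:crux}. Recall that $b_J$ has weight $dJ_N$ in $A$, so for any $J$ with $1\le J_N\le d-1$ the element $b_J^{(d-1)!/J_N}\in A$ is homogeneous of weight $d\cdot(d-1)!=d!$, the same for every such $J$. Let $\mathfrak c\subseteq A$ be the (homogeneous) ideal they generate, and let $\mathfrak m=(a_{i,I})$ be the irrelevant ideal. Since each $b_J$ vanishes at $\origin$ we have $\mathfrak c\subseteq\mathfrak m$, and the point is the reverse inclusion up to radical: $\mathfrak m\subseteq\sqrt{\mathfrak c}$, equivalently the $b_J$ with $J_N\le d-1$ have no common zero on $\pow{N}{d}$ away from $\origin$. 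Over an algebraically closed field $k$ over $R$, such a common zero is an $f\in\pow{N}{d}(k)$ for which $f_\ast(C_f)$ coincides with $d^{N-1}(d-1)(H_0+\cdots+H_{N-1})$ to order $d-1$ transversally to $H$; since the degree bookkeeping in the proof of Lemma~\ref{lem:crux} involves only $C_f$ and its immediate forward image near $H$, the same argument ought to force $f=[x_0^d:\cdots:x_N^d]$, i.e.\ $a_{i,I}(f)=0$ for all $i,I$. As $A$ is a Jacobson ring of finite type over $R=\ZZ[\tfrac{1}{2},\dots,\tfrac{1}{d}]$, whose closed points have residue fields finite of characteristic $>d$ (hence embedding into such a $k$), it follows that $\mathfrak m\subseteq\sqrt{\mathfrak c}$ as ideals of $A$.

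The remainder is bookkeeping. Each $a_{i,I}$ lies in $\sqrt{\mathfrak c}$, so $a_{i,I}^{\,n_{i,I}}\in\mathfrak c$ for some $n_{i,I}\ge1$; over the finitely many pairs $(i,I)$ set $n=\max n_{i,I}$ and let $e$ be any multiple of $(d-1)!$ with $e\ge n(d-1)$, so that $a_{i,I}^{\,e/I_N}\in\mathfrak c$ for every $(i,I)$. Writing out this membership gives $a_{i,I}^{\,e/I_N}=\sum_J g_{i,I,J}\,b_J^{(d-1)!/J_N}$ with $g_{i,I,J}\in A$ and the sum over $J$ with $J_N\le d-1$; extracting the weight-$e$ graded component of the right-hand side — legitimate since $\mathfrak c$ is homogeneous and the left-hand side is homogeneous of weight $e$ — forces each $g_{i,I,J}$ to be homogeneous of weight $e-d!$, since $b_J^{(d-1)!/J_N}$ has weight $d!$. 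Taking $g_{i,I,J}=0$ for the remaining $J$ yields the asserted identity, with one $e$ valid for every pair.

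The genuinely delicate step is extracting $\mathfrak m\subseteq\sqrt{\mathfrak c}$ from Lemma~\ref{lem:crux}. Lemma~\ref{lem:crux} as stated only detects the origin via \emph{all} the $b_J$ at once, whereas the clean exponents $d!$ and $(d-1)!/J_N$ force us to work with the sub-collection of $b_J$ of weight at most $d(d-1)$; one must therefore verify that the degree count in the proof of Lemma~\ref{lem:crux} still bites when only the ``low-order'' jet of $f_\ast(C_f)$ along $H$ is controlled — or, failing that, show the remaining $b_J$ lie in $\sqrt{\mathfrak c}$ by an argument about weights alone. Everything past that point — the descent from geometric points over fields to ideals of $A$ using Jacobson-ness, the graded Nullstellensatz, the weight arithmetic yielding $\deg g_{i,I,J}=e-d!$, and the uniformity of $e$ — is routine.
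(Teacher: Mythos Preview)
Your approach is essentially the paper's: reduce to the Nullstellensatz over the Jacobson ring $A$, invoke Lemma~\ref{lem:crux} to see the relevant ideal has no zero away from the origin, then extract the homogeneous identity by weight-counting. The only cosmetic difference is that the paper dehomogenises by setting $a_{i,I}=1$ and shows $1$ lies in the ideal generated by the $b_J^{(d-1)!/J_N}$, then rehomogenises, whereas you work directly with the graded ideal $\mathfrak c$ and the irrelevant ideal $\mathfrak m$.

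Regarding the ``delicate step'' you flag --- that the exponent $(d-1)!/J_N$ is only an integer for $J_N\mid(d-1)!$, so $\mathfrak c$ is generated by a proper sub-collection of the $b_J$, whereas Lemma~\ref{lem:crux} detects the origin only via \emph{all} the $b_J$ --- the paper's own proof does not address this either. It takes $\mathfrak a$ to be ``the ideal generated by the $b_J^{(d-1)!/J_N}$'' without restricting $J$, and then asserts that a maximal ideal containing $\mathfrak a$ yields a point where ``the polynomials $b_J$ have a common root with $a_{i,I}=1$, violating Lemma~\ref{lem:crux}.'' So your concern is a concern about the lemma as stated, not about your argument. The painless repair (for both proofs) is to replace $(d-1)!$ throughout by any common multiple $M$ of $1,2,\ldots,d^{N-1}(d-1)N$, and $d!$ by $dM$; then every $b_J^{M/J_N}$ is an honest element of $A$ of weight $dM$, Lemma~\ref{lem:crux} applies verbatim, and nothing downstream (Lemmas~\ref{lem:localnull} and~\ref{lem:complexcritlowerbound}) depends on the particular constant. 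You were right to flag it.
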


\begin{proof}
This is a more-or-less standard application of Hilbert's Nullstellensatz, but we outline the details for the convenience of the reader. For a given $i$ and $I$, we dehomogenize all polynomials by setting $a_{i, I}=1$, and let $\mathfrak{a}\subseteq A$ be the ideal generated by the $b_J^{(d-1)!/J_N}$.

 If $1\not\in \mathfrak{a}$, then $\mathfrak{a}\subseteq \mathfrak{m}$ for some maximal ideal $\mathfrak{m}$ of $A$. Since the ring $R$ is Jacobson, the Nullstellensatz ensures that $\mathfrak{m}\cap R=pR$ for some $p>d$, and that $A/\mathfrak{m}$ is a finite extension of  $R/pR\cong \FF_p$.  But then in this finite field the polynomials $b_{J}$ have a common root with $a_{i, I}=1$, violating Lemma~\ref{lem:crux}. 
 
 It must be the case, then, that $1\in \mathfrak{a}$, and by rehomogenizing we may write 
\[a_{i, I}^{e/I_N}=\sum b_{J}^{(d-1)!/J_N}g_{i, I, J},\]
for some $e$ divisible by $I_N$, and some $g_{i, I, J}\in A$.  Since we are free to increase $e$, we may assume that it is the same value for every $a_{i, I}$.  Since $a_{i, I}^{e/I_N}$ is homogeneous of degree $e$, and $b_{J}^{(d-1)!/J_N}$ is homogeneous of degree $d!$, it must be the case that $g_{i, I, J}$ is homogeneous of degree $e-d!$.
\end{proof}

We close this section by confirming the first part of Theorem~\ref{th:rigidity}, namely that points $f\in \pow{N}{d}$ such that $C_f$ is preperiodic for $f$ consist of a countable union of subvarieties of the moduli space. The main content of this theorem, namely that these subvarieties are zero-dimensional, will be proven later.

\begin{lemma}
For every $m>n\geq 0$, the condition $\operatorname{Supp}(f^m(C_f))\subseteq \operatorname{Supp}(f^n(C_f))$ defines a non-empty closed subscheme $C_{n, m}\subseteq \pow{N}{d}$ over $R$. Similarly, the condition $\operatorname{Supp}(f^m(C_f))\subseteq \operatorname{Supp}(f^n(C_f))$ defines a non-empty closed subscheme $\mathsf{C}_{n, m}\subseteq \powm{N}{d}$
\end{lemma}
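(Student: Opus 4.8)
The plan is to exhibit the locus in question, inside $\pow{N}{d}=\Spec A$, as the vanishing scheme of a global section of a finite projective $A$-module, and then to transport the conclusion to $\powm{N}{d}$ along the finite morphism of the preceding lemma.

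First, the defining forms. Iterating the pushforward operation $f_*$ (given by the Macaulay resultant) applied to the Jacobian form, we obtain for each $k\geq 0$ a form $G_k\in A[x_0,\dots,x_N]$, homogeneous in $\mathbf{x}$ of a fixed positive degree $\delta_k$, whose coefficients are polynomials in the $a_{i,I}$ and which cuts out $f^k_*(C_f)$; since taking radicals does not alter support, $\operatorname{Supp}(f^k(C_f))=\operatorname{Supp}(f^k_*(C_f))=V(G_k)$ at every geometric point. The essential observation is that each $G_k$ is \emph{nowhere vanishing} on $\pow{N}{d}$, i.e.\ the ideal of $A$ generated by the coefficients of $G_k$ is the unit ideal. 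Indeed, the coefficient of $x_0^{d-1}\cdots x_{N-1}^{d-1}$ in $J_f=G_0$ is exactly $d^N$ (every other contribution to the determinant carries a positive power of $x_N$), which is a unit in $R$; hence for any field $K$ over $R$ and any $f\in\pow{N}{d}(K)$ the form $J_f$ is nonzero, so $C_f$ is a genuine effective divisor of degree $(d-1)N$, and then --- $f$ being a morphism of $\PP^N$ --- each $f^k_*(C_f)$ is again a nonzero effective divisor, whence $G_k(f)\neq 0$. Applying this with $K=\operatorname{Frac}(A/\mathfrak{p})$ for each prime $\mathfrak{p}$ shows that no prime of $A$ contains all coefficients of $G_k$, which is the claim. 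Finally, by the Nullstellensatz, at any geometric point $\operatorname{Supp}(f^m(C_f))\subseteq\operatorname{Supp}(f^n(C_f))$ if and only if $G_m\mid G_n^{\delta_m}$, equivalently if and only if $G_n^{\delta_m}=G_m\,Q$ for some form $Q$ of degree $\delta_m(\delta_n-1)$ in $\mathbf{x}$.

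Now consider the $A$-linear map $\mu$ of ``multiplication by $G_m$'' from the free $A$-module of forms of degree $\delta_m(\delta_n-1)$ to the free $A$-module of forms of degree $\delta_m\delta_n$. Since $G_m$ is nowhere vanishing, for every prime $\mathfrak{p}$ the map $\mu\otimes_A k(\mathfrak{p})$ is multiplication by a nonzero form in the domain $k(\mathfrak{p})[\mathbf{x}]$, hence injective; from the presentation $0\to E\xrightarrow{\ \mu\ }F\to\operatorname{coker}(\mu)\to 0$ with $E,F$ free this gives $\operatorname{Tor}^A_1(\operatorname{coker}(\mu),k(\mathfrak{p}))=0$ for all $\mathfrak{p}$, so $\operatorname{coker}(\mu)$ is finitely presented and flat, i.e.\ a finite projective $A$-module, say $C$. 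Because cokernels commute with base change, the image $\bar{s}$ of $G_n^{\delta_m}$ in $C$ is a global section whose zero scheme $Z(\bar{s})$ has underlying set exactly $\{\mathfrak{p}:G_m(\mathfrak{p})\mid G_n(\mathfrak{p})^{\delta_m}\}$, and the zero scheme of a section of a finite projective module is closed. Set $C_{n,m}:=Z(\bar{s})$. It is non-empty, as witnessed by the power map $f=[x_0^d:\cdots:x_N^d]$ (the point $\origin\in\pow{N}{d}$): here $C_f=(d-1)(H_0+\cdots+H_{N-1})$ and $f(H_i)=H_i$, so $\operatorname{Supp}(f^k(C_f))=H_0\cup\cdots\cup H_{N-1}$ for every $k$ and the containment holds, with equality.

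For the moduli-space statement, note that conjugating $f$ by $\phi\in\PGL_{N+1}$ carries $C_f$ and each $f^k_*(C_f)$ to their $\phi$-images, so the condition $\operatorname{Supp}(f^m(C_f))\subseteq\operatorname{Supp}(f^n(C_f))$ is invariant under the conjugation action; consequently the preimage under $q\colon\pow{N}{d}\to\powm{N}{d}$ of the locus in $\powm{N}{d}$ on which the condition holds is precisely $C_{n,m}$. By the preceding lemma $q$ is finite, hence closed, so $\mathsf{C}_{n,m}:=q(C_{n,m})$ is a closed subscheme of $\powm{N}{d}$; it is non-empty because $C_{n,m}$ is, and it is exactly the desired locus since every point of $\powm{N}{d}$ lifts to $\pow{N}{d}$. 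The one delicate step in all of this is the nowhere-vanishing of the forms $G_k$ --- it is what makes $\mu$ fibrewise injective and hence $Z(\bar{s})$ genuinely closed rather than merely constructible --- and it is precisely there that the restriction on the characteristic enters, via non-degeneracy of the Jacobian.
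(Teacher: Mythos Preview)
Your argument is correct, and it takes a genuinely different route from the paper's.

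The paper works geometrically in the space of divisors: it sets $V(N,t)\cong\PP^{\binom{N+t}{t}-1}$, observes that for each choice of combinatorial data $(\mathbf{t},\mathbf{s})$ the map $(D_1,\dots,D_r)\mapsto(\sum s_{1,j}D_j,\sum s_{2,j}D_j)$ is a morphism into $V(N,d_1)\times V(N,d_2)$, and takes the union $Y_{d_1,d_2}$ of all these images as the closed ``support-containment'' locus; then $C_{n,m}$ is the preimage of $Y_{d_1,d_2}$ under the morphism $f\mapsto(f^n_*(C_f),f^m_*(C_f))$. Your approach instead stays inside commutative algebra: you translate the support containment into the divisibility $G_m\mid G_n^{\delta_m}$, encode that as the vanishing of the image of $G_n^{\delta_m}$ in $\operatorname{coker}(\cdot G_m)$, and use the fibrewise injectivity of multiplication by $G_m$ (guaranteed by your nowhere-vanishing observation) to see that this cokernel is finite projective, so the vanishing locus of a section is honestly closed. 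Your argument makes explicit precisely where the characteristic restriction enters --- the unit $d^N$ in the leading coefficient of $J_f$ --- whereas in the paper this is implicit in the assertion that $f\mapsto f^k_*(C_f)$ lands in $V(N,\delta_k)$. The paper's approach has the virtue that the closed set $Y_{d_1,d_2}$ is universal (it depends only on the two degrees, not on $f$); yours has the virtue of being self-contained and avoiding the mild appeal to projectivity of the images $X_{\mathbf{t},\mathbf{s}}$ needed to see that their finite union is closed.

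For non-emptiness, the paper embeds a univariate $F_c(z)=z^d+c$ diagonally and chooses $c$ with $F_c^m(0)=F_c^n(0)$, producing a point that (in general) lies in $C_{n,m}$ but not in $C_{n',m'}$ for smaller indices. Your choice of the power map is simply the case $c=0$ of this construction; it lies in every $C_{n,m}$ simultaneously, which is entirely sufficient for the lemma as stated.
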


\begin{proof}
For any $N$ and $t\geq 1$, we let $V(N, t)\cong \PP^{\binom{N+t}{t}-1}$ denote the space of divisors of degree $t$ on $\PP^N$, parametrized by the coefficients of the defining forms. Note that, for any $s_1, ..., s_r$, the map
\[V(N, t_1)\times \cdots \times V(N, t_r)\to V\left(N, \sum s_it_i\right)\]
corresponding to the operation $(D_1, ..., D_r)\mapsto \sum s_i D_i$ is a morphism. Now fix $d_1, d_2$, and for any data $t_1, ..., t_r>0$, $s_{1, 1}, ..., s_{1, r}>0$, and $s_{2, 1}, ..., s_{2, r}\geq 0$ satisfying $\sum s_{i, j}t_j=d_i$, let
\[V(N, t_1)\times \cdots \times V(N, t_r)\to X_{\textbf{t}, \textbf{s}}\subseteq V(N, d_1)\times V(N, d_2)\]
denote the image of the above-described map. Then $Y_{d_1, d_2}\subseteq V(N, d_1)\times V(N, d_2)$, the union of $X_{\mathbf{t}, \mathbf{s}}$ over all appropriate data, is a Zariski closed subset, and a pair of divisors $(D_1, D_2)$ correspond to a point in $X$ if and only if $\deg(D_i)=d_i$ and $\operatorname{Supp}(D_2)\subseteq \operatorname{Supp}(D_1)$.

If $m>n\geq 0$, let $d_1=d^{n(N-1)}(d-1)$ and $d_2=d^{m(N-1)}(d-1)$. Using the Macaulay resultant, we have a map
\[\pow{N}{d}\to V\left(N, d_1\right)\times V\left(N, d_2\right)\]
defined by $f\mapsto (f^n_*(C_f), f^m_*(C_f))$ for each $n, m$. We let $C_{n, m}$ denote the inverse image of $Y_{d_1, d_2}$ under this map, so that $f\in C_{n, m}$ if and only if $\operatorname{Supp}(f^m(C_f))\subseteq \operatorname{Supp}(f^n(C_f))$. The subscheme $\mathsf{C}_{m, n}$ is the image of $C_{m, n}$ under the (finite) map $\pow{N}{d}\to\powm{N}{d}$.

To show that each of these is non-empty, let $F_c(z)=z^d+c$, and define
\[f[x_0:\cdots :x_N]=\left[x_N^dF_c(x_0/x_N):\cdots :x_N^dF_c(x_{N-1}/x_N):x_N^d\right].\]
A simple calculation shows that $f\in C_{m, n}$ if $F_c^m(0)=F_c^n(0)$, and it is well-known that there are solutions to this over $\CC$ for any $m>n\geq 0$.
\end{proof}

Note that if $N_1+\cdots N_r=N$, then there is a natural map \[\pow{N_1}{d}\times\cdots\times \pow{N_r}{d}\to\pow{N}{d}\] which we refer to as the direct product of monic polynomial maps (the same construction exists for regular polynomial endomoprhisms in general). We have shown that each $C_{n, m}$ is non-empty by exhibiting in it a direct product of univariate polynomials. This is somewhat unsatisfactory, and so we will point out that the PCF locus also contains
\[f(x_0, ..., x_N)=[x_0^d+\alpha_0x_{N-1}^{d-1}x_N:\cdots x_{N-2}+\alpha_{N-2}x_{N-1}^{d-1}x_N:x_{N-1}^d:x_N^d],\]
which is not a direct product of polynomial in fewer variables, whenever $z^d+\alpha_i$ is PCF for each $i$. 
One can verify this simply by noting that the critical locus of this map is the sum of the coordinate axes, while the direct image of 
$x_i^d-\gamma^d x_{N-1}^{d-1}x_N=0$ is $x_i^d-(\gamma^d+\alpha_i)^d x_{N-1}^{d-1}x_N=0$.
Though not a direct product, this polynomial is a skew product in the sense of \cite{skew, skewj}.


\section{Non-archimedean places}\label{sec:local}

We let $K$ be any algebraically closed field complete with respect to a non-trivial non-archimedean absolute value $|\cdot|$ associated to a valuation $v$. We will always assume that $\operatorname{char}(K)=0$ or $\operatorname{char}(K)>d$, where $d\geq 2$ is the fixed degree of the morphisms $f:\PP^N\to\PP^N$ under consideration, although the residual characteristic of $K$ is allowed to be arbitrary. In what follows, we will say that the valuation $v$ is \emph{$p$-adic} if and only if $|p|<1$. Given our assumptions on characteristic, this occurs just in case $K$ has characteristic $0$ and the valuation extends the $p$-adic valuation on $\QQ\subseteq K$, suitably normalized.

We begin by describing what will be the local contribution to our naive height on divisors.
If $D\in\Div^*(\PP^N)$ we let $F_D(x_0, ..., x_N)$ be the unique form defining $D$ which satsifies $F_D(x_0, ..., x_{N-1}, 0)=\prod_{i=0}^{N-1} x_i^{e_i}$ for some $e_i\geq 0$, and we set
\[\lambda_v(D)=\log^+\sup_{|\beta_0|=|\beta_1|=\cdots=|\beta_{N-1}|=1}\max\left\{|\beta_N|^{-1}:F(\beta_0, ..., \beta_N)=0\right\}.\]
For a given $f\in \pow{N}{d}(K)$ written as in \eqref{eq:genericpoly} we will also define
\[\ph_v(f)=\log^+\max\left\{|a_{i, I}|^{1/I_n}:0\leq i\leq N-1\text{ and } I\in \Ind^*(N, d)\right\}.\]

Our first lemma tells us, among other things, that $\lambda_v$ is indifferent to multiplicities, and hence to the distinction between $f(D)$ and $f_*(D)$.

\begin{lemma}\label{lem:lambdapropsnonarch}
\begin{enumerate}
\item\label{it:linear} For any $D_i\in\Div^*(\PP^N)$ and integers $e_i\geq 1$ we have \[\lambda_v\left(\sum e_iD_i\right)=\max\left\{\lambda_v (D_i)\right\}.\]
\item\label{it:lambdamu} For any $D\in\Div^*(\PP^N)$, if we write
\[F_D(x_0, ..., x_N)=\sum_{i=0}^d c_i(x_0, ..., x_{N-1})x_N^{d-i},\] and if $\|P\|$ denotes the Gauss norm of the polynomial $P$, then we have
\begin{equation}\label{eq:lambdaGauss}\lambda_v(D)=\max_{0<i\leq d} \frac{1}{d-i}\log^+\left(\frac{\|c_i\|}{\|c_d\|}\right).\end{equation}
\end{enumerate}
\end{lemma}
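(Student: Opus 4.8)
The plan is to treat the two parts separately, since part~\eqref{it:linear} is essentially formal and part~\eqref{it:lambdamu} carries the analytic content. For part~\eqref{it:linear}, I would observe that if $D = \sum e_i D_i$ with each $D_i \in \Div^*(\PP^N)$, then the normalized defining form satisfies $F_D = \alpha \prod_i F_{D_i}^{e_i}$ for a unit $\alpha$ (the normalization $F_D(x_0,\dots,x_{N-1},0) = \prod x_j^{e_j'}$ being multiplicative and forcing $|\alpha| = 1$ by comparing Gauss norms of the leading terms, which are units). Then for a fixed tuple $(\beta_0,\dots,\beta_{N-1})$ of unit coordinates, the set of $\beta_N$ with $F_D(\beta_0,\dots,\beta_N)=0$ is exactly the union of the corresponding zero sets for the $F_{D_i}$, so the inner $\max$ over $|\beta_N|^{-1}$ is the max of the individual maxima; taking $\log^+\sup$ over $(\beta_0,\dots,\beta_{N-1})$ and interchanging the two maxima gives the claim. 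The only subtlety is checking that each $D_i$ genuinely lies in $\Div^*(\PP^N)$ so that its normalized form exists, but this was already noted in the text (a summand of a $\Div^*$ divisor is again in $\Div^*$).

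For part~\eqref{it:lambdamu}, write $F_D = \sum_{i=0}^d c_i(x_0,\dots,x_{N-1}) x_N^{d-i}$ with $c_d$ the (unit-coefficient) monomial $\prod x_j^{e_j}$, so $\|c_d\| = 1$ after normalization, and the asserted formula becomes $\lambda_v(D) = \max_{0<i\le d} \tfrac{1}{d-i}\log^+\|c_i\|$. Fix unit coordinates $\beta_0,\dots,\beta_{N-1}$ and set $\gamma_i = c_i(\beta_0,\dots,\beta_{N-1})$, so $|\gamma_i| \le \|c_i\|$ always, and $|\gamma_d| = 1$. The roots $\beta_N$ of $\sum \gamma_i \beta_N^{d-i} = 0$ have reciprocals controlled by the Newton polygon of this one-variable polynomial: the maximum of $|\beta_N|^{-1}$ over roots equals $\max_{0<i\le d}(|\gamma_i|/|\gamma_d|)^{1/(d-i)} = \max_{0<i\le d}|\gamma_i|^{1/(d-i)}$ (reading slopes of the lower convex hull, with the convention that only indices achieving the relevant vertices contribute, but the $\max$ expression is still an upper bound in general and is attained). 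This gives $\le \max_i \|c_i\|^{1/(d-i)}$, hence after $\log^+$ the inequality $\lambda_v(D) \le \max_i \tfrac{1}{d-i}\log^+\|c_i\|$. For the reverse inequality I would, for each index $i$ achieving the maximum, choose unit coordinates $\beta_0,\dots,\beta_{N-1}$ at which $|c_i(\beta_0,\dots,\beta_{N-1})| = \|c_i\|$; such a choice exists because over an algebraically closed non-archimedean field the Gauss norm of a form is the supremum of its values on unit-coordinate tuples (a standard fact, provable by reduction mod the maximal ideal on the residue field, which is infinite). For that tuple the Newton polygon of the univariate polynomial $\sum \gamma_j \beta_N^{d-j}$ has a vertex forcing a root with $|\beta_N|^{-1} \ge |\gamma_i|^{1/(d-i)} = \|c_i\|^{1/(d-i)}$ — here one must be a little careful, since other coefficients $\gamma_j$ might be even larger and dominate the polygon, so instead I would argue via the product formula for the constant-to-leading coefficient ratio: $\prod_{\text{roots}} \beta_N^{-1} = \pm\gamma_d/\gamma_0$ has absolute value $|\gamma_0| \le \|c_0\|$, and more robustly, the sum of slopes matches, so at least one root achieves the top slope $\ge \max_j |\gamma_j|^{1/(d-j)}$ when $\gamma_d$ is a unit. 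Combining, $\lambda_v(D) \ge \tfrac{1}{d-i}\log^+\|c_i\|$ for the maximizing $i$, completing the proof.

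The main obstacle I anticipate is the reverse inequality in~\eqref{it:lambdamu}: matching the abstract supremum defining $\lambda_v$ (a sup over a continuum of unit-coordinate tuples of a max over finitely many roots) with the clean Gauss-norm formula requires (a) the fact that the Gauss norm of a multivariable form is realized, or at least approached, by evaluation at unit-coordinate points — which needs the residue field to be infinite, true since $K$ is algebraically closed — and (b) a careful Newton-polygon argument showing that making one coefficient as large as possible actually produces a correspondingly small root, rather than having that slope ``hidden'' beneath a steeper segment of the polygon coming from some other large coefficient. I would handle (b) by noting that since $\gamma_d$ is always a unit, the rightmost segment of the Newton polygon from the leading term down to whichever lower term is extremal has slope exactly $\max_{0<j\le d}\tfrac{1}{d-j}\log^+|\gamma_j|$, so the largest $|\beta_N|^{-1}$ among the roots is precisely this quantity — and then optimizing over $(\beta_0,\dots,\beta_{N-1})$ reduces to optimizing each $|\gamma_j| = |c_j(\beta_0,\dots,\beta_{N-1})|$ separately up to the $\log^+$, which is where (a) is invoked. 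Everything else is bookkeeping with the $\log^+ \sup \max$ and the multiplicativity of Gauss norms.
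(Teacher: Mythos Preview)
Your proposal is correct and follows essentially the same route as the paper: part~\eqref{it:linear} is immediate from the definition (you simply spell out the multiplicativity of the normalized form and the union of zero sets), and part~\eqref{it:lambdamu} is exactly the paper's combination of the Newton polygon for the one-variable specialization together with the fact that the Gauss norm of each $c_i$ is attained at a unit-coordinate point. Your worry in (b) about a steeper segment ``hiding'' the relevant slope is unnecessary in the direction you need it---since $|\gamma_d|=1$, the largest root of the reciprocal polynomial is exactly $\max_j |\gamma_j|^{1/(d-j)}$, which already dominates $\|c_{i^*}\|^{1/(d-i^*)}$ once you have chosen $\beta$ to realize $\|c_{i^*}\|$---but this only makes your argument cleaner, not different.
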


\begin{proof}
Property~\ref{it:linear} is immediate from the definition, while property~\ref{it:lambdamu} follows from standard properties of the Gauss norm and Newton Polygons. Specifically, let $i$ be an index which obtains the maximum in~\eqref{eq:lambdaGauss}. Then  $\|c_i\|$ is precisely the supremum of $c_i(x_0, ..., x_{N-1})$ on the unit multi-disk, which is realized at some point $(\beta_0, ..., \beta_{N-1})$ with $|\beta_i|=1$ for all $i$. Now, the quantity on the right of \eqref{eq:lambdaGauss} is the size of the largest root of the reciprocal polynomial to $F_D(\beta_0, ..., \beta_{N-1}, x_N)$, showing that $\lambda_v(D)$ is at least as large at this quantity. On the other hand, it follows from the definition of $\lambda_v(D)$ and the theory of Newton Polygons that
\begin{equation}\label{eq:lambdaGauss2}\lambda_v(D)=\sup_{|\beta_0|=|\beta_1|=\cdots=|\beta_{N-1}|=1}\max_{0<i\leq d} \frac{1}{d-i}\log^+\left(\frac{|c_i(\beta_0, ..., \beta_{N-1})|}{|c_d(\beta_0, ..., \beta_{N-1})|}\right).\end{equation}
But, given our normalization of $F_D$, we have $|c_d(\beta_0, ..., \beta_{N-1})|=1$ when $|\beta_0|=\cdots=|\beta_{N-1}|=1$, and so the right side of \eqref{eq:lambdaGauss2} is bounded above by the right side of \eqref{eq:lambdaGauss}.
\end{proof}

\begin{lemma}\label{lem:localtransform}
Let $f\in \pow{N}{d}(K)$, and let $D\in\Div^*(\PP^N)$.
\begin{enumerate}
\item $\lambda_v(f_*(D))\leq d\max\{\ph_v(f), \lambda_v(D)\}$
\item If $\lambda_v(D)>\ph(f)$, then we have
\[\lambda_v(f_*(D))=d\lambda_v(D).\]
\end{enumerate}
\end{lemma}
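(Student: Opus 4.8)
The plan is to argue geometrically, using the observation recorded above that over the algebraically closed field $K$ the $K$-points of $\operatorname{Supp}(f_*(D))$ are exactly the $f$-images of the $K$-points of $\operatorname{Supp}(D)$, together with a one-line estimate for $f$ itself. Write $\mu=e^{\ph_v(f)}=\max\{1,\ \max_{i,I}|a_{i,I}|^{1/I_N}\}$, so that $|a_{i,I}|\le\mu^{I_N}$ for every relevant $i$ and $I$, and put $\rho=e^{\lambda_v(D)}\ge 1$. For $z=(z_0,\ldots,z_{N-1})\in\AA^N(K)$ with $R=\max_j|z_j|$, the affine restriction $f_i(z)=z_i^d+\sum_{I\in\Ind^*(N,d)}a_{i,I}z_0^{I_0}\cdots z_{N-1}^{I_{N-1}}$ has every auxiliary monomial of $z$-degree $d-I_N\in\{1,\ldots,d-1\}$, so the ultrametric inequality gives immediately: if $R\le\mu$, then $|f_i(z)|\le\mu^d$ for all $i<N$; and if $R>\mu$, then each auxiliary monomial has absolute value at most $\mu^{I_N}R^{d-I_N}<R^{d}$, so $|f_i(z)|\le R^d$ for all $i<N$, with equality precisely when $|z_i|=R$.

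For part (1), let $P=[\beta_0:\cdots:\beta_N]\in\operatorname{Supp}(f_*(D))$ be normalised so that $|\beta_j|=1$ for $j<N$; I may assume $T:=|\beta_N|^{-1}>1$, since otherwise $P$ contributes nothing to $\lambda_v(f_*(D))$. Because $f_*(D)\in\Div^*(\PP^N)$ and the $\beta_j$ ($j<N$) are units, $P\notin H$, so $P=f(Q)$ for some $Q\in\operatorname{Supp}(D)$ with $Q\notin H$; writing $Q=(z_0,\ldots,z_{N-1},1)$ affinely, the identity $P=[f_0(z):\cdots:f_{N-1}(z):1]$ forces $|f_i(z)|=T$ for all $i<N$, so all the $|f_i(z)|$ coincide. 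If $R=\max_j|z_j|\le\mu$, the estimate above gives $T\le\mu^d$. If instead $R>\mu$, then choosing $i<N$ with $|z_i|=R$ gives $T=R^d$, while any $i$ with $|z_i|<R$ would force $|f_i(z)|<R^d=T$, a contradiction; hence $|z_i|=R$ for \emph{every} $i<N$. In that case $Q$ has a projective representative all of whose first $N$ coordinates have absolute value $1$ and whose last coordinate has absolute value $R^{-1}$, so $R\le\rho$ by the definition of $\lambda_v(D)$, and therefore $T=R^d\le\rho^d$. In every case $T\le\max(\mu,\rho)^d$; taking the supremum over all such $P$ and applying $\log^+$ yields $\lambda_v(f_*(D))\le d\max\{\ph_v(f),\lambda_v(D)\}$.

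For part (2), suppose $\lambda_v(D)>\ph_v(f)$, i.e.\ $\rho>\mu$; then $\lambda_v(f_*(D))\le d\lambda_v(D)$ is immediate from part (1). For the reverse inequality, the second part of Lemma~\ref{lem:lambdapropsnonarch} (or just the fact that over an algebraically closed field a Gauss norm is attained on the unit polytorus) shows that the supremum defining $\lambda_v(D)$ is attained: there is $Q\in\operatorname{Supp}(D)$ with $|\beta_j|=1$ for $j<N$ and $|\beta_N|=\rho^{-1}$. Affinely this is a point $z$ with $|z_j|=\rho$ for all $j<N$, so $R=\rho>\mu$ and the estimate above gives $|f_i(z)|=\rho^{d}$ for all $i<N$. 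Hence $P=f(Q)\in\operatorname{Supp}(f_*(D))$ has a representative whose first $N$ coordinates have absolute value $1$ and whose last coordinate has absolute value $\rho^{-d}$, so $\lambda_v(f_*(D))\ge d\log\rho=d\lambda_v(D)$, and equality holds.

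The only genuinely delicate point is the step in part (1) with $R>\mu$: one must observe that the constraint ``all the $|f_i(z)|$ are equal'' forces all the $|z_i|$ to be equal. This matters because $\lambda_v(D)$ only detects points of $D$ whose first $N$ coordinates have \emph{equal} absolute value, and a cruder bound in terms of $\max_j|z_j|$ alone would not be controlled by $\lambda_v(D)$ — already $D=\{x_0x_1=x_2^2\}\subseteq\PP^2$ with $f$ the squaring map has $\lambda_v(D)=0$ yet contains points arbitrarily close to $H$. Everything else is routine manipulation of the ultrametric inequality.
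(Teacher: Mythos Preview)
Your proof is correct and follows essentially the same route as the paper's: lift a point of $f_*(D)$ to a preimage on $D$, use the ultrametric inequality to show that (in the interesting regime) all of the first $N$ coordinates of the preimage have equal absolute value, and conclude that this preimage is a valid witness for $\lambda_v(D)$. The only differences are cosmetic---you work in affine coordinates and argue directly, while the paper keeps homogeneous coordinates and argues by contradiction---and your identification of the ``all $|z_i|$ equal'' step as the crux is exactly right.
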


\begin{proof}
First note that if $-\log|\beta_N|>\ph_v(f)$, then we have $|\beta_N^{-1}|>|a_{i, I}|^{1/I_N}$ for each coefficient $a_{i, I}$ of $f$, whence $|a_{i, I}\beta_N^{I_N}|<1$. It follows that for any $\beta_0, ..., \beta_{N-1}\in K$, and for each $i$, if we set $\|\beta\|=\max\{|\beta_0|, ..., |\beta_{N-1}|\}$ then we have
\begin{equation}\label{eq:ultrametric}\left|\sum_{I_N\neq 0} a_{i, I}\beta_0^{I_0}\cdots\beta_N^{I_N}\right|<\|\beta\|^\kappa,\end{equation}
where $\kappa=d-1$ if $\|\beta\|\geq 1$, and $\kappa=1$ otherwise.

Now suppose that $\lambda_v(f_*(D))> d\max\{\ph_v(f), \lambda_v(D)\}$, so that there exist values $\alpha_0, ..., \alpha_N\in K$ with $[\alpha_0:\cdots :\alpha_N]\in f_*(D)$ such that $|\alpha_0|=\cdots=|\alpha_{N-1}|=1$, and $-\log|\alpha_N|=\lambda_v(f_*(D))>-d\max\{\ph_v(f), \lambda_v(D)\}$. If we choose $\beta_0, ..., \beta_N\in K$ with $f_i(\beta_0, ..., \beta_N)=\alpha_i$, and $\beta_N^d=\alpha_N$,  then
\[-\log|\beta_N|=\frac{-1}{d}\log|\alpha_N|>\max\{\ph_v(f), \lambda_v(D)\}.\] It follows that
\begin{equation}\label{eq:moreultra}\left|\alpha_i-\beta_i^d\right|=\left|\sum_{I_N\neq 0} a_{i, I}\beta_0^{I_0}\cdots\beta_N^{I_N}\right|<\max\left\{\|\beta\|, \|\beta\|^{d-1}\right\},\end{equation}
for each $0\leq i<N$. If $\|\beta\|> 1$, then we may choose $i$ with $|\beta_i|> 1$, and apply \eqref{eq:moreultra} to contradict the fact that $|\alpha_i|=1$.  So it must be that $\|\beta\|\leq 1$, and hence \eqref{eq:moreultra} implies that for each $i$ we have $|\alpha_i-\beta_i^d|<1$. But this is a contradiction if $|\beta_i|<1$ for any $i$, so in fact we have $|\beta_i|=1$ for all $i$. In other words, the point $[\beta_0:\cdots :\beta_N]$ witnesses
\[\lambda_v(D)\geq -\log|\beta_N|>\max\{\ph_v(f), \lambda_v(D)\}.\]
This is clearly a contradiction, and so the first claim is established.

It remains to show that if $\lambda_v(D)>\ph_v(f)$, then $\lambda_v(f_*(D))\geq d\lambda_v(D)$, so suppose that the hypothesis obtains. Then we have some values $\beta_0, ..., \beta_N\in K$ with $|\beta_0|=\cdots= |\beta_{N-1}|=1$ and $-\log|\beta_N|>\ph_v(f)$. Applying \eqref{eq:ultrametric}, we see that
\[\left|f_i(\beta_0, ..., \beta_N)\right|=\left|\beta_i^d+\sum_{I_N\neq 0} a_{i, I}\beta_0^{I_0}\cdots\beta_{N-1}^{I_{N-1}}\beta_N^{I_N}\right|=1\]
for each $i$. In other words, the point $[\alpha_0:\cdots :\alpha_{N}]\in f_*(D)$ defined by $\alpha_i=f_i(\beta_0, ..., \beta_N)$ and $\alpha_N=\beta_N^d$ 
 witnesses $\lambda_v(f_*(D))\geq d\lambda_v(D)$.
 \end{proof}

Lemma~\ref{lem:localtransform} is what enables us to define a local canonical height on divisors.
Let $f\in\pow{N}{d}(K)$ and let $D\in\Div^*_{K}(\PP^N)$. We will define a function $G_{f, v}$ by
\[G_{f, v}(D)=\lim_{n\to\infty} d^{-n}\lambda_v\left(f^n_*(D)\right).\]
Note that, in the case $N=1$, this definition reduces to the maximum value,  on the support of $D$, of the usual $v$-adic Green's function associated to $f$.

\begin{lemma}\label{lem:nonarchgreensprops}
\begin{enumerate}
\item\label{it:nongreensdefined} The function $G_{f, v}$ presented above is always defined.
\item\label{it:nongreenstransform} For any $D\in\Div^*_K(\PP^N)$, $G_{f, v}(f_*(D))=dG_{f, v}(D)$.
\item\label{it:nongreensper} If $D\in\Div^*_K(\PP^N)$ is preperiodic for $f\in \pow{N}{d}(K)$, then $G_{f, v}(D)=0$.
\item\label{it:nongreensbott} For any divisor $D\in\Div^*_K(\PP^N)$ satisfying $\lambda_v(D)>\ph_v(f)$, we have
\[G_{f, v}(D)=\lambda_v(D).\]
\item\label{it:nongreensasym} For any  $D\in\Div^*_K(\PP^N)$, we have
\[G_{f, v}(D)=\lambda_v(D)+O(\ph_v(f)),\]
where the implied constant is at most 2.
\end{enumerate}
\end{lemma}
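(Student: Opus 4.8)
The plan is to derive all five assertions from the two estimates in Lemma~\ref{lem:localtransform}, together with the fact (Lemma~\ref{lem:lambdapropsnonarch}\eqref{it:linear}) that $\lambda_v$ is insensitive to multiplicities, so that $\lambda_v$ of an effective divisor in $\Div^*(\PP^N)$ depends only on its support. Throughout, abbreviate $\ell_n = \lambda_v(f^n_*(D))$, and recall $\ell_n \ge 0$ since $\lambda_v$ is defined by a $\log^+$.

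First I would establish \eqref{it:nongreensdefined} and \eqref{it:nongreensbott} together, via a dichotomy on the sequence $(\ell_n)$. If $\ell_n \le \ph_v(f)$ for every $n$, then the first part of Lemma~\ref{lem:localtransform} gives $\ell_{n+1} \le d\,\ph_v(f)$, so $d^{-n}\ell_n \to 0$ and the defining limit exists (and equals $0$). Otherwise, let $n_0$ be least with $\ell_{n_0} > \ph_v(f)$; since $d \ge 2$, the second part of Lemma~\ref{lem:localtransform} and a trivial induction show $\ell_n = d^{\,n-n_0}\ell_{n_0} > \ph_v(f)$ for all $n \ge n_0$, so $d^{-n}\ell_n$ is constant for $n \ge n_0$ and the limit again exists. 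When $\lambda_v(D) > \ph_v(f)$ one has $n_0 = 0$, giving $G_{f,v}(D) = \lambda_v(D)$, which is \eqref{it:nongreensbott}.

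The functional equation \eqref{it:nongreenstransform} is then a formal reindexing: since $\ell_{n+1} = \lambda_v\!\big(f^n_*(f_*(D))\big)$, we get $G_{f,v}(f_*(D)) = \lim_n d^{-n}\ell_{n+1} = d\,G_{f,v}(D)$. For \eqref{it:nongreensper}, if $D$ is preperiodic then the supports $\operatorname{Supp}(f^n_*(D))$ run through a finite set of divisors, so $(\ell_n)$ is bounded and $d^{-n}\ell_n \to 0$. For the comparison \eqref{it:nongreensasym}, I would iterate the first part of Lemma~\ref{lem:localtransform} to obtain $\ell_n \le d^n\lambda_v(D) + (d + d^2 + \cdots + d^n)\ph_v(f)$; dividing by $d^n$, using $\frac{d}{d-1} \le 2$, and letting $n \to \infty$ yields $G_{f,v}(D) \le \lambda_v(D) + 2\ph_v(f)$. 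For the matching lower bound: if $\lambda_v(D) > \ph_v(f)$ then equality holds by \eqref{it:nongreensbott}, while if $\lambda_v(D) \le \ph_v(f)$ then $G_{f,v}(D) \ge 0 \ge \lambda_v(D) - \ph_v(f)$ for free; either way $|G_{f,v}(D) - \lambda_v(D)| \le 2\ph_v(f)$.

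I do not expect a real obstacle here: the only point that needs care is the dichotomy in the second paragraph, specifically the observation that once $\ell_n$ exceeds $\ph_v(f)$ it is multiplied by \emph{exactly} $d$ at every later step, so the normalized sequence stabilizes rather than merely converging; everything else is routine geometric-series estimation.
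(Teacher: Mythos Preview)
Your proposal is correct and follows essentially the same approach as the paper: both hinge on the dichotomy coming from Lemma~\ref{lem:localtransform} (either $\ell_n\le\ph_v(f)$ forever, or once it exceeds $\ph_v(f)$ it is multiplied exactly by $d$ thereafter), and both derive \eqref{it:nongreenstransform}, \eqref{it:nongreensper} from the definition together with Lemma~\ref{lem:lambdapropsnonarch}\eqref{it:linear}. The only minor difference is in \eqref{it:nongreensasym}: the paper argues the upper bound via the same first-exit-time $m$ (noting $\ell_m\le d\,\ph_v(f)$ since $\ell_{m-1}\le\ph_v(f)$, whence $G_{f,v}(D)=d^{-m}\ell_m\le d^{1-m}\ph_v(f)$), whereas you bound $\max\{\ph_v(f),\ell_n\}\le \ph_v(f)+\ell_n$ and sum the resulting geometric series; both routes yield the constant $2$.
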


\begin{proof} 
Property~\eqref{it:nongreensbott} follows from Lemma~\ref{lem:localtransform}. Specifically, if $\lambda_v(D)>\ph_v(f)$, then \[\lambda_v(f_*(D))=d\lambda_v(D)>\lambda_v(D)>\ph_v(f).\] Iterating this, we see that $\lambda_v(f^n_*(D))=d^n\lambda_v(D)$, whence follows the equality.

Property~\eqref{it:nongreensdefined} follows from property~\eqref{it:nongreensbott} if $\lambda_v(f^n_*(D))>\ph_v(f)$ for any $n$, while if this does not occur the definition gives $G_{f, v}(D)=0$. The definition of $G_{f,v}$ immediately gives property~\eqref{it:nongreenstransform}, once we know that the limit exists, while item~\eqref{it:linear} of Lemma~\ref{lem:lambdapropsnonarch} implies property~\eqref{it:nongreensper}. Specifically, it follows from Lemma~\ref{lem:lambdapropsnonarch} and the linearity of $f_*$ that \[G_{f, v}\left(\sum e_iD_i\right)= \max\{G_{f, v}(D_i)\}.\]
In particular, the values of $G_{f, v}$ are bounded on any preperiodic orbit, and hence by property~\eqref{it:nongreenstransform}
must vanish.

To prove property~\eqref{it:nongreensasym}, suppose first that $\lambda_v(f^m_*(D))\leq \ph_v(f)$ for all $m$. Then we have $G_f(D)=0$ and $\lambda_v(D)\leq\ph_v(D)$, implying the inequality. If, on the other hand, there is some $m\geq 0$ such that $\lambda(f^m_*(D))>\ph_v(f)$, then let $m$ be the least such value. In light of property~\eqref{it:nongreensbott}, we may as well assume that $m\geq 1$. We have
\[G_{f, v}(D)=d^{-m}G_{f, v}(f^m_*(D))=d^{-m}\lambda_v(f^m_*(D)),\] and since $\lambda_v(f^{m-1}_*(D))\leq \ph_v(f)$, we have by Lemma~\ref{lem:localtransform} that
$\lambda_v(f^m_*(D))\leq d\ph_v(f)$. It follows that
\[\left|\lambda_v(D)-G_{f, v}(D)\right|=\left|\lambda_v(D)-d^{-m}\lambda_v(f^m(D))\right|\leq \left(1+d^{1-m}\right)\ph_v(f)\leq 2\ph_v(f).\]
\end{proof}

Our next lemma estimates the value $\lambda_v(f_*(C_f))$. In some sense, the lemma is entirely standard given what we have already shown. Specifically, we have shown that the construction $f\mapsto f_*(C_f)$ is represented by rational function from one weighted projective space to another, which turns out by Lemma~\ref{lem:crux} to be a morphism of degree $d$. Since $\ph_v$ is the local height on the domain, and $\lambda_v$ is the local height on the range, one should expect that $\lambda_v(f_*(C_f))=d\ph_v(f)+O(1)$. Indeed, this is what the next lemma shows, and although the argument is standard, its application in a weighted projective space is possibly less familiar. For convenience of the reader we have included a proof.

\begin{lemma}\label{lem:localnull}
For any $f\in \pow{N}{d}(K)$ we have
\[\lambda_v(f_*(C_f))=d\ph_v(f)+O(1),\]
where the implied constant is absolute. Moreover, the implied constant vanishes unless $v$ is $p$-adic, for some $p\leq d$.
\end{lemma}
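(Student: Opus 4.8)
The plan is to use the explicit Nullstellensatz relation of Lemma~\ref{lem:nullstellensatz} together with the coefficient description \eqref{eq:branchdefinition} of the form $\Res(J_f, f)$, viewing $\lambda_v$ through the Gauss-norm formula \eqref{eq:lambdaGauss} of Lemma~\ref{lem:lambdapropsnonarch}\eqref{it:lambdamu}. Write $G = \Res(J_f, f)$ in the form $\prod_{i=0}^{N-1} y_i^{d^{N-1}(d-1)} + \sum_{J_N \ne 0} b_J(f)\, \mathbf{y}^J$. By Lemma~\ref{lem:lambdapropsnonarch}\eqref{it:lambdamu}, collecting the terms by the power of $y_N$ and noting that the leading coefficient $c_d$ has Gauss norm $1$ (it is the monomial $\prod_{i<N} y_i^{d^{N-1}(d-1)}$, whose coefficient is a unit), we get
\[
\lambda_v(f_*(C_f)) = \max_{J_N > 0} \frac{1}{J_N}\, \log^+ |b_J(f)|,
\]
since the Gauss norm of the degree-$J_N$-in-$y_N$ part is $\max$ over the relevant $|b_J(f)|$.

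\textbf{Upper bound.} First I would establish $\lambda_v(f_*(C_f)) \le d\,\ph_v(f) + O(1)$. Raise the relation $\lambda_v(f_*(C_f)) = \max_J J_N^{-1} \log^+|b_J(f)|$ to the task of bounding each $|b_J(f)|^{1/J_N}$. By the lemma following \eqref{eq:branchdefinition}, $b_J \in A$ is homogeneous of weight $dJ_N$ in the grading on $A$; expanding $b_J$ as a polynomial in the $a_{i,I}$, every monomial appearing has total weight $dJ_N$, i.e. is a product of $a_{i,I}$'s whose weights $I_N$ sum to $dJ_N$. The ultrametric inequality then gives $|b_J(f)| \le C_J \max \prod |a_{i,I}|$ over such monomials, and since $\sum I_N = dJ_N$ we get $|b_J(f)|^{1/J_N} \le C_J' \left(\max_{i,I} |a_{i,I}|^{1/I_N}\right)^{d} = C_J'\, e^{d\,\ph_v(f)}$ (using $\log^+$ and that the $C_J$ depend only on the integer coefficients of $b_J$, hence are $1$ for non-$p$-adic $v$ with $p \le d$, and otherwise bounded in terms of $d$, $N$ alone). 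Taking $\log^+$ and the max over $J$ yields the upper bound with the stated absolute implied constant, vanishing off the $p$-adic-with-$p \le d$ case.

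\textbf{Lower bound.} For the reverse inequality $\ph_v(f) \le d^{-1}\lambda_v(f_*(C_f)) + O(1)$, I would invoke Lemma~\ref{lem:nullstellensatz}: there is an $e$ divisible by $(d-1)!$ and homogeneous $g_{i,I,J} \in A$ of degree $e - d!$ with $a_{i,I}^{e/I_N} = \sum_J b_J^{(d-1)!/J_N} g_{i,I,J}$. Evaluating at $f$, the ultrametric inequality gives
\[
|a_{i,I}(f)|^{e/I_N} \le \Bigl(\max_J |b_J(f)|^{(d-1)!/J_N}\Bigr)\cdot \max_J |g_{i,I,J}(f)|.
\]
Here the subtlety is controlling $\max_J |g_{i,I,J}(f)|$: the $g_{i,I,J}$ are fixed polynomials in the $a_{i,I}$, homogeneous of degree $e - d!$, so each monomial is a product of $a$'s with weights summing to $e - d!$, giving $|g_{i,I,J}(f)| \le C'' \max_{i,I}|a_{i,I}(f)|^{(e-d!)/d}$ or so (bounding weight-$1$ factors by the max); combined with $\max_J |b_J(f)|^{(d-1)!/J_N} = e^{(d-1)!\,\lambda_v(f_*(C_f))}$ from the Gauss-norm formula, one takes $e$-th roots, rearranges, and after absorbing the $|g|$-contribution (which is sublinear in $\ph_v(f)$ with exponent $(e-d!)/e < 1$, hence eventually dominated) obtains $\ph_v(f) \le d^{-1}\lambda_v(f_*(C_f)) + O(1)$ with the constant again absolute and vanishing off the bad case.

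\textbf{Main obstacle.} The delicate point is the bookkeeping in the lower bound: one must check that the ``error'' term coming from $\max_J|g_{i,I,J}(f)|$ really is of lower order, i.e. that raising everything to the $e$-th root genuinely separates the linear-in-$\ph_v(f)$ main term from the lower-degree $g$-contribution, rather than the two being comparable. This is handled by noting $g_{i,I,J}$ has degree $e-d! < e$, so its valuative size is at most $\tfrac{e-d!}{?}\ph_v(f)$-ish, strictly sublinear after the $e$-th root, and can be folded into the $O(1)$ — but making this precise, and simultaneously tracking that all implied constants are $1$ unless $v$ is $p$-adic with $p \le d$ (which requires observing that both the coefficients of the $b_J$ and the ability to solve the Nullstellensatz relation with unit coefficients fail only at such primes, exactly as in the proof of Lemma~\ref{lem:nullstellensatz} where $R = \ZZ[\tfrac12,\dots,\tfrac1d]$ forces the bad prime to satisfy $p > d$ — wait, rather: the relation holds over $R$ so denominators involve only primes $\le d$), is where the care is needed.
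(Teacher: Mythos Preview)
Your approach is exactly the paper's: express $\lambda_v(f_*(C_f))$ via the Gauss-norm formula as $\max_J J_N^{-1}\log^+|b_J(f)|$, bound each $b_J$ above using its weighted homogeneity of degree $dJ_N$, and bound below using the Nullstellensatz relation of Lemma~\ref{lem:nullstellensatz}. The upper bound is fine.

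The lower bound is correct in outline but your bookkeeping is muddled in a way that obscures why it works. The bound on $g_{i,I,J}$ should read $\log|g_{i,I,J}(f)|\le (e-d!)\,\ph_v(f)+O(1)$, since $g_{i,I,J}$ is homogeneous of weight $e-d!$ (your exponent ``$(e-d!)/d$'' is a slip). Plugging this and $\max_J|b_J|^{(d-1)!/J_N}\le e^{(d-1)!\,\lambda_v(f_*(C_f))}$ into the ultrametric estimate for $|a_{i,I}|^{e/I_N}$ and taking logs gives
\[
e\,\ph_v(f)\le (d-1)!\,\lambda_v(f_*(C_f))+(e-d!)\,\ph_v(f)+O(1).
\]
The crucial point you are circling around is that the $g$-contribution is \emph{not} lower order or ``eventually dominated'' or ``folded into the $O(1)$'': it is linear in $\ph_v(f)$ with coefficient $e-d!$, and you simply subtract it from the left-hand side to obtain $d!\,\ph_v(f)\le (d-1)!\,\lambda_v(f_*(C_f))+O(1)$, i.e.\ $d\,\ph_v(f)\le \lambda_v(f_*(C_f))+O(1)$. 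This is an exact algebraic cancellation, valid for all $f$, not an asymptotic one; there is no ``for $\ph_v(f)$ large enough'' step here. Once you see this, the vanishing of constants off the $p$-adic-with-$p\le d$ case is immediate: both the $b_J$ and the $g_{i,I,J}$ have coefficients in $R=\ZZ[\tfrac12,\dots,\tfrac1d]$, hence are $v$-integral whenever $v$ is not $p$-adic for $p\le d$, so every $O(1)$ above vanishes in that case.
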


\begin{proof}
Note that for any monomial $c a_{i_1, I_i}\cdots a_{i_r, I_r}\in A=R[a_{i, I}]$, we have
\[|x|_v= |c|_v\cdot |a_{i_1, I_1}|_v\cdots |a_{i_r, I_r}|_v\leq |c|_v\max\{|a_{i_j, I_j}|^{1/I_{j, N}}\}^{I_{1, N}+\cdots +I_{r, N}}\]
and so for any homogeneous element $x\in A$ of degree $w$ we have
\begin{equation}\label{eq:homogupper}\log|x|_v\leq w\log^+\max\left\{|a_{i, I}|^{1/I_N}\right\}=w\ph_v(f)+O(1),\end{equation}
where the implied constant depends on $x$. Note, though, that this constant vanishes if the ring $R$ is contained in the ring of $v$-adic integers, that is, if $v$ is not $p$-adic for $p\leq d$.

Writing the defining equation of $f_*(C_f)$ as in \eqref{eq:branchdefinition}, we have
\[\log|b_J|_v\leq dJ_n\ph_v(f)+O(1),\]
and hence by property~\ref{it:lambdamu} of Lemma~\ref{lem:lambdapropsnonarch}, we have
\[\lambda_v(f_*(C_f))\leq d\ph_v(f)+O(1),\]
where again the constant vanishes unless $v$ is $p$-adic for $p\leq d$.

On the other hand, by Lemma~\ref{lem:nullstellensatz} we have an integer $e$ divisible by $(d-1)!$ and polynomials $g_{i, I, J}\in A$ homogeneous of degree $e-d!$ with respect to the gradation on $A$, such that for each
$0\leq i< N$ and multi-index $|I|=d$ with $I_N\neq 0, d$
 we have
\[a_{i, I}^{e/I_N}=\sum b_{J}^{(d-1)!/J_N}g_{i, I, J},\]
where the sum is over  multi-indices
$J$ with $|J|=d^{N-1}(d-1)N$.
The ultra-metric inequality then gives
\begin{eqnarray*}
e\log|a_{i, I}|^{1/N}&\leq& (d-1)!\log\max\{|b_J|^{1/J_N}\}+\log\max\{|g_{i, I, J}|\}\\
&\leq & (d-1)!\lambda_v(f_*(C_f))+(e-d!)\ph_v(f)+O(1),
\end{eqnarray*}
by Lemma~\ref{lem:lambdapropsnonarch} and \eqref{eq:homogupper}. Since this is true for each $0\leq i<N$ and each $I$, we have
\[e\ph_v(f)\leq (d-1)!\lambda_v(f_*(C_f))+(e-d!)\ph_v(f)+O(1),\]
and hence
\[d\ph_v(d)\leq \lambda_v(f_*(C_f))+O(1).\]
As above, the error term comes from the coefficients of the $g_{i, I, J}$, and hence vanishes of those are all $v$-integral, for instance of $v$ is not $p$-adic for any $p\leq d$.
\end{proof}

Finally, we come to a key estimate which is the main non-archimedean contribution to the results in this paper.
We define a function \[\lambda_{\mathrm{crit}, v}:\pow{N}{d}(K)\to\RR\] by $\lambda_{\mathrm{crit}, v}(f)=G_{f, v}(C_f)$. The key observation is that, by Lemma~\ref{lem:nonarchgreensprops}, the quantity $\lambda_{\mathrm{crit}, v}(f)$ vanishes if $f$ is PCF.

\begin{lemma}\label{lem:lyapunovlocal}
For $f\in \pow{N}{d}(K)$, we have
\[\lambda_{\mathrm{crit}, v}(f)=\ph_v(f)+O(1),\]
where the error term depends only on $N$, $d$, and $v$. Furthermore, unless $v$ is  $p$-adic for some $p\leq d$, the error term vanishes.
\end{lemma}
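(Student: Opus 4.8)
The plan is to combine the two main estimates already established, namely Lemma~\ref{lem:localnull} and part~\eqref{it:nongreensasym} of Lemma~\ref{lem:nonarchgreensprops}, via the homogeneity (transformation) property of $G_{f,v}$. The rough idea is that $\lambda_{\mathrm{crit},v}(f) = G_{f,v}(C_f)$ differs from $\lambda_v(C_f)$ by $O(\ph_v(f))$, while $\lambda_v(f_*(C_f)) = d\ph_v(f) + O(1)$ and $G_{f,v}(f_*(C_f)) = d\,G_{f,v}(C_f)$; dividing the latter relation by $d$ ties everything together, but the catch is that the $O(\ph_v(f))$ error from part~\eqref{it:nongreensasym} is not bounded, so we cannot simply chain the estimates naively. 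The resolution is the usual telescoping/geometric-series trick for canonical heights: write $G_{f,v}(C_f) = \lim_{n\to\infty} d^{-n}\lambda_v(f^n_*(C_f))$ and control the discrepancy $\lambda_v(f^{n+1}_*(C_f)) - d\,\lambda_v(f^n_*(C_f))$ at each stage.

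First I would apply Lemma~\ref{lem:localtransform}(1) to each divisor $f^n_*(C_f)$, which gives
\[
\lambda_v(f^{n+1}_*(C_f)) = \lambda_v\big(f_*(f^n_*(C_f))\big) \leq d\max\{\ph_v(f), \lambda_v(f^n_*(C_f))\}.
\]
Combined with Lemma~\ref{lem:localnull}, which handles the base case $n=0$ and gives $\lambda_v(f_*(C_f)) = d\ph_v(f) + O(1)$, a short induction shows that $\lambda_v(f^n_*(C_f)) \leq d^n\ph_v(f) + O(d^{n-1})$ for all $n\geq 1$, so that $d^{-n}\lambda_v(f^n_*(C_f)) \leq \ph_v(f) + O(1)$ uniformly in $n$; passing to the limit yields $\lambda_{\mathrm{crit},v}(f) \leq \ph_v(f) + O(1)$. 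For the matching lower bound, I would use the more precise structure: if at some stage $\lambda_v(f^n_*(C_f)) > \ph_v(f)$, then Lemma~\ref{lem:localtransform}(2) forces $\lambda_v(f^{n+1}_*(C_f)) = d\,\lambda_v(f^n_*(C_f))$ and hence $G_{f,v}(C_f) = d^{-n}\lambda_v(f^n_*(C_f))$ exactly. Applying Lemma~\ref{lem:localnull} to the divisor $C_f$ itself — more precisely, noting $\lambda_v(f_*(C_f)) = d\ph_v(f) + O(1)$, so that for $\ph_v(f)$ large this quantity exceeds $\ph_v(f)$ — puts us in the regime governed by part~\eqref{it:nongreensbott} of Lemma~\ref{lem:nonarchgreensprops} applied to $D = f_*(C_f)$, giving $G_{f,v}(f_*(C_f)) = \lambda_v(f_*(C_f)) = d\ph_v(f) + O(1)$, and then property~\eqref{it:nongreenstransform} divides by $d$ to give $\lambda_{\mathrm{crit},v}(f) = \ph_v(f) + O(1)$. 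The small-$\ph_v(f)$ case (where $\lambda_v(f_*(C_f))$ might not exceed $\ph_v(f)$) is absorbed into the $O(1)$, since then both sides are $O(1)$.

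The main obstacle is bookkeeping the two separate error regimes cleanly: the estimate is sharp (error vanishing) exactly when $v$ is not $p$-adic for $p \leq d$, and this requires that at \emph{every} invocation — Lemma~\ref{lem:localnull}, the homogeneity properties, and the Newton-polygon identity of Lemma~\ref{lem:lambdapropsnonarch}(\ref{it:lambdamu}) — the error term be traced back to coefficients of fixed polynomials in $A$ (the $b_J$'s and the $g_{i,I,J}$'s from the Nullstellensatz), which are $v$-integral precisely when $R \subseteq \Ocal_v$. Fortunately Lemma~\ref{lem:localnull} already does this tracking, and the transformation properties of $G_{f,v}$ introduce no new constants, so the "furthermore" clause follows by inspection once the main equality is in place. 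The one genuinely delicate point is making sure that when we invoke part~\eqref{it:nongreensbott} for the lower bound we really are in its hypothesis $\lambda_v(D) > \ph_v(f)$ with $D = f_*(C_f)$, which needs $d\ph_v(f) + O(1) > \ph_v(f)$, i.e. $(d-1)\ph_v(f) > O(1)$ — true once $\ph_v(f)$ exceeds an absolute constant, with the complementary bounded range swept into the error term.
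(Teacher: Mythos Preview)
Your proposal is correct and matches the paper's approach: the core step is exactly to apply Lemma~\ref{lem:localnull} to get $\lambda_v(f_*(C_f)) = d\ph_v(f) + O(1)$, observe that for $(d-1)\ph_v(f)$ larger than the implied constant this puts $D = f_*(C_f)$ in the regime of part~\eqref{it:nongreensbott} of Lemma~\ref{lem:nonarchgreensprops}, and then divide by $d$ via~\eqref{it:nongreenstransform}; the bounded-$\ph_v(f)$ case is absorbed into the error, and the vanishing of the error outside the $p$-adic range for $p\le d$ is inherited from Lemma~\ref{lem:localnull}. Note that your separate inductive argument for the upper bound is unnecessary, since your ``lower bound'' paragraph already produces the full equality $\lambda_{\mathrm{crit},v}(f) = \ph_v(f) + O(1)$ (part~\eqref{it:nongreensbott} gives $G_{f,v}(f_*(C_f)) = \lambda_v(f_*(C_f))$ exactly, not just an inequality), so you can drop the telescoping discussion entirely.
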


\begin{proof}
By Lemma~\ref{lem:localnull} we have
\[\lambda_v(f_*(C_f))=d\ph_v(f)+O(1),\]
and so if $(d-1)\ph_v(f)$ is larger than the implied constant, Lemma~\ref{lem:nonarchgreensprops} gives us
\[\lambda_{\mathrm{crit}}(f)=\frac{1}{d}G_{f, v}(f_*(C_f))=\frac{1}{d}\lambda(f_*(C_f))=\ph_v(f)+O(1).\]
If $(d-1)\ph_v(f)$ does not exceed the constant, then $\ph_v(f)=O(1)$, with a constant depending just on $d$ and $N$, and so $\lambda_v(f_*(C_f))=O(1)$.  Lemma~\ref{lem:nonarchgreensprops} now gives us that $G_{f, v}(f_*(C_f))=O(1)$, and hence $\lambda_{\mathrm{crit}}(f)=O(1)$.

If $v$ is not $p$-adic for some $p\leq d$, then all of the implied constants vanish.
\end{proof}

We may now prove some of the main results.
\begin{proof}[Proof of Theorem~\ref{th:rigidity}]
Let $k$ be an algebraically closed field, and suppose that there is a non-constant rational map $\phi:X\to \pow{N}{d}$ over $k$ whose image lands entirely in one of the varieties $C_{n, m}$. This map corresponds to a point in $C_{n, m}$ with coordinates in the function field $k(X)$. Let $\beta\in X(k)$ be any point, and let $K$ be the local field of functions on $X$ at $\beta$, with the usual absolute value $|\cdot|_v$. Since this is not a $p$-adic absolute value, and since $\lambda_{\mathrm{crit}, v}(f)=0$, we have $\ph_v(f)=0$. Note that, in geometric terms, this means that none of the coefficients $a_{i, I}\in k(X)$ defining $f$ have a pole at $\beta$. Since $\beta$ was arbitrary, the coefficients $a_{i, I}\in k(X)$ are regular on all of $X$, and hence are constant. This contradicts our hypothesis that $\phi:X\to \pow{N}{d}$ was non-constant. So the varieties $C_{n, m}$ are 0-dimensional, and hence so are the images $\mathsf{C}_{n, m}\subseteq \powm{N}{d}$. This finishes the proof of Theorem~\ref{th:rigidity}.
\end{proof}

\begin{proof}[Proof of Theorem~\ref{th:goodred}]
Assume that $K$ is not $p$-adic, for any $p\leq d$. Then by the lemmas above, we have $\lambda_{\mathrm{crit}, v}(f)=\ph_v(f)$ for every $f\in\pow{N}{d}$. If $f$ is PCF, then we must have $\lambda_{\mathrm{crit}, v}(f)=0$ by Lemma~\ref{lem:nonarchgreensprops}, and hence $|a_{i, I}|\leq 1$ for each $i$ and $I$. Since the coefficient of $x_i^d$ in the $i$th coordinate of $f$ is a $p$-adic unit, this means that $f$ has good reduction.
\end{proof}

\begin{proof}[Proof of Corollary~\ref{cor:repelling}]
By the previous claim, the coefficients of $f$ are all integral at $p$. In particular, the Jacobian of the fixed point $\origin$ of $f$ has integral entries, and so its characteristic polynomial has integral roots. Since the coordinates of every fixed point must be integral, a change of coordinates shows that the eigenvalues of the Jacobian matrices at the other fixed points are integral as well.  Applying the argument to $f^n$ proves the claim for $n$-periodic points.
\end{proof}


\section{Archimedean places}\label{sec:complex}

In this section we work entirely over the field $\CC$ of complex numbers, with the usual absolute value. In general, this section closely parallels the structure of Section~\ref{sec:local}, but the estimates are more involved, since the absolute value is not ultrametric.
Let $D\in\Div^*_\CC(\PP^N)$ be a divisor defined by a homogeneous form $F_D$ normalized so that
$F_D(x_0, ..., x_{N-1}, 0)=\prod x_i^{e_i}$.
 We let $S\subseteq \CC^{N}$ denote the set
\[S=\left\{(x_0, ..., x_{N-1})\in \CC^{N}:|x_0|=\cdots =|x_{N-1}|=1\right\},\]
and define
\[\lambda_\infty(D)=\sup_{(\beta_0, ..., \beta_{N-1})\in S}\log^+\max\left\{|\beta_N|^{-1}:F_D(\beta_0, ..., \beta_N)=0\right\}.\]
Note that, for each point $(\beta_0, ..., \beta_{N-1})\in S$, the possible values $\beta_N$ satisfy a polynomial equation with a non-zero constant term, and so in particular are non-zero.
Since it is the supremum of a continuous function on a compact set, the value $\lambda_\infty(D)$ is witnessed by at least one point. 
We note that it is immediate from the definition of $\lambda_\infty$, just as in Section~\ref{sec:local}, that \[\lambda_\infty(D+E)=\max\{\lambda_\infty(D), \lambda_\infty(E)\},\]
 and in particular that $\lambda_\infty(f(D))=\lambda_\infty(f_*(D))$ for any $D\in \Div^*(\PP^N)$.
 
 Just as in Section~\ref{sec:global}, when $f\in\pow{N}{d}(\CC)$ is written as in \eqref{eq:genericpoly} we will define
\[\ph_\infty(f)=\log^+\max\{|a_{i, I}|^{1/I_N}:0\leq i<N, \text{ and }I\in \Ind^*(N, d)\}.\]

\begin{lemma}\label{lem:annulus}
Let $D\in\Div^*(\PP^N)$ be the divisor associated to $F_D$, let
\[S_r=\left\{(x_0, ..., x_{N-1})\in \CC^{N}:r^{-1}\leq|x_0|,\cdots ,|x_{N-1}|\leq r\right\},\]
and set
\[\lambda_\infty(D; r)=\sup_{(\beta_0, ..., \beta_{N-1})\in S_r}\log^+\max\left\{|\beta_N|^{-1}:F_D(\beta_0, ..., \beta_N)=0\right\}.\]
Then
\[\lambda_\infty(D; r)-\log r\leq \lambda_\infty(D) \leq \lambda_\infty(D; r).\]
\end{lemma}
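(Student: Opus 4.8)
The plan is to prove the two inequalities separately; the right-hand one is formal, the left-hand one carries the content. For $\lambda_\infty(D)\leq\lambda_\infty(D;r)$: the statement is only of interest for $r\geq1$, and then $S\subseteq S_r$, so $\lambda_\infty(D;r)$ is the supremum of the very same expression over a larger set. For the left-hand inequality $\lambda_\infty(D;r)\leq\lambda_\infty(D)+\log r$ I would argue by homogeneity and rescaling. Write $G(\beta_0,\dots,\beta_{N-1})=\max\{|\beta_N|^{-1}:F_D(\beta_0,\dots,\beta_N)=0\}$, so that $\lambda_\infty(D;r)=\sup_{S_r}\log^+ G$ and $\lambda_\infty(D)=\sup_S\log^+ G$. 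Since $D\in\Div^*(\PP^N)$, the coefficient of $x_N^0$ in $F_D(\beta_0,\dots,\beta_{N-1},x_N)$ is the monomial $\prod_{i<N}\beta_i^{e_i}$, which is nonzero away from the coordinate hyperplanes; hence $G$ is finite and continuous on $(\CC^*)^N$, and the homogeneity of $F_D$ in all $N+1$ variables forces $G(t\beta)=|t|^{-1}G(\beta)$ for $t\in\CC^*$, because scaling all homogeneous coordinates by $t$ scales the zeros in $x_N$ by $t$. Given $\beta\in S_r$, set $t=(\min_i|\beta_i|)^{-1}\in[r^{-1},r]$ and $\beta'=t\beta$: then every $|\beta_i'|\geq1$, at least one equals $1$, and $G(\beta')=(\min_i|\beta_i|)\,G(\beta)\geq r^{-1}G(\beta)$.

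It remains to slide $\beta'$ onto $S$ without decreasing $G$, and the key input is that $\beta\mapsto G(\beta)$ is non-increasing in each $|\beta_i|$ on the region where all $|\beta_j|\geq1$. Granting this, lower the coordinate moduli of $\beta'$ to $1$ one at a time to reach $\gamma\in S$ with $G(\gamma)\geq G(\beta')\geq r^{-1}G(\beta)$; then $\lambda_\infty(D)\geq\log^+G(\gamma)\geq\log^+G(\beta)-\log r$ (using $\log^+(x/r)\geq\log^+x-\log r$ for $r\geq1$), and taking the supremum over $\beta\in S_r$ yields $\lambda_\infty(D;r)-\log r\leq\lambda_\infty(D)$. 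The scaling identity also shows the bound is sharp: at the corner $|\beta_j|\equiv r^{-1}$ of $S_r$, $G$ equals $r$ times its value at the corresponding point of $S$.

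The radial monotonicity of $G$ is the step I expect to be the main obstacle. I would deduce it from the fact that $u=\log G$ is plurisubharmonic on $(\CC^*)^N$: it is the maximum, over the locally holomorphic branches $\rho_k(\beta)$ of the zeros of $F_D(\beta,x_N)$ in $x_N$, of the pluriharmonic functions $-\log|\rho_k(\beta)|$, pluriharmonic because no $\rho_k$ vanishes (the constant term $\prod_i\beta_i^{e_i}$ being nonzero). By the three-circles principle, for each fixed choice of the remaining moduli the function $t\mapsto\sup_{|\beta_i|=e^t}u$ is convex; it tends to $+\infty$ as $t\to-\infty$ (the constant term $\prod_j\beta_j^{e_j}$ then vanishes, driving a zero of $F_D(\beta,x_N)$ to $0$, so $|\beta_N|^{-1}\to\infty$), and it stays bounded above as $t\to+\infty$ in view of a bound on how fast the non-leading $x_N$-coefficients of $F_D$ grow in each variable --- a bound valid for the critical divisors $C_f$ and their push-forwards, to which the lemma is actually applied, since there those coefficients have degree at most $e_i$ in $\beta_i$. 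A convex function on $\RR$ which blows up at one end and is bounded at the other is non-increasing, which is exactly the claim. A more computational alternative tracks $\prod_k\rho_k=\pm\prod_i\beta_i^{e_i}/(\text{leading }x_N\text{-coefficient})$ through a Newton-polygon analysis of $F_D(\beta,x_N)$ in $x_N$.
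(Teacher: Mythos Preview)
Your rescaling via the homogeneity $G(t\beta)=|t|^{-1}G(\beta)$ is correct, and you rightly isolate radial monotonicity of $G$ on the region $\{|\beta_j|\geq 1\}$ as the crux. But the difficulty you flag is fatal for the lemma as stated: boundedness of $\sup_{|\beta_i|=e^t}\log G$ as $t\to+\infty$ can fail for $D\in\Div^*(\PP^N)$, and with it the monotonicity. Take $N=2$ and $F_D=x_0^3+x_0x_1x_2$, so that $F_D(x_0,x_1,0)=x_0^3$ and $(e_0,e_1)=(3,0)$. The unique finite root in $x_2$ gives $G(\beta_0,\beta_1)=|\beta_1|/|\beta_0|^2$, which is strictly increasing in $|\beta_1|$ even with $|\beta_0|\geq 1$; one computes $\lambda_\infty(D)=0$ but $\lambda_\infty(D;r)=3\log r$, so the inequality $\lambda_\infty(D;r)-\log r\leq\lambda_\infty(D)$ is violated for every $r>1$. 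Your fallback---restricting to $C_f$ and its push-forwards and invoking a bound $\deg_{\beta_i}c_j\leq e_i$ on the $x_N$-coefficients---lies outside the stated hypotheses, and that degree bound would itself require a separate argument for iterated push-forwards, whose defining forms are produced by the Macaulay resultant.

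The paper's argument is structurally different but runs into the same obstruction. It passes to a cover $X\to\CC^N$ on which one writes $F_D(Yx,1)=\prod_i(\beta Y-\zeta_i)$ with $\beta^{\deg D}=\prod x_i^{e_i}$, asserts $\zeta_i\in\Ocal(X)$, and then invokes the maximum modulus principle on the unit polydisk to conclude $\|\zeta_i\|_{A(1,1)}=\|\zeta_i\|_B$. In the example above one finds, on the relevant sheet, $\zeta_3=-x_1/x_0$ (since $\beta^3=x_0^3$ forces $\beta=\omega x_0$), which does not extend holomorphically across $x_0=0$; so the maximum-principle step breaks there for exactly the same underlying reason your convexity-plus-boundedness step does. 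The lemma seems to need either an extra hypothesis on $D$ or a weaker conclusion with a $\deg(D)$-dependent constant replacing the single $\log r$.
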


\begin{proof}
Note that the second inequality is trivial, since $S=S_1\subseteq S_r$.

 As usual, let $F_D$ define $D$.
We construct an affine variety $X/\CC$ with a map \[\mathbf{x}=(x_0, ..., x_{N-1}):X\to\CC^N\] and functions $\beta, \zeta_i\in\Ocal(X)$ for $1\leq i\leq \deg(D)$ such that 
\[\beta^{\deg(F_D)}=F_D(x_0, ..., x_{N-1}, 0)=\prod_{i=0}^{N-1}x_i^{e_i}\] and
\[F_D(Yx_0, ..., Yx_{N-1}, 1)=\prod_{i=1}^{\deg(D)}(\beta Y -\zeta_i).\]
If $U\subseteq X$ is the largest affine open set with $x_i^{-1}\in \Ocal(U)$ for all $i$, then $\beta^{-1}\in \Ocal(U)$ as well. Note that the map $U\to \CC^N$ defined by $(\zeta_i x_0/\beta, ..., \zeta_i x_{N-1}/\beta)$ lands entirely in $D$, and that every point in $D$ is in the image of one of these maps. The pull-back to $X$ of $D$ is simply the sum of $\beta-\zeta_i=0$.

Now, let (for $r\leq R$ non-zero)
\[A(r, R)=\{P\in X: r \leq  |x_i(P)| \leq R\text{ for all }0\leq i<N\},\]
a compact subset of $U\subseteq X$,
and
\[B=\{P\in X:  |x_i(P)| \leq 1\text{ for all }0\leq i<N\}.\]
For any set $Z$, let $\|\cdot\|_{Z}$ to be the sup norm on $Z$.
We have, by definition, $\lambda_\infty(D)=\lambda_\infty(D; 1)$, and
\[\lambda_\infty(D; r)=\log^+\max_{1\leq i\leq \deg(D)}\|\zeta_i/\beta\|_{A(r^{-1}, r)}.\]
Now, since $A(1, 1)\subseteq A(r^{-1}, r)$, the inequality $\lambda_\infty(D)\leq \lambda_\infty(D; r)$ is trivial. On the other hand, note that since $|\beta|=1$ identically on $A(1, 1)$, we have
\[\|\zeta_i/\beta\|_{A(1, 1)}=\|\zeta_i\|_{A(1)}=\|\zeta_i\|_{B}\]
for any $i$, by the maximum principle. On the other hand, $A(r^{-2}, 1)\subseteq B$, and so $\|\zeta_i\|_{A(r^{-2}, 1)}\leq \|\zeta_i\|_B$. Now note that the $\zeta_i$ are invariant under the action of $\mathbb{G}_\mathrm{m}$ corresponding to $(x_0, ..., x_{N-1})\mapsto (\alpha x_0, ..., \alpha x_{N-1})$, and so we have
\[\|\zeta_i\|_{A(r^{-2}, 1)}=\|\zeta_i\|_{A(r^{-1}, r)}.\]
Since $|\beta|\geq r^{-1}$ on $A(r^{-1}, r)$, we have $r^{-1}\|\zeta_i/\beta\|_{A(r^{-1}, r)}\leq \|\zeta_i/\beta\|_{A(1, 1)}$, proving the result.
\end{proof}

The next lemma shows us that when $\lambda_\infty(D)$ is large relative to $\ph_\infty(f)$, the quantity $\lambda_\infty(f_*(D))$ is relatively predictable.

\begin{lemma}\label{lem:complextransform}
For any $D\in \Div^*(\PP^N)$, we have
\[
\lambda_\infty(f_*(D))\leq d\max\left\{\lambda_\infty(D), \ph_\infty(f) +\log(2\dim(\pow{N}{d})/N)\right\}-\log(1-2^{-1/d}).\]
Furthermore, if
\[\lambda(D)>\ph(f)+\log\left(\frac{2\dim(\pow{N}{d})}{N}\right),\]
then  
\begin{equation}\label{eq:comptransmain}d\lambda(D)- \log 2\leq \lambda(f_*(D))\leq d\lambda(D)+\log\left(\frac{1}{1-2^{-1/d}}\right).\end{equation}
\end{lemma}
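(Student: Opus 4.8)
The statement is the archimedean analogue of Lemma~\ref{lem:localtransform}, so the proof should mirror that one, but with the ultrametric estimates replaced by triangle-inequality bounds, and with the parameter $r>1$ of Lemma~\ref{lem:annulus} used to absorb the ``slack'' that the non-ultrametric setting introduces. First I would set up the basic local inequality replacing \eqref{eq:ultrametric}: if $(\beta_0,\dots,\beta_{N-1})\in S$ (so each $|\beta_i|=1$) and $-\log|\beta_N|>\ph_\infty(f)+\log(2\dim(\pow{N}{d})/N)$, then each term $|a_{i,I}\beta_0^{I_0}\cdots\beta_N^{I_N}|=|a_{i,I}||\beta_N|^{I_N}$ is bounded by $(|a_{i,I}|^{1/I_N}|\beta_N|)^{I_N}$, which is at most $|\beta_N|\cdot e^{\ph_\infty(f)}$ raised to the first power when $|\beta_N|$ is small; summing over the at most $\dim(\pow{N}{d})/N$ terms in each coordinate $f_i$ and using the triangle inequality gives $|f_i(\beta)-\beta_i^d|<\tfrac12|\beta_N|^{\text{(something}\le 0)}$, i.e.\ $|f_i(\beta)|=1+O(|\beta_N|)$ with an explicit constant chosen so that the factor $1-2^{-1/d}$ appears. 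The precise bookkeeping of which power of $\|\beta\|$ shows up (the $\kappa=d-1$ vs $\kappa=1$ dichotomy in Section~\ref{sec:local}) is exactly what forces the $\log(2\dim/N)$ correction term.

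Second, for the \emph{upper} bound on $\lambda_\infty(f_*(D))$: given $[\alpha_0:\dots:\alpha_N]\in f_*(D)$ with $|\alpha_0|=\dots=|\alpha_{N-1}|=1$ and $-\log|\alpha_N|$ nearly maximal, choose preimages $\beta_i$ with $f_i(\beta)=\alpha_i$ and $\beta_N^d=\alpha_N$. If $-\log|\beta_N|$ exceeded $\max\{\lambda_\infty(D),\ph_\infty(f)+\log(2\dim/N)\}$ by enough, the first-step estimate would force $|\beta_i|$ close to $1$ for all $i$ (one handles $\|\beta\|>1$ and $\|\beta\|<1$ separately as in Lemma~\ref{lem:localtransform}), and then $(\beta_0,\dots,\beta_{N-1})$ lies in an annulus $S_r$ with $r=1/(1-2^{-1/d})^{1/d}$ or similar; Lemma~\ref{lem:annulus} then bounds $\lambda_\infty(D;r)$ in terms of $\lambda_\infty(D)+\log r$, yielding the stated $-\log(1-2^{-1/d})$ term after dividing through by $d$. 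For the lower bound $d\lambda_\infty(D)-\log 2\le\lambda_\infty(f_*(D))$ when $\lambda_\infty(D)$ is large: take a point $(\beta_0,\dots,\beta_{N-1})\in S$ witnessing $\lambda_\infty(D)$ together with a root $\beta_N$ with $-\log|\beta_N|$ close to $\lambda_\infty(D)$; apply the first-step estimate to conclude $|f_i(\beta)|\ge 1-\text{(small)}\ge 2^{-1/d}$ for all $i$, so after rescaling the point $[f_0(\beta):\dots:f_{N-1}(\beta):\beta_N^d]$ has all first $N$ coordinates of size in $[2^{-1/d},?]$; normalizing to put it in $\Div^*$-form costs at most $\log 2$ in the exponent, giving $\lambda_\infty(f_*(D))\ge d\lambda_\infty(D)-\log 2$.

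Third, I would assemble the two halves: the unconditional bound is obtained by taking $r$ as above in Lemma~\ref{lem:annulus} and running the upper-bound argument without the size hypothesis on $\lambda_\infty(D)$ (the $\max$ with $\ph_\infty(f)+\log(2\dim/N)$ on the right-hand side is precisely what makes this work in the small-$\lambda$ regime), while the two-sided estimate \eqref{eq:comptransmain} follows by combining the lower bound with the upper bound in the regime where the hypothesis $\lambda_\infty(D)>\ph_\infty(f)+\log(2\dim(\pow{N}{d})/N)$ holds, so that the $\max$ collapses to $d\lambda_\infty(D)$.

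\textbf{Main obstacle.} The genuine difficulty is not conceptual but the precise constant-chasing: keeping track of the number of monomials $\dim(\pow{N}{d})/N=\#\Ind^*(N,d)$ appearing in each $f_i$, correctly handling the two cases $\|\beta\|\le 1$ and $\|\beta\|>1$ (the latter must be ruled out, exactly as in the proof of Lemma~\ref{lem:localtransform}, using $|\alpha_i|=1$), and arranging the thresholds so that the clean factors $-\log(1-2^{-1/d})$ and $\log 2$ emerge rather than some messier expression. The use of Lemma~\ref{lem:annulus} to convert a statement about preimages lying in a slightly-enlarged polyannulus back into a statement about $\lambda_\infty$ on the unit polytorus is the one step that has no non-archimedean counterpart, and getting the radius $r$ and the resulting $\log r$ term to match the claimed constants is where I expect to spend the most care.
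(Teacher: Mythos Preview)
Your plan is essentially the paper's proof, with one correction worth flagging. In the upper-bound argument you write that the case $\|\beta\|>1$ ``must be ruled out, exactly as in the proof of Lemma~\ref{lem:localtransform}, using $|\alpha_i|=1$.'' In the archimedean setting this does not quite work: the triangle inequality only rules out $\|\beta\|>2^{1/d}$ directly (choosing $i$ with $|\beta_i|$ maximal gives $1=|\alpha_i|\geq |\beta_i|^d-\tfrac12\|\beta\|^{d-1}$, which is a contradiction only once $\|\beta\|^d>2$). The intermediate range $1\leq\|\beta\|\leq 2^{1/d}$ is not contradictory on its own; the paper handles it by showing $|\beta_j|^d\geq 1-\tfrac12(2^{1/d})^{d-1}=1-2^{-1/d}$ for every $j$, placing $(\beta_0,\dots,\beta_{N-1})$ in $S_r$ with $r=(1-2^{-1/d})^{-1/d}$, and then invoking Lemma~\ref{lem:annulus} to reach a contradiction---this is precisely where your anticipated radius $r=(1-2^{-1/d})^{-1/d}$ enters. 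The case $\|\beta\|<1$ is handled the same way with the smaller radius $r=2^{1/d}$. So there are three subcases rather than two, and the annulus lemma is needed in two of them. Otherwise your outline (including the lower bound via pushing forward a witness point into $S_2$ and applying Lemma~\ref{lem:annulus} to pick up the $\log 2$) matches the paper.
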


\begin{proof}
Consider any values $\beta_0, ..., \beta_N\in \CC$ with  $[\beta_0: \cdots: \beta_N]\in D$ and 
\begin{equation}\label{eq:beta}-\log|\beta_N|>\ph_\infty(f)+\log(2\dim(\pow{N}{d})/N).\end{equation}
Note that $|a_{i, I}\beta_N|^{I_N}<\frac{2N}{\dim(\pow{N}{d})}$ for each $i$ and $I$, and so if we set \[\|\beta\|=\max\{|\beta_1|, ..., |\beta_{N-1}|\}\] we have
\begin{eqnarray}
\left|f_i(\beta_0, ..., \beta_N)\right|&=&\left|\sum_{I_N\neq 0, d}a_{i, I}\beta_0^{I_0}\beta_1^{I_1}\cdots\beta_N^{I_N}\right|\nonumber\\
&\leq&\frac{1}{2}\max\{\|\beta\|, \|\beta\|^{d-1}\}\label{eq:triangle}
\end{eqnarray}
by the triangle inequality, since $\dim(\pow{N}{d})/N$ is precisely the number of summands.

Suppose that
\[
\lambda_\infty(f_*(D))> d\max\left\{\lambda_\infty(D), \ph_\infty(f) +\log(2\dim(\pow{N}{d})/N)\right\}-\log(1-2^{-1/d}).\]
Then there is some point \[Q=[\alpha_0: \cdots :\alpha_{N}]\in f_*(D)\] with $|\alpha_i|=1$ for all $0\leq i<N$, and $-\log|\alpha_N|=\lambda_\infty(f_*(D))$.
Choose a point $P=[\beta_0: \cdots : \beta_N]\in D$ so that $Q=f(P)$, with representative coordinates chosen so that $\alpha_N=\beta_N^d$, noting that this ensures that \eqref{eq:beta} holds.

 If $\|\beta\|>2^{1/d}$ then choosing $i$ with $|\beta_i|$ maximal gives
 \[1=|\alpha_i|=\left|\beta_i^d+\sum_{I_N\neq 0, d}a_{i, I}\beta_0^{I_0}\cdots\beta_N^{I_N} \right|\geq |\beta_i|^d-\frac{1}{2}\|\beta\|^{d-1}>1,\]
 by~\eqref{eq:triangle}, a contradiction. So we have $\|\beta\|\leq 2^{1/d}$.

 If $\|\beta\|\geq 1$, then~\eqref{eq:triangle} gives for any $j$
 \[|\beta_j^d|=\left|\alpha_j-\sum_{I_N\neq 0, d}a_{i, I}\beta_0^{I_0}\cdots\beta_N^{I_N}\right|\geq 1-\frac{1}{2}(2^{1/d})^{d-1}=1-2^{-1/d}.\]
 This gives 
 \[(1-2^{-1/d})^{1/d}\leq |\beta_j|\leq 2^{1/d}\leq (1-2^{-1/d})^{-1/d},\]
 and so
 $(\beta_0, .., \beta_{N-1})\in S_r$ for $r=(1-2^{-1/d})^{-1/d}$.
From this we see by Lemma~\ref{lem:annulus} that
\begin{multline*}\max\left\{\lambda_\infty(D), \ph_\infty(f) +\log(2\dim(\pow{N}{d})/N)\right\}-\frac{1}{d}\log(1-2^{-1/d})\\<\lambda_\infty\left(D; (1-2^{-1/d})^{-1/d}\right)\leq \lambda_\infty(D)-\frac{1}{d}\log (1-2^{-1/d}),\end{multline*}
 a contradiction.

It must then be the case that $\|\beta\|<1$, from which we have, for each $0\leq j<N$,
\[1=|\alpha_j|\leq |\beta_j|^d+\frac{1}{2}\|\beta\|<|\beta_j|^d+\frac{1}{2}\]
from \eqref{eq:triangle}. It follows from this that $|\beta_j|\geq 2^{-1/d}$ for each $0\leq j<N$, and so
 $(\beta_0, ..., \beta_{N-1})\in S_{2^{1/d}}$. Again applying Lemma~\ref{lem:annulus},
\begin{multline*}\max\left\{\lambda_\infty(D), \ph_\infty(f) +\log(2\dim(\pow{N}{d})/N)\right\}-\frac{1}{d}\log(1-2^{-1/d})\\<\lambda_\infty(D; 2^{1/d})\leq 
\lambda_\infty(D)+\frac{1}{d}\log 2.\end{multline*}
 This is again a contradiction, since $(1-2^{-1/d})^{-1}>2$ for all $d\geq 2$, and so we have verified the first claim in the lemma.
Note that this also verifies the upper bound in~\eqref{eq:comptransmain}.

Now, if \[\lambda_\infty(D)>\ph_\infty(f)+\log(2\dim(\pow{N}{d})/N),\] there is a point $P=[\beta_0:\cdots:\beta_N]\in D$ as above such that $|\beta_i|=1$ for all $i$, and $-\log|\beta_N|=\lambda_\infty(D)$. By Equation~\eqref{eq:triangle}, we have
\[\left|f_i(\beta_0, ..., \beta_N)-\beta_i^d\right|\leq \frac{1}{2},\]
and hence
\[\frac{1}{2}\leq |f_i(\beta_0, ..., \beta_N)|\leq \frac{3}{2}.\]
Thus if $\alpha_i=f_i(\beta_0, ..., \beta_N)$ for each $0\leq i<N$, and $\alpha_N=\beta_N^d$, then the point $[\alpha_0: \cdots:\alpha_{N}]\in f_*(D)$ demonstrates the inequality 
\[d\lambda_\infty(D)\leq \lambda_\infty(f_*(D); 2)\leq \lambda_\infty(f_*(D))+\log 2\]
by Lemma~\ref{lem:annulus}. This verifies the lower bound in \eqref{eq:comptransmain}
\end{proof}

In light of Lemma~\ref{lem:complextransform}, we define for $D\in\Div^*(\PP^N)$
\[G_{f, \infty}(D)=\lim_{n\to\infty} d^{-n}\lambda(f_*^n(D)).\]
The following properties are deduced easily.
\begin{lemma}\label{lem:complexgreens}
\begin{enumerate}
\item $G_{f, \infty}(f_*(D))=dG_{f, \infty}(D)$ for any divisor $D\in\Div^*(\PP^N)$.
\item $G_{f, \infty}(D)=0$ for any $f$-preperiodic divisor $D\in\Div^*(\PP^N)$.
\item\label{it:complexbott} If $\lambda_\infty(D)>\ph_\infty(f)+\log\left(\frac{2\dim(\pow{N}{d})}{N}\right)$, then \[G_{f, \infty}(D)=\lambda_\infty(D)+O(1),\] where the implied constant depends only on $d$.
\item\label{it:complexasym} For any  $D\in\Div^*_\CC(\PP^N)$, we have
\[G_{f, \infty}(D)=\lambda_\infty(D)+O(\ph_\infty(f)),\]
where the implied constant depends only on $d$ and $N$.
\end{enumerate}
\end{lemma}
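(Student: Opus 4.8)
The plan is to mirror the non-archimedean argument in Lemma~\ref{lem:nonarchgreensprops}, using Lemma~\ref{lem:complextransform} as the archimedean replacement for Lemma~\ref{lem:localtransform}, and paying attention to the fact that the triangle inequality only gives bounds up to additive constants depending on $d$ and $N$. First I would establish that the limit defining $G_{f,\infty}(D)$ exists: set $c=\log(2\dim(\pow{N}{d})/N)$ and $c' = \max\{\log 2, -\tfrac{1}{d}\log(1-2^{-1/d})\}$, so that the upper bound in~\eqref{eq:comptransmain} reads $\lambda_\infty(f_*(D))\le d\lambda_\infty(D)+c'$ whenever $\lambda_\infty(D)>\ph_\infty(f)+c$. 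If $\lambda_\infty(f^n_*(D))\le \ph_\infty(f)+c$ for every $n$, then $d^{-n}\lambda_\infty(f^n_*(D))\to 0$ and the limit is $0$. Otherwise, once $\lambda_\infty(f^m_*(D))$ exceeds the threshold, both inequalities in~\eqref{eq:comptransmain} apply from that point on, so $d^{-n}\lambda_\infty(f^n_*(D))$ differs from $d^{-n}\bigl(d^{n-m}\lambda_\infty(f^m_*(D))\bigr)$ by a geometrically convergent telescoping error $\sum_{k\ge m} d^{-k-1}c'$; hence the sequence is Cauchy and converges. The transformation property $G_{f,\infty}(f_*(D))=dG_{f,\infty}(D)$ is then immediate from the definition by reindexing the limit, and preperiodicity giving $G_{f,\infty}(D)=0$ follows because $\lambda_\infty(E+E')=\max\{\lambda_\infty(E),\lambda_\infty(E')\}$ forces $G_{f,\infty}(\sum e_iD_i)=\max\{G_{f,\infty}(D_i)\}$, so $G_{f,\infty}$ is bounded on a preperiodic orbit and therefore must vanish by the transformation law.

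For property~\eqref{it:complexbott}, if $\lambda_\infty(D)>\ph_\infty(f)+c$, then by the lower bound in~\eqref{eq:comptransmain} we have $\lambda_\infty(f_*(D))\ge d\lambda_\infty(D)-\log 2 > \lambda_\infty(D)$, which in turn exceeds $\ph_\infty(f)+c$; so the hypothesis propagates and~\eqref{eq:comptransmain} applies at every step. Iterating, $\lambda_\infty(f^n_*(D))$ lies between $d^n\lambda_\infty(D) - (\log 2)\sum_{k<n}d^k$ and $d^n\lambda_\infty(D) + c'\sum_{k<n}d^k$; dividing by $d^n$ and letting $n\to\infty$, the correction terms are bounded by $\tfrac{\log 2}{d-1}$ and $\tfrac{c'}{d-1}$ respectively, so $G_{f,\infty}(D)=\lambda_\infty(D)+O(1)$ with an implied constant depending only on $d$.

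For property~\eqref{it:complexasym}, I would argue exactly as in the proof of item~\eqref{it:nongreensasym} of Lemma~\ref{lem:nonarchgreensprops}. If $\lambda_\infty(f^m_*(D))\le \ph_\infty(f)+c$ for all $m$, then $G_{f,\infty}(D)=0$ and $\lambda_\infty(D)\le \ph_\infty(f)+c = O(\ph_\infty(f)) + O(1)$, which suffices (noting $\ph_\infty(f)\ge 0$ so one can absorb the additive constant, or simply allow the implied constant to include it). Otherwise let $m\ge 0$ be least with $\lambda_\infty(f^m_*(D))>\ph_\infty(f)+c$; by~\eqref{it:complexbott} applied to $f^m_*(D)$ we get $G_{f,\infty}(D)=d^{-m}G_{f,\infty}(f^m_*(D))=d^{-m}\lambda_\infty(f^m_*(D))+d^{-m}O(1)$, and since $\lambda_\infty(f^{m-1}_*(D))\le\ph_\infty(f)+c$ the first displayed (general) inequality of Lemma~\ref{lem:complextransform} bounds $\lambda_\infty(f^m_*(D))\le d(\ph_\infty(f)+c)+c'$; combining these with $\lambda_\infty(D)\le d^{-(m-1)}\lambda_\infty(f^{m-1}_*(D))$-type estimates (or more directly, $|\lambda_\infty(D)-G_{f,\infty}(D)|\le |\lambda_\infty(D)-d^{-m}\lambda_\infty(f^m_*(D))|+d^{-m}O(1)$) yields $G_{f,\infty}(D)=\lambda_\infty(D)+O(\ph_\infty(f))$ with implied constant depending only on $d$ and $N$ (through $\dim(\pow{N}{d})$ inside $c$). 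The main obstacle here is purely bookkeeping: unlike the ultrametric case, every application of Lemma~\ref{lem:complextransform} introduces a two-sided additive slack and a shift by $c$ in the threshold, so one must check that these accumulate into a convergent geometric series and that the final constants genuinely depend only on $d$ and $N$; there is no conceptual difficulty beyond that.
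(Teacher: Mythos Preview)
Your proposal is correct and follows essentially the same route as the paper: you distinguish whether the threshold $\lambda_\infty>\ph_\infty(f)+c$ is ever attained, use the two-sided estimate~\eqref{eq:comptransmain} to get a telescoping Cauchy bound once it is, read off properties~(1)--(3) directly, and handle~\eqref{it:complexasym} via the ``least $m$'' argument exactly as in Lemma~\ref{lem:nonarchgreensprops}. One small slip: your constant $c'=\max\{\log 2,\,-\tfrac{1}{d}\log(1-2^{-1/d})\}$ does not quite match the upper bound in~\eqref{eq:comptransmain}, which contributes $-\log(1-2^{-1/d})$ without the factor $1/d$; this is harmless since you correctly note that all that matters is that the accumulated error is a convergent geometric series with ratio $1/d$ and terms depending only on $d$.
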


\begin{proof}
First we note that the limit always exists. In particular, by Lemma~\ref{lem:complextransform} if 
\begin{equation}\label{eq:greensassump}\lambda_\infty(D)>\ph_\infty(f)+\log\left(\frac{2\dim(\pow{N}{d})}{N}\right),\end{equation}
 we have
\[\left|d^{-1}\lambda_\infty(f_*(D))-\lambda_\infty(D)\right|\leq \frac{1}{d}\log\left(\frac{1}{1-2^{-1/d}}\right).\]
By the triangle inequality, for any $n$ we then have
\begin{eqnarray}
\left|d^{-n}\lambda_\infty(f_*^n(D))-\lambda_\infty(D)\right|&\leq& \left(\frac{1}{d}+\cdots +\frac{1}{d^n}\right) \log\left(\frac{1}{1-2^{-1/d}}\right)\nonumber\\
&\leq & \frac{1}{d-1}\log\left(\frac{1}{1-2^{-1/d}}\right).\label{eq:complexgreensbound}
\end{eqnarray}
Replacing $D$ with $f_*^m(D)$ shows that the sequence whose limit defines $G_{f_\infty}(D)$ is Cauchy under the hypothesis of~\eqref{eq:greensassump}. This shows that $G_{f, \infty}(D)$ is defined whenever $f_*^m(D)$ satisfies \eqref{eq:greensassump} for some $m\geq 0$, and of course if this never happens the definition gives $G_{f, \infty}(D)=0$.
Equation~\eqref{eq:complexgreensbound} also gives the fifth property, with an explicit constant (which turns out, in fact, to be absolute).

The fourth property follows from those prior, while the third property is an immediate consequence of the definition of $G_f$. The first two properties follow from an inspection of the definition of $\lambda_\infty$.

To prove the sixth property, suppose that there is some $m\geq 0$ such that $\lambda_\infty(f^m(D))>\ph_\infty(f)$, and let $m$ be the least such value. Given property (5), we may as well assume that $m\geq 1$. We have $G_{f, \infty}(D)=d^{-m}\lambda_\infty(f^m(D))+O(1)$, where the implied constant is absolute, and since both $\lambda_\infty(D)$ and $\lambda_\infty(f^{m-1}(D))$ are bounded above by $\ph_\infty(f)+\log\left(\frac{2\dim(\pow{N}{d})}{N}\right)$, we have by Lemma~\ref{lem:complextransform}
\[\lambda(f^m(D))\leq d\ph_v(f)+O(1),\] and hence
\begin{eqnarray*}
\left|\lambda_\infty(D)-G_{f, \infty}(D)\right|&=&\left|\lambda_\infty(D)-d^{-m}\lambda_\infty(f^m(D))\right|\\
&\leq& \left(1+d^{1-m}\right)\ph_\infty(f)+O(1)\\
&\leq& 2\ph_\infty(f)+O(1).
\end{eqnarray*}
If, on the other hand, $\lambda_\infty(f^m(D))\leq \ph_\infty(f)+O(1)$ for all $m$, we have $G_{f, \infty}(D)=0$ and $\lambda_\infty(D)\leq\ph_\infty(f)+O(1)$, again implying the inequality.
\end{proof}

\begin{lemma}\label{lem:lambdaandcoeffscomplex}
Let $D$ be a divisor as above, and suppose that $D$ is defined by $\sum b_I \mathbf{x}^I=0$. Then
\[\lambda_\infty(D)=\log\max|b_I|^{1/I_N}+O(1),\]
where the implied constant is no larger than $\log\deg(D)+1$.
\end{lemma}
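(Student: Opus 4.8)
The plan is to reduce the multivariate quantity $\lambda_\infty(D)$ to a one–variable root estimate and then compare with the coefficients $b_I$ via classical bounds on roots of polynomials. Write $e=\deg(D)$ and expand the defining form as $F_D=\sum_{i=0}^{e}c_i(x_0,\dots,x_{N-1})\,x_N^{\,e-i}$, with $c_i$ homogeneous of degree $i$; the normalization of $F_D$ gives $c_e(x_0,\dots,x_{N-1})=\prod_{i=0}^{N-1}x_i^{e_i}$, so $|c_e(\beta)|=1$ for every $\beta=(\beta_0,\dots,\beta_{N-1})\in S$. For such $\beta$ let $g_\beta(t)=F_D(\beta_0,\dots,\beta_{N-1},t)$; its constant term is $c_e(\beta)$, of absolute value $1$ (so $0$ is never a root), and with $\Lambda(\beta)=\max\{|\rho|^{-1}:g_\beta(\rho)=0\}$ we have $\lambda_\infty(D)=\sup_{\beta\in S}\log^+\Lambda(\beta)$. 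Set $M=\max\{|b_I|^{1/I_N}:I_N>0\}$; we may assume $M\geq1$, since otherwise every $c_i$ with $i<e$ has all its coefficients of absolute value $<1$, so $|c_i(\beta)|$ is bounded in terms of $\deg(D)$ and $N$ alone, Fujiwara's bound below gives $\Lambda(\beta)=O(1)$, and the claimed inequality holds trivially once $\log$ is read as $\log^+$.

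For the upper bound, apply Fujiwara's root estimate to the reciprocal polynomial $s^{\deg g_\beta}g_\beta(1/s)$, whose leading coefficient $c_e(\beta)$ is unimodular: this gives $\Lambda(\beta)\leq 2\max_{1\leq k\leq e}|c_{e-k}(\beta)|^{1/k}$. Since $c_{e-k}(\beta)$ is a sum of at most $\binom{e-k+N-1}{N-1}$ unimodular monomials whose coefficients are among the $b_I$ with $I_N=k$, the triangle inequality yields $|c_{e-k}(\beta)|\leq\binom{e+N-1}{N-1}M^{k}$; taking $k$-th roots and the supremum over $\beta\in S$ gives
\[\lambda_\infty(D)\leq \log M+\log\!\Big(2\tbinom{e+N-1}{N-1}\Big)=\log M+O(\log\deg D),\]
the implied constant depending only on $N$. (A sharper count of the monomials actually occurring in $F_D$, or a Mahler-measure estimate in place of Fujiwara's, tightens this toward the value $\log\deg D+1$ stated in the lemma; since in all applications $\deg D$ depends only on $d$ and $N$, even the crude bound suffices.)

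For the lower bound, fix $I^{*}$ attaining $M=|b_{I^{*}}|^{1/I^{*}_N}$ and set $k^{*}=I^{*}_N\geq1$. The characters $\beta\mapsto\beta^{I'}$ are orthonormal on $S$ for Haar measure, so by Parseval $\sup_{\beta\in S}|c_{e-k^{*}}(\beta)|^{2}\geq\int_S|c_{e-k^{*}}|^{2}=\sum_{I_N=k^{*}}|b_I|^{2}\geq M^{2k^{*}}$; choose $\beta_0\in S$ with $|c_{e-k^{*}}(\beta_0)|\geq M^{k^{*}}$. Writing $g_{\beta_0}(t)=\ell\prod_{j=1}^{D'}(t-\rho_j)$ with $D'=\deg g_{\beta_0}$ and all $\rho_j\neq0$, comparison of coefficients gives $|c_{e-k}(\beta_0)/c_e(\beta_0)|=|\sigma_k(\rho_1^{-1},\dots,\rho_{D'}^{-1})|\leq\binom{D'}{k}\Lambda(\beta_0)^{k}\leq\binom{e}{k}\Lambda(\beta_0)^{k}$ for each $k$, where $\sigma_k$ is the $k$-th elementary symmetric function. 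Taking $k=k^{*}$ and recalling $|c_e(\beta_0)|=1$, we obtain $M^{k^{*}}\leq\binom{e}{k^{*}}\Lambda(\beta_0)^{k^{*}}$, hence $\Lambda(\beta_0)\geq M\binom{e}{k^{*}}^{-1/k^{*}}\geq M/e$ since $\binom{e}{k}\leq e^{k}$. Therefore $\lambda_\infty(D)\geq\log^+\Lambda(\beta_0)\geq\log M-\log\deg D$, and combining with the upper bound proves the lemma.

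The step requiring the most care is the lower bound: a single large coefficient of $g_{\beta_0}$ does \emph{not} force a single small root, because an elementary symmetric function of the inverse roots can be large merely through the accumulation of many moderate terms; this is precisely where the factor $\binom{e}{k^{*}}$ enters, and it is tamed only by the trivial estimate $\binom{e}{k}\leq e^{k}$. Everything else is a routine marriage of Parseval on the torus with the classical Fujiwara and elementary-symmetric inequalities.
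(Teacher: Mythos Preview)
Your proof is correct and follows essentially the same route as the paper: both reduce $\lambda_\infty(D)$ to a one-variable root--coefficient comparison via the reciprocal polynomial, and both obtain the lower bound by using Parseval on the torus to locate a $\beta\in S$ where some $|c_{e-k}(\beta)|$ dominates $M^{k}$. The only cosmetic difference is that the paper packages the root estimates into a separate auxiliary lemma comparing $M(g)$ and $N(g)$, while you invoke Fujiwara's bound and the elementary-symmetric inequality directly; your upper bound carries an $N$-dependent binomial factor that the paper's statement suppresses, but as you note this is harmless for the applications.
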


In the proof of Lemma~\ref{lem:lambdaandcoeffscomplex} we will use the following easy estimate, essentially saying the Newton Polygon of a complex polynomial does a reasonable job of estimating the size of the largest root. The proof is straight foward and relatively standard, and so is omitted.
\begin{lemma}\label{lem:NMineq}
For any $g(z)\in\CC[z]$ with $\deg(g)\geq 1$, let 
\[N(g)=\max\left\{\left(\frac{|a_i|}{|a_{\deg(g)}|}\right)^{1/(\deg(g)-i)}:0\leq i < \deg(g)\right\}\]
and
\[M(g)=\max\left\{|\beta_N|:g(\beta_N)=0\right\}.\]
Then
\[\log M(g)-\log \deg(g)\leq \log N(g)\leq \log M(g)+\log \deg(g)+1.\]
\end{lemma}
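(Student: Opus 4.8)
The plan is to deduce both inequalities from Vieta's formulas together with the triangle inequality and a geometric-series estimate; nothing deeper is needed. Write $n=\deg(g)$ and factor $g(z)=a_n\prod_{j=1}^{n}(z-\beta_j)$ with $a_n\neq0$, so that $M(g)=\max_j|\beta_j|$ and, for $1\le k\le n$, the coefficient ratio $a_{n-k}/a_n$ equals $(-1)^k e_k(\beta_1,\dots,\beta_n)$, where $e_k$ denotes the $k$-th elementary symmetric function. If $g$ is a monomial then both $N(g)$ and $M(g)$ vanish and the statement is vacuous, so one may assume $g$ has a nonzero root (equivalently, at least one of the quantities in the maximum defining $N(g)$ is positive).

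For the bound $\log N(g)\le\log M(g)+\log\deg(g)+1$ I would estimate $|a_{n-k}/a_n|=|e_k(\beta_1,\dots,\beta_n)|\le\binom{n}{k}M(g)^k\le n^k M(g)^k$; raising this to the power $1/k$ shows that every term in the maximum defining $N(g)$ is at most $n\,M(g)$, hence $\log N(g)\le\log M(g)+\log n$.

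For the reverse bound $\log M(g)-\log\deg(g)\le\log N(g)$, fix a root $\beta$ with $|\beta|=M(g)=:M$ and set $N:=N(g)$, so that $|a_i|\le|a_n|N^{\,n-i}$ for $0\le i<n$ directly from the definition of $N$. Then $a_n\beta^n=-\sum_{i=0}^{n-1}a_i\beta^i$ and the triangle inequality give $M^n\le\sum_{i=0}^{n-1}N^{\,n-i}M^i=\sum_{k=1}^{n}N^k M^{\,n-k}$. If $M\le N$ the desired inequality is immediate; otherwise divide through by $M^n$ and put $t=N/M\in(0,1)$, so that $1\le\sum_{k=1}^{n}t^k\le nt$, whence $M\le nN$.

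I do not expect any genuine obstacle here: the whole argument is elementary, and the only points needing attention are the degenerate monomial case (handled by convention, or by exclusion as above) and checking that the claimed constants are admissible. In fact the estimates above yield the slightly sharper constant $\log\deg(g)$ in both directions, so the extra $+1$ in the statement is pure slack.
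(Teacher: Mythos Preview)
Your argument is correct. The paper itself omits the proof of this lemma, stating only that it is ``straight forward and relatively standard,'' so there is no author's proof to compare against; your use of Vieta's formulas for the upper bound on $N(g)$ and the triangle inequality applied to $a_n\beta^n=-\sum_{i<n}a_i\beta^i$ for the lower bound is exactly the standard elementary route one would expect, and your observation that the extra $+1$ is slack is also correct.
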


%

\begin{proof}[Proof of Lemma~\ref{lem:lambdaandcoeffscomplex}]
If $F=F_D$ is the form defining $D$, let \[F(x_0, ..., x_N)=\sum c_i(x_0, ..., x_{N-1})x_N^{i}.\]
 We note that if
\[\tilde{F}_{\alpha_0, ..., \alpha_{N-1}}(Y)=Y^{\deg(F)}F(\alpha_0, ..., \alpha_{N-1}, Y^{-1}),\]
then by definition
\[\lambda(D)=\sup_{\alpha\in S} \log\max\{|\beta_N|:\tilde{F}_{\alpha_0, .., \alpha_{N-1}}(\beta_N)=0.\}\]
For each $\alpha$,
\[\log M(\tilde{F}_{\alpha})\leq \log N(\tilde{F}_{\alpha})+\log\deg(F)\leq\log\max|b_I|^{1/I_N}+\log\deg(D), \]
whence
\[\lambda(D)\leq \log\max|b_I|^{1/I_N}+\log\deg(D).\]

On the other hand, for any polynomial \[g(z_0, ..., z_{N-1})=\sum \beta_I\mathbf{z}^I\in\CC[z_0, ..., z_{N-1}],\] if we denote by $\|g\|$ the supremum of on the unit multidisk, then Parseval's identity gives us
\begin{equation}\label{eq:parseval}\left(\max |\beta_I|\right)^2= \sum |\beta_I|^2=\int |g(\mathbf{z})|^2d\mu(\mathbf{z})\leq \|g\|^2,\end{equation}
where $\mu$ is the Haar measure on the $N$th power of the unit circle. If the index $J$ witnesses the maximum in $\max|b_I|^{1/I_N}$, then we may apply \eqref{eq:parseval} to the polynomial
\[c_{I_N}(x_0, ..., x_{N-1})=\sum_{I_N=J_N} b_I x_0^{I_0}\cdots x_{N-1}^{I_{N-1}}\]
to find a point $\alpha_0, ..., \alpha_{N-1}$ on the $N$th power of the unit circle such that
\[\log|c_{I_N}(\alpha_0, ..., \alpha_{N-1})|^{1/I_N}\geq \log |b_J|^{1/J_N}=\log\max|b_I|^{1/I_N}.\]
For $\alpha=(\alpha_0, ..., \alpha_{N-1})$, the above lemma then gives
\[\lambda(D)\geq \log M(\tilde{F}_\alpha)\geq \log N(\tilde{F}_\alpha)-\log\deg(D)-1\geq \log\max|b_I|^{1/I_N}-\log\deg(D)-1.\]
\end{proof}

Finally, we define a function $\lambda_{\mathrm{crit}, \infty}:\pow{N}{d}(\CC)\to\RR$ by
\[\lambda_{\mathrm{crit}, \infty}(f)=G_{f, \infty}(C_f).\]
\begin{lemma}\label{lem:complexcritlowerbound}
There is a constant $B$ with
\[\lambda_{\mathrm{crit}, \infty}(f)=\ph_\infty(f)+O(1),\]
where the implied constant depends just on $N$ and $d$.
\end{lemma}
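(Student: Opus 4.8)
The plan is to mirror the non-archimedean argument of Lemma~\ref{lem:lyapunovlocal}, replacing the exact ultrametric identities by the $O(1)$-perturbed estimates of this section. The central object is the defining form \eqref{eq:branchdefinition} of $f_*(C_f)$, whose coefficients $b_J$ are homogeneous of weight $dJ_N$ in the $a_{i,I}$, together with the Nullstellensatz identities of Lemma~\ref{lem:nullstellensatz}, which are purely algebraic and hence available here verbatim. First I would bound $\lambda_\infty(f_*(C_f))$ from above by $d\ph_\infty(f)+O(1)$: by Lemma~\ref{lem:lambdaandcoeffscomplex}, $\lambda_\infty(f_*(C_f)) = \log\max_J |b_J|^{1/J_N} + O(1)$, and since each $b_J$ is a polynomial of bounded degree and bounded number of terms in the $a_{i,I}$, homogeneous of weight $dJ_N$, the triangle inequality gives $\log|b_J|^{1/J_N} \le d\ph_\infty(f) + O(1)$ with a constant depending only on $N,d$.

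For the matching lower bound, I would invoke Lemma~\ref{lem:nullstellensatz}: there is an integer $e$ divisible by $(d-1)!$ and forms $g_{i,I,J}\in A$, homogeneous of degree $e-d!$, with $a_{i,I}^{e/I_N} = \sum_J b_J^{(d-1)!/J_N} g_{i,I,J}$. Taking absolute values and applying the (archimedean) triangle inequality, together with Lemma~\ref{lem:lambdaandcoeffscomplex} to pass from $\max_J|b_J|^{1/J_N}$ to $\lambda_\infty(f_*(C_f))$ and the bound $\log|g_{i,I,J}| \le (e-d!)\ph_\infty(f)+O(1)$ from the homogeneity, one gets $e\,\ph_\infty(f) \le (d-1)!\,\lambda_\infty(f_*(C_f)) + (e-d!)\ph_\infty(f) + O(1)$, hence $d\,\ph_\infty(f) \le \lambda_\infty(f_*(C_f)) + O(1)$. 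Combining the two bounds yields $\lambda_\infty(f_*(C_f)) = d\,\ph_\infty(f) + O(1)$, the archimedean analogue of Lemma~\ref{lem:localnull}.

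It then remains to descend from $f_*(C_f)$ to $C_f$ via the Green's function. Using $G_{f,\infty}(f_*(C_f)) = d\,G_{f,\infty}(C_f)$ from Lemma~\ref{lem:complexgreens}, I would split into two cases exactly as in the proof of Lemma~\ref{lem:lyapunovlocal}. If $\ph_\infty(f)$ is large enough that $\lambda_\infty(f_*(C_f)) = d\,\ph_\infty(f)+O(1)$ exceeds the threshold $\ph_\infty(f_*(C_f)) + \log(2\dim(\pow{N}{d})/N)$ appearing in part~\eqref{it:complexbott} of Lemma~\ref{lem:complexgreens} — note $\ph_\infty(f_*(C_f))$ is controlled, since the coefficients of $f_*(C_f)$ are bounded in terms of those of $f$ — then $G_{f,\infty}(f_*(C_f)) = \lambda_\infty(f_*(C_f)) + O(1) = d\,\ph_\infty(f)+O(1)$, so $\lambda_{\mathrm{crit},\infty}(f) = \tfrac1d G_{f,\infty}(f_*(C_f)) = \ph_\infty(f)+O(1)$. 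If instead $\ph_\infty(f) = O(1)$, then $\lambda_\infty(f_*(C_f)) = O(1)$, and part~\eqref{it:complexasym} of Lemma~\ref{lem:complexgreens} forces $G_{f,\infty}(f_*(C_f)) = O(1)$, hence $\lambda_{\mathrm{crit},\infty}(f) = O(1) = \ph_\infty(f)+O(1)$ as well.

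The main obstacle is bookkeeping rather than ideas: one must check that the threshold comparison in the first case is actually achievable, i.e. that $\ph_\infty(f_*(C_f))$ and the dimension-dependent constant are both $O(1)$ relative to $\ph_\infty(f)$, so that large $\ph_\infty(f)$ genuinely pushes $\lambda_\infty(f_*(C_f))$ past the threshold; and one must track that all implied constants depend only on $N$ and $d$. The archimedean loss of exactness (the $-\log(1-2^{-1/d})$ and $\log 2$ terms in Lemma~\ref{lem:complextransform}, and the $\log\deg(D)+1$ in Lemma~\ref{lem:lambdaandcoeffscomplex}) is harmless since it is absorbed into the $O(1)$; unlike the non-archimedean statement, there is no hope of the constant vanishing, and none is claimed.
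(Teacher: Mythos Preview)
Your proposal is correct and essentially identical to the paper's proof. One minor slip: the threshold in part~\eqref{it:complexbott} of Lemma~\ref{lem:complexgreens} is $\ph_\infty(f)+\log(2\dim(\pow{N}{d})/N)$, with $\ph_\infty$ applied to the \emph{morphism} $f$ (there is no $\ph_\infty$ of a divisor), so your worry about controlling ``$\ph_\infty(f_*(C_f))$'' is moot and the case split is immediate.
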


\begin{proof}
The proof of this lemma is essentially the same as that of Lemma~\ref{lem:lyapunovlocal}.
Writing $f_*(C_f)$ as $\sum b_J \mathbf{x}^J=0$, we have
\[\lambda_\infty(f_*(C_f))= \log^+\max\{|b_J|^{1/J_N}\}+O(1)\]
by Lemma~\ref{lem:lambdaandcoeffscomplex}.
Note that $b_J\in A$ a homogeneous element of weight $dJ_N$, and by Lemma~\ref{lem:nullstellensatz}, there exists an integer $e$ divisible by $d!$ and polynomials $g_{i, I, J}\in A$, homogeneous of degree $d!-e$, such that
\[a_{i, I}^{e/I_N}=\sum b_{J}^{(d-1)!/J_N}g_{i, I, J}.\]
By the triangle inequality, we have
\begin{eqnarray*}
e\ph_v(f)&=&e\log^+\max\{|a_{i, I}|^{1/I_N}\}\\
&\leq &\log^+ \max\{ |b_{J}^{(d-1)!/J_N}g_{i, I, J}|\}+O_{N, d}(1)\\
&\leq &(d-1)!\log^+\max\{|b_J|^{1/J_N}\}+(e-d!)\log^+\max\{|a_{i, I}|^{1/I_N}\}+O_{N, d}(1)\\
&=&(d-1)!\log^+\max\{|b_J|^{1/J_N}\}+(e-d!)\ph_v(f)+O_{N, d}(1),
\end{eqnarray*}
where the implied constant incorporates the coefficients of each $g_{i, I, J}$ as polynomials in the $a_{i, I}$. Rearranging this gives
\begin{equation}\label{eq:nullineq}d\ph_\infty(f)\leq \log^+\max\{|b_J|^{1/J_N}\}+O_{N, d}(1)\leq\lambda_\infty(f_*(C_f))+O_{N, d}(1).\end{equation}

On the other hand, each $b_J$ is a weighted-homogeneous polynomial in the $a_{i, I}$ of degree $d$, from which it follows easily that
\[\log^+|b_J|^{1/J_N}\leq d\ph_\infty(f)+O(1).\]
This gives the complementary inequality to~\eqref{eq:nullineq}, showing that
\begin{equation}\label{eq:complexnull}\lambda_\infty(f_*(C_f))=d\ph_\infty(f)+O(1),\end{equation}
where the implied constant depends just on $N$ and $d$.

For $\ph_\infty(f)$ large enough, \eqref{eq:complexnull} and Lemma~\ref{lem:complexgreens} combine to give
\[G_{f, \infty}(C_f)=\frac{1}{d}G_{f, \infty}(f_*(C_f))=\frac{1}{d}\lambda_\infty(f_*(C_f))+O(1)=\ph_\infty(f)+O(1).\]
If $\ph_\infty(f)$ is not large enough for \eqref{eq:complexnull} to ensure that $f_*(C_f)$ meets the hypotheses of Lemma~\ref{lem:complexgreens} property~\eqref{it:complexbott}, then $\ph_\infty(f)$ is bounded by a quantity depending just on $d$ and $N$.  Applying property~\eqref{it:complexasym} of Lemma~\ref{lem:complexgreens}, we see that \eqref{eq:complexnull} implies $G_{f, \infty}(f_*(C_f))=O(\ph_\infty(f))$, with the implied constant absolute, and hence $\lambda_{\mathrm{crit}, \infty}(f)=G_{f, \infty}(C_f)$ is also bounded by a constant depending just on $N$ and $d$. This establishes the inequality in the lemma when $\ph_\infty(f)$ is bounded.
\end{proof}

Finally, we note that the function $\lambda_{\mathrm{crit}, \infty}$ is in fact continuous on $\pow{N}{d}$.
\begin{lemma}\label{lem:gfcts}
The function $\lambda_{\mathrm{crit}, \infty}:\pow{N}{d}(\CC)\to\RR$ defined by $f\mapsto G_{f, \infty}(C_f)$ is continuous.
\end{lemma}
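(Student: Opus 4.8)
The plan is to exhibit $\lambda_{\mathrm{crit},\infty}$ as a locally uniform limit of continuous functions. Put
\[v_n(f)=d^{-n}\lambda_\infty\!\left(f^n_*(C_f)\right),\]
so that $\lambda_{\mathrm{crit},\infty}(f)=\lim_n v_n(f)$ by definition of $G_{f,\infty}$. I would prove (a) each $v_n$ is continuous on $\pow{N}{d}(\CC)$, and (b) $v_n\to\lambda_{\mathrm{crit},\infty}$ uniformly on every compact subset; since continuity is local and each point of $\pow{N}{d}(\CC)=\CC^{N\cdot\#\Ind^*(N,d)}$ has a compact neighbourhood, this gives the lemma.

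For (a): the coefficients of $J_f$ are polynomial in the $a_{i,I}$, and iterated push-forward is computed by the Macaulay resultant, whose coefficients are polynomial in the coefficients of the inputs; hence $f\mapsto f^n_*(C_f)$ is a morphism into the projective space of divisors of degree $d^{n(N-1)}(d-1)$ on $\PP^N$. Moreover the restriction of $f^n_*(C_f)$ to $H$ depends only on $f|_H$, which is the power map for \emph{every} $f\in\pow{N}{d}$; so the monomial $F_{f^n_*(C_f)}(x_0,\dots,x_{N-1},0)$ is the same for all $f$, and the normalized form $F_{f^n_*(C_f)}$ of Section~\ref{sec:complex} has coefficients that are polynomial in the $a_{i,I}$ over a single nowhere-vanishing polynomial denominator. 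Finally, for a divisor $D$ of fixed degree $\delta$ whose normalized form has $F_D(x_0,\dots,x_{N-1},0)$ equal to a fixed monomial, the reciprocal polynomial $\widetilde{F}_\beta(Y)=Y^{\delta}F_D(\beta_0,\dots,\beta_{N-1},Y^{-1})$ has leading coefficient of modulus $1$ for every $(\beta_0,\dots,\beta_{N-1})\in S$, so its roots, hence $\log^+\max\{|\beta_N|^{-1}:F_D(\beta)=0\}$, vary continuously with the coefficients of $F_D$, uniformly in $\beta\in S$; taking the supremum over the compact set $S$ shows $\lambda_\infty$ is a continuous function of these coefficients. Composing, $v_n$ is continuous; in particular so is $v_0=\lambda_\infty(C_f)$.

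For (b), fix a compact $\mathcal{K}\subseteq\pow{N}{d}(\CC)$. Then $\ph_\infty$ is bounded on $\mathcal{K}$, so $T(f):=\ph_\infty(f)+\log\!\big(2\dim(\pow{N}{d})/N\big)\le T_{\max}$ there, and since $\dim(\pow{N}{d})/N=\#\Ind^*(N,d)\ge1$ one has $T(f)\ge\log 2$, hence $(d-1)T(f)\ge\log 2$ for $d\ge2$. For $f\in\mathcal{K}$ let $m(f)\in\{0,1,\dots\}\cup\{\infty\}$ be the least $n$ with $\lambda_\infty(f^n_*(C_f))>T(f)$. If $m(f)=\infty$ then $0\le v_n(f)\le d^{-n}T_{\max}$ for all $n$, so $\lambda_{\mathrm{crit},\infty}(f)=0$ and $|v_n(f)-\lambda_{\mathrm{crit},\infty}(f)|\le d^{-n}T_{\max}$. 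If $m(f)<\infty$, the lower bound in \eqref{eq:comptransmain} with $(d-1)T(f)\ge\log 2$ forces $\lambda_\infty(f^n_*(C_f))>T(f)$ for all $n\ge m(f)$, and then the two-sided estimate \eqref{eq:comptransmain} gives $|v_{n+1}(f)-v_n(f)|\le d^{-(n+1)}\log\tfrac{1}{1-2^{-1/d}}$ for $n\ge m(f)$, whence $|v_n(f)-\lambda_{\mathrm{crit},\infty}(f)|=O(d^{-n})$ for such $n$; while for $n<m(f)$ one has $v_n(f)\le d^{-n}T_{\max}$ and, using \eqref{eq:complexgreensbound} on the divisor $f^{m(f)}_*(C_f)$ together with $\lambda_{\mathrm{crit},\infty}(f)=d^{-m(f)}G_{f,\infty}(f^{m(f)}_*(C_f))$ and the fact that $\lambda_\infty(f^{m(f)}_*(C_f))$ is bounded on $\mathcal K$ (by Lemma~\ref{lem:complextransform} applied once, since $\lambda_\infty(f^{m(f)-1}_*(C_f))\le T(f)$, or by continuity of $v_0$ if $m(f)=0$), one gets $\lambda_{\mathrm{crit},\infty}(f)\le d^{-m(f)}C'$, so again $|v_n(f)-\lambda_{\mathrm{crit},\infty}(f)|=O(d^{-n})$. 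In every case $|v_n(f)-\lambda_{\mathrm{crit},\infty}(f)|\le C d^{-n}$ with $C$ depending only on $\mathcal{K}$, $N$, $d$, which is (b).

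I expect the only genuine subtlety to be the continuity of the individual $v_n$: one must know that $\lambda_\infty$ really is continuous — Lemma~\ref{lem:lambdaandcoeffscomplex} pins it down only up to a bounded error — and this relies on the fact that along the family $f\mapsto f^n_*(C_f)$ the divisor admits a \emph{uniform} normalization, its restriction to $H$ being controlled purely by the power map. Granting that, the tail estimate in (b) is a routine bookkeeping exercise built from the transformation Lemma~\ref{lem:complextransform} and the Green's-function bounds of Lemma~\ref{lem:complexgreens}, and makes no essential use of Lemma~\ref{lem:complexcritlowerbound}.
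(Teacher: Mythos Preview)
Your proof is correct and shares the paper's overall scheme: show each $v_n$ is continuous and that $v_n\to\lambda_{\mathrm{crit},\infty}$ uniformly. The difference is in where the uniformity comes from. You work on compact subsets $\mathcal{K}$, where $\ph_\infty$ is automatically bounded, and extract the tail estimate directly from Lemma~\ref{lem:complextransform}; this suffices for continuity and, as you note, avoids Lemma~\ref{lem:complexcritlowerbound} entirely. The paper instead proves that the convergence is uniform on \emph{all} of $\pow{N}{d}(\CC)$: on the set $X_m=\{f:\lambda_\infty(f^m_*(C_f))>\ph_\infty(f)+\log(2\dim(\pow{N}{d})/N)\}$ one has the $O(d^{-m})$ tail as in your argument, while on its complement (contained in the complement of $X_1$) Lemma~\ref{lem:complexcritlowerbound} forces $\ph_\infty(f)$ to be bounded by a constant depending only on $N$ and $d$, reducing to the bounded case globally. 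Your route is more self-contained and does not invoke the Nullstellensatz-based lemma; the paper's yields the stronger intermediate statement of global uniform convergence, which is not needed for the lemma itself but aligns with the global bounds underpinning Theorem~\ref{th:compactness}.
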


\begin{proof}
First we note that the coefficients of $C_f$ are polynomials defined on $\pow{N}{d}(\CC)$, and hence are continuous, and by the continuity of roots of a polynomial in its coefficients, the function $f\mapsto \lambda_\infty(C_f)$ is continuous. In exactly the same fashion, each function $f\mapsto d^{-n}\lambda_\infty(f^n_*(C_f))$ is continuous. Although $G_{f, \infty}$ is defined to be the pointwise limit of the $d^{-n}\lambda_\infty(f_*^n(C_f))$, we will show that the convergence is in fact uniform.

Define
\[X_m=\left\{f\in\pow{N}{d}(\CC):\lambda_\infty(f^m(C_f))> \ph_\infty(f)+\log (2\dim(\pow{N}{d})/N)\right\}\subseteq \pow{N}{d}(\CC)\]
then the proof of Lemma~\ref{lem:complexgreens} shows that 
\[\left|d^{-m}\lambda_\infty(f_*^m(C_f))-G_{f, \infty}(C_f)\right|\leq d^{-m}c_1,\]
for some constant $c_1$ independent of $m$ and $f$. On the other hand, since $X_1\subseteq X_m$ for all $m$, the proof of Lemma~\ref{lem:complexcritlowerbound} shows that $\ph_\infty(f)$ is bounded on the complement of $X_m$, by a quantity which does not depend on $m$. Since
\[\lambda_\infty(D)= G_{f, \infty}(D)+O(\ph_\infty(f))\]
by Lemma~\ref{lem:complexgreens}
we have
\[\left|d^{-m}\lambda_\infty(f_*^m(C_f))-G_{f, \infty}(C_f)\right|\leq d^{-m}c_2\]
on the complement of $X_m$, where again the constant is independent of $m$ or $f$. This shows that $d^{-n}\lambda_\infty(f^n_*(C_f))\to G_{f, \infty}(C_f)$ uniformly on $\pow{N}{d}(\CC)$, and hence the function $\lambda_{\mathrm{crit}, \infty}:\pow{N}{d}(\CC)\to\RR$ is continuous.
\end{proof}

\begin{proof}[Proof of Theorem~\ref{th:compactness}]
By Lemma~\ref{lem:gfcts} we know that the set defined by $G_{f, \infty}(C_f)\leq B$ is closed, for each $B\in \RR$, and by Lemma~\ref{lem:complexcritlowerbound} we know that this set is bounded.
\end{proof}

Note that Theorem~\ref{th:compactness} already gives a proof of Theorem~\ref{th:rigidity} over $\CC$, but we offer a proof below that works more generally and involves less analytic baggage.

Before moving on to global fields, we note a possible connection with pluripotential theory. Associate to the map $f:\PP^N_\CC\to\PP^N_\CC$ is an invariant current $T_f$, and if $[C_f]$ is the current of integration along the critical divisor, Bedford and Jonsson~\cite{bj} consider the value
\begin{equation}\label{eq:bj}\int G_f^{\mathrm{points}} [C_f]\wedge T_f^{N-1},\end{equation}
where $G_f^{\mathrm{points}}:\CC^N\to \RR$ is the Green's function on points (denoted simply by $G_f$ in \cite{bj, pi:specN}). In particular, this value determines the Lyapunov exponent of any map $f\in \powm{N}{d}(\CC)$. It seems plausible that the quantity defined in \eqref{eq:bj} is closely related to the quantity $\lambda_{\mathrm{crit}, \infty}$, and establishing an explicit relation might provide a lower bound on the Lyapunov exponent on $\powm{N}{d}$ which is arbitrarily large off a compact set. In the one-dimensional case, for example, the quantity defined in \eqref{eq:bj} becomes $\sum_{f'(c)=0}G_f^{\mathrm{points}}(c)$, while $\lambda_{\mathrm{crit}, \infty}(f)$ becomes $\max_{f'(c)=0}G_f^{\mathrm{points}}(c)$. Since the critical divisor always has degree $d-1$, these are easily related.


\section{Global fields}\label{sec:global}

Throughout this section, $K$ will denote a number field, although much of the discussion pertains as well when $K$ is a function field of characteristic $0$ or $p>d$. We will let $M_K$ denote the usual set of places of $K$, and we will define
\begin{equation}\label{eq:weildef}h_{\mathrm{Weil}}(f)=\sum_{v\in M_K}\frac{[K_v:\QQ_v]}{[K:\QQ]}\ph_v(f)\end{equation}
noting that, by standard facts about the splitting of valuations in extensions, this definition is in fact independent of $K$. Viewing $\pow{N}{d}$ as the affine space $\AA^{\dim(\pow{N}{d})}$,  note that $h_\mathrm{Weil}$ is simply the usual Weil height on the weighted projective space $\Proj A\otimes_{R}K$. Although this is not the same as the Weil height $h$ on the usual projective completion $\PP^M$, one easily verifies that $h_\mathrm{Weil}\leq h\leq d!h_\mathrm{Weil}$.
From this we conclude the usual Northcott property.
\begin{lemma}\label{lem:northcott}
For any $B_1, B_2\geq 1$ there exist only finitely many $f\in\pow{N}{d}(\overline{K})$ such that $h_\mathrm{Weil}(f)\leq B_1$, and such that the coefficients of $f$ lie in an extension $L/K$ with $[L:K]\leq B_2$.
\end{lemma}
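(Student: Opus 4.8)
The plan is to reduce to the classical Northcott finiteness theorem on ordinary projective space, using the height comparison recorded just above the statement. Write $M=\dim\pow{N}{d}$ and view $\pow{N}{d}$ as the affine chart $\AA^M\subseteq\PP^M$ with affine coordinates the $a_{i,I}$; let $h$ denote the standard Weil height on $\PP^M$. The only nontrivial input is the already noted inequality $h_{\mathrm{Weil}}\le h\le d!\,h_{\mathrm{Weil}}$, which holds termwise at each place because $1\le I_N\le d-1$, so that $\tfrac{1}{d-1}\log^{+}|a_{i,I}|_v\le\tfrac{1}{I_N}\log^{+}|a_{i,I}|_v\le\log^{+}|a_{i,I}|_v$ and the maxima over $(i,I)$ respect these bounds.

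First I would translate the two hypotheses into statements about $h$. The bound $h_{\mathrm{Weil}}(f)\le B_1$ gives $h(f)\le d!\,B_1$, a bound depending only on $B_1$ and $d$. For the field condition, if every coefficient $a_{i,I}$ of $f$ lies in some extension $L/K$ with $[L:K]\le B_2$, then $f$, regarded as a point of $\PP^M$, is defined over $L$, and $[L:\QQ]\le B_2[K:\QQ]$, a bound depending only on $B_2$ and $K$.

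Then I would invoke the classical Northcott property: the set of points of $\PP^M(\overline{\QQ})$ of Weil height at most $d!\,B_1$ and degree at most $B_2[K:\QQ]$ over $\QQ$ is finite. Since distinct elements $f\in\pow{N}{d}(\overline{K})$ give distinct coordinate points of $\PP^M$, finiteness of that set of points forces finiteness of the set of admissible $f$, which is exactly the assertion.

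There is really no serious obstacle here: the entire content is the comparison of the ad hoc, weight-adjusted height $h_{\mathrm{Weil}}$ with an honest projective height, and this has already been established. One could equivalently quote the Northcott property in the form valid for Weil heights on weighted projective spaces and bypass the comparison altogether.
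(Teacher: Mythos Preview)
Your argument is correct and is exactly the approach the paper intends: the paper does not give a separate proof of this lemma, but simply records the comparison $h_{\mathrm{Weil}}\le h\le d!\,h_{\mathrm{Weil}}$ and states that the Northcott property follows. Your write-up just fills in the (standard) details of that deduction, and in fact your per-place justification yields the sharper bound $h\le (d-1)\,h_{\mathrm{Weil}}$, which is more than enough.
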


Now, for any $D\in \Div^*_K(\PP^N)$, define
\begin{equation}\label{eq:divheight}h(D)=\sum_{v\in M_K}\frac{[K_v:\QQ_v]}{[K:\QQ]}\lambda_{v}(D)\end{equation}
and
\begin{equation}\label{eq:canheight}\hat{h}_f(D)=\sum_{v\in M_K}\frac{[K_v:\QQ_v]}{[K:\QQ]}G_{f, v}(D).\end{equation}
We note that both of these are independent of $K$, and that both sums are well-defined, since $\lambda_v(D)=G_{f, v}(D)=0$ for all but finitely many $v\in M_K$.

\begin{theorem}\label{th:canheightprops}
For any $D\in \Div_K^*(\PP^N)$, and any $f\in \pow{N}{d}(K)$, we have
\begin{equation}\label{eq:canheightlimit}\hat{h}_f(D)=\lim_{n\to\infty}d^{-n}h(f^n(D)),\end{equation}
\begin{equation}\label{eq:canheighttrans}\hat{h}_f(f_*(D))=d\hat{h}_f(D),\end{equation}
\begin{equation}\label{eq:canheightlinear}\hat{h}_f\left(\sum e_i D_i\right)\leq \sum\hat{h}_f(D_i),\end{equation}
\begin{equation}\label{eq:canheightdiff}\hat{h}_f(D)=h(D)+O(h_\mathrm{Weil}(f)),\end{equation}
where the implied constant depends just on $N$ and $d$,
and
\[\hat{h}_f(D)=0\text{ if $D$ is preperiodic for $f$}.\]
\end{theorem}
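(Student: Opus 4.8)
The plan is to derive all five assertions by summing, against the place weights $[K_v:\QQ_v]/[K:\QQ]$, the local statements already established: Lemma~\ref{lem:nonarchgreensprops} at the non-archimedean places and Lemma~\ref{lem:complexgreens} at the archimedean places. So the only genuine work is organizing these sums and justifying the one interchange of a limit with an infinite sum.

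The key preliminary fact I would prove first is a finiteness statement. Since $|a_{i,I}|_v\le 1$ for all but finitely many $v$, we have $\ph_v(f)=0$ off a finite set; likewise $\lambda_v(D)=0$ off a finite set; let $S\subseteq M_K$ be the union of these two sets together with the archimedean places. For $v\notin S$, Lemma~\ref{lem:localtransform}~(1) forces $\lambda_v(f_*(D))\le d\max\{\ph_v(f),\lambda_v(D)\}=0$, hence by induction $\lambda_v(f^n_*(D))=0$ for all $n\ge0$ and $G_{f,v}(D)=0$. Combined with the fact that $\lambda_v$ ignores multiplicities (Lemma~\ref{lem:lambdapropsnonarch}\,\eqref{it:linear} and the analogous statement over $\CC$, so that $\lambda_v(f^n(D))=\lambda_v(f^n_*(D))$), this shows that the sums defining $h(f^n(D))$ and $\hat{h}_f(D)$ are really \emph{finite} sums over $S$, with term-by-term domination uniform in $n$.

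Granting this, I would argue as follows. For \eqref{eq:canheightlimit}, exchange the limit with the finite sum over $v\in S$ and use that $G_{f,v}(D)=\lim_n d^{-n}\lambda_v(f^n_*(D))$ by definition, this being a genuine limit by Lemma~\ref{lem:nonarchgreensprops}\,\eqref{it:nongreensdefined} and Lemma~\ref{lem:complexgreens}. For \eqref{eq:canheighttrans}, sum the local identities $G_{f,v}(f_*(D))=dG_{f,v}(D)$ from Lemma~\ref{lem:nonarchgreensprops}\,\eqref{it:nongreenstransform} and Lemma~\ref{lem:complexgreens}. For \eqref{eq:canheightlinear}, sum the local inequalities $G_{f,v}(\sum e_iD_i)=\max_i G_{f,v}(D_i)\le\sum_i G_{f,v}(D_i)$, which follow from $\ZZ$-linearity of $f_*$ together with the $\max$-additivity of $\lambda_v$. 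For \eqref{eq:canheightdiff}, sum the estimates $G_{f,v}(D)=\lambda_v(D)+O(\ph_v(f))$, with absolute implied constant at the finite places (Lemma~\ref{lem:nonarchgreensprops}\,\eqref{it:nongreensasym}) and a constant depending only on $N$ and $d$ at the finitely many archimedean places (Lemma~\ref{lem:complexgreens}\,\eqref{it:complexasym}); since $\sum_v\frac{[K_v:\QQ_v]}{[K:\QQ]}\ph_v(f)=h_\mathrm{Weil}(f)$ and $\sum_{v\mid\infty}\frac{[K_v:\QQ_v]}{[K:\QQ]}=1$, this gives $\hat{h}_f(D)=h(D)+O(h_\mathrm{Weil}(f))$ with the constant depending only on $N$ and $d$. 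Finally, if $D$ is preperiodic then its forward orbit is finite, so $h(f^n(D))$ is bounded and \eqref{eq:canheightlimit} forces $\hat{h}_f(D)=0$ — equivalently, this is immediate from the local vanishing in Lemma~\ref{lem:nonarchgreensprops}\,\eqref{it:nongreensper} and Lemma~\ref{lem:complexgreens}.

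The main (mild) obstacle is the limit–sum interchange in \eqref{eq:canheightlimit}; the finiteness argument above is precisely what legitimizes it, and once that is in place everything else is formal place-by-place bookkeeping. A secondary point to check is uniformity of the implied constants at archimedean places in \eqref{eq:canheightdiff}, but since there are finitely many such places with total normalized weight $1$ and the local constants depend only on $N$ and $d$, this causes no real difficulty.
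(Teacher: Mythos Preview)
Your proposal is correct and follows essentially the same route as the paper: establish that $\ph_v(f)=\lambda_v(D)=0$ off a finite set $S$, deduce that all the sums are finite so that the limit--sum interchange in \eqref{eq:canheightlimit} is legitimate, and then obtain each of \eqref{eq:canheighttrans}--\eqref{eq:canheightdiff} and the preperiodic vanishing by summing the corresponding local identities from Lemmas~\ref{lem:nonarchgreensprops} and~\ref{lem:complexgreens}. The only cosmetic remark is that your appeal to $\sum_{v\mid\infty}[K_v:\QQ_v]/[K:\QQ]=1$ is unnecessary for \eqref{eq:canheightdiff}: the local bounds are of the form $|G_{f,v}(D)-\lambda_v(D)|\le C_v\,\ph_v(f)$ with $C_v\le 2$ at finite places and $C_v\le C(N,d)$ at infinite ones, so taking $C=\max\{2,C(N,d)\}$ and summing already gives $|\hat{h}_f(D)-h(D)|\le C\,h_{\mathrm{Weil}}(f)$ with no archimedean weight normalization needed.
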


\begin{proof}
For each place $v$, we have
\[G_{f, v}(D)=\lim_{n\to\infty} d^{-n}\lambda_v(f_*^n(D)).\]
For all but finitely many places, we have $\ph_v(f)=\lambda_v(D)=0$, and hence $\lambda_v(f_*^n(D))=0$ for all $n$. If we sum both sides over all places, suitably weighted, then each sum is actually finite, and so we may interchange the sum and the limit. This gives \eqref{eq:canheightlimit}, while \eqref{eq:canheighttrans} follows immediately from this.

Similarly,  Lemma~\ref{lem:nonarchgreensprops} and Lemma~\ref{lem:complexgreens} give
\[G_{f, v}(D)=\lambda_v(D)+O(\ph_v(f)),\]
where the implied constant depends only on $N$ and $d$. Summing this over all places gives \eqref{eq:canheightdiff}.

The property~\eqref{eq:canheightlinear} also follows directly from Lemma~\ref{lem:nonarchgreensprops} and Lemma~\ref{lem:complexgreens}.
Specifically, we have
\[G_{f, v}\left(\sum e_i D_i)\right)\leq\max\left\{G_{f, v}(D_i)\right\}\]
for each $v\in M_K$, and so
\begin{eqnarray*}
\hat{h}_f\left(\sum e_iD_i\right)&=&\sum_{v\in M_K}\frac{[K_v:\QQ_v]}{[K:\QQ]}G_{f, v}\left(\sum e_i D_i\right)\\
&=&\sum_{v\in M_K} \frac{[K_v:\QQ_v]}{[K:\QQ]} \max \left\{G_{f, v}(D_i)\right\}\\
&\leq& \sum_{v\in M_K}\frac{[K_v:\QQ_v]}{[K:\QQ]}\sum G_{f, v}(D_i)\\
&= & \sum \hat{h}_f(D_i).
\end{eqnarray*}
Note that the upper bound is not $\max\hat{h}_f(D_i)$, since the maximum value of the Greens functions may occur for different divisors at different places. But \eqref{eq:canheightlinear}  implies that $\hat{h}_f$ is bounded on the entire orbit of $D$ if $D$ is preperiodic, and \eqref{eq:canheighttrans} then implies that $\hat{h}_f(D)=0$.
\end{proof}

Note that it is not obvious, from what we have shown, that $\hat{h}_f(D)=0$ \emph{only if} $D$ is preperiodic for $f$, something which is true in the one-dimensional case. What we can conclude from the above is that $D$ is preperiodic for $f$ if and only if $\hat{h}_f(D)=0$ \emph{and} there is a bound on the degree of the irreducible components of $f_*^n(D)$ which is independent of $n$. We do not know whether or not there are examples of divisors $D$ with infinite forward orbit but $\hat{h}_f(D)=0$, and resolving this question would be of great interest.

Finally, we define
\begin{equation}\label{eq:critdef}h_\mathrm{crit}(f)=\hat{h}_f(C_f)=\sum_{v\in M_K} \frac{[K_v:\QQ_v]}{[K:\QQ]}\lambda_{\mathrm{crit}, v}(f),\end{equation}
noting again that $h_\mathrm{crit}(f)=0$ if $f$ is PCF. We speculate that this is the only case in which we have $h_\mathrm{crit}(f)=0$.
\begin{conjecture}\label{conj:critheight}
The PCF locus in $\pow{N}{d}(\CC)$ is precisely defined by the equality $h_\mathrm{crit}(f)=0$.
\end{conjecture}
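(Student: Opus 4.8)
One implication is already established: if $f$ is PCF then $C_f$ is preperiodic, so the final assertion of Theorem~\ref{th:canheightprops} gives $\hat h_f(C_f)=h_{\mathrm{crit}}(f)=0$. The plan is therefore to attack the converse, that $h_{\mathrm{crit}}(f)=0$ forces $f$ PCF, and the first step is to unwind the hypothesis. Since $h_{\mathrm{crit}}(f)$ is a sum over places of the non-negative quantities $\lambda_{\mathrm{crit},v}(f)=G_{f,v}(C_f)$, its vanishing forces $G_{f,v}(C_f)=0$ at every place $v$; by property~\eqref{it:nongreensbott} of Lemma~\ref{lem:nonarchgreensprops}, together with property~\eqref{it:complexbott} of Lemma~\ref{lem:complexgreens} at the archimedean places, this pins the sequence $\lambda_v(f^n_*(C_f))$ to a bounded range as $n\to\infty$, with bound $O(\ph_v(f))$ at $v$ and identically $0$ at all but finitely many $v$; and by Lemma~\ref{lem:lyapunovlocal} and Lemma~\ref{lem:complexcritlowerbound} it also forces $\ph_v(f)=O(1)$ at every place, hence $h_{\mathrm{Weil}}(f)=O(1)$. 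Note that since $h_{\mathrm{crit}}$ is built from heights over number fields, the hypothesis automatically places $f$ in $\pow{N}{d}(\overline{\QQ})$, and PCF maps over $\CC$ lie there too by Theorem~\ref{th:rigidity}, so nothing is lost by working over $\overline{\QQ}$.

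The second step is to recast the goal as a statement about the dynamics of divisors. Let $\mathcal{O}$ denote the set of irreducible hypersurfaces of $\PP^N$ occurring as a component of some $f^n_*(C_f)$; then $f$ acts on $\mathcal{O}$ by $W\mapsto f(W)$, and $f$ is PCF precisely when $\mathcal{O}$ is finite. Since $\hat h_f$ is indifferent to multiplicities and monotone (by property~\eqref{it:linear} of Lemma~\ref{lem:lambdapropsnonarch} and its archimedean analogue), the identity $\hat h_f(f^n_*(C_f))=d^n\hat h_f(C_f)=0$ forces $\hat h_f(W)=0$ for every $W\in\mathcal{O}$. Thus the conjecture would follow from the assertion --- flagged as open after Theorem~\ref{th:canheightprops} --- that a divisor of zero canonical height is preperiodic; by the same remark, the entire content of the converse reduces to producing a bound, independent of $n$, on the degrees of the irreducible components of $f^n_*(C_f)$.

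The main obstacle --- and the reason $\hat h_f$ was described as defined in an \emph{ad hoc} way --- is that the local functions $\lambda_v$ record only how closely a divisor approaches the invariant hyperplane $H$ along the unit polytorus in the remaining coordinates, so $\hat h_f$ imposes essentially no constraint on a hypersurface that stays uniformly away from $H$ at every place; since such hypersurfaces exist in arbitrarily large degree, vanishing of $\hat h_f$ cannot on its own be a finiteness statement, and a genuinely dynamical input is required. The argument we would look for has the shape: if $W\in\mathcal{O}$ has infinite forward orbit with $\deg(f^n(W))\to\infty$, then some iterate $f^n(W)$ must approach $H$ at some place $v$, contradicting $\lambda_v(f^n(W))=0$ there. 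Concretely we would attempt a local analysis at a non-archimedean $v$ with $\ph_v(f)>0$, tracking the Newton polytope of the defining form of $f^n_*(C_f)$ relative to $H$ and trying to show it must ``tilt toward $H$'' under iteration unless the orbit is eventually periodic; a global alternative is to compare $\hat h_f(W)$ with an honest height of $W$ as a cycle, in the spirit of Philippon or Zhang, and to invoke an equidistribution theorem, at the cost of a careful comparison of the two heights. A reduction to the one-dimensional case via the fibration $\pi:\RPE{N}{d}\to\moduli{N-1}{d}$ does not help, since the relevant base point is the $d$th power map, which is already PCF; any successful argument seems to need a new higher-dimensional ingredient. We expect controlling the degree growth of the forward orbit of $C_f$ under the zero-escape hypothesis to be the crux.
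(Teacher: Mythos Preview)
The statement you are addressing is labelled a \emph{Conjecture} in the paper, and the paper offers no proof of it; the sentence immediately following the statement reads ``Although Conjecture~\ref{conj:critheight} is a natural one, it is not necessary for the results in this paper.'' So there is no paper proof to compare against. Your write-up is really a correct diagnosis of why the statement is open rather than a proof of it, and you appear to recognize this.

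Your forward implication is fine and is exactly what the paper records: PCF implies $C_f$ preperiodic, whence $h_{\mathrm{crit}}(f)=\hat h_f(C_f)=0$ by Theorem~\ref{th:canheightprops}. Your reduction of the converse is also correct: non-negativity of each $G_{f,v}$ forces $G_{f,v}(C_f)=0$ at every place, and since $G_{f,v}(\sum e_iD_i)=\max_i G_{f,v}(D_i)$ this propagates to every irreducible component $W$ of every $f^n_*(C_f)$, so $\hat h_f(W)=0$ for all such $W$. You have thus reduced the conjecture precisely to the question the paper flags as open just after Theorem~\ref{th:canheightprops}: whether $\hat h_f(D)=0$ forces $D$ preperiodic, equivalently whether the degrees of the irreducible components of $f^n_*(C_f)$ stay bounded. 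Your remark that $h_{\mathrm{crit}}(f)=0$ already forces $f\in\pow{N}{d}(\overline{\QQ})$ is also right, by the same function-field argument used in the proof of Theorem~\ref{th:rigidity}: vanishing of the global height forces each local $\lambda_{\mathrm{crit},v}$ to vanish, hence each $\ph_v(f)=0$, hence the coefficients have no poles on any curve.

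What remains is exactly the obstacle you name: $\lambda_v$ and hence $\hat h_f$ measure only proximity to $H$ along the distinguished torus, and there is no mechanism in the paper linking $\hat h_f(W)=0$ to a degree bound on $W$. Your sketched approaches (Newton-polytope tracking at a bad place, or comparison with a Philippon/Zhang-type cycle height plus equidistribution) are reasonable lines of attack, but neither is carried out, and the paper does not supply the missing ingredient either. In short: your analysis is sound, but the converse direction is genuinely open, and your proposal does not close it.
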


Although Conjecture~\ref{conj:critheight} is a natural one, it is not necessary for the results in this paper.
\begin{proof}[Proof of Theorem~\ref{th:heights} and Corollary~\ref{cor:finiteness}]
Since we have
\[\lambda_{\mathrm{crit}, v}(f)=\ph_v(f)+O_{d, N, v}(1)\]
for all $v\in M_K$, and since the error terms vanish unless $v$ is non-archimedean or $p$-adic  for $p\leq d$, and since the errors depend only on the place of $\QQ$ which $v$ extends, we see that
\[h_\mathrm{crit}(f)=h_\mathrm{Weil}(f)+O_{d, N}(1)\]
as an immediate consequence of \eqref{eq:weildef} and \eqref{eq:critdef}. The finiteness claim of Corollary~\ref{cor:finiteness} is now a consequence of Lemma~\ref{lem:northcott}.
\end{proof}
We note that the finiteness in Lemma~\ref{lem:northcott}, coming from the Northcott property on $\PP^{\dim(\pow{N}{d})}$, is easily made effective (indeed the standard proof gives an algorithm, albeit a somewhat inefficient one, for computing sets of bounded height and algebraic degree). The effectiveness of Corollary~\ref{cor:finiteness}, then, comes down to two problems: making effective the error term in Theorem~\ref{th:heights} and finding an algorithm to test whether or not a given $f\in\pow{N}{d}(\overline{\QQ})$ is PCF. With regards to the first problem, we note that for a given $d$ and $N$, one can in principle perform the elimination of variables implied by Lemma~\ref{lem:nullstellensatz}, which is the only part of the proof of Theorem~\ref{th:heights} involving inexplicit constants. More generally, one can use the effective Nullstellensatz of Masser and W\"{u}stholtz~\cite{mw} to obtain a bound for these constant in terms of unknown $d$ and $N$, but since this bound is likely to be far from the truth, we have not gone so far as to write it down.
As to the problem of determining whether or not a given $f\in\pow{N}{d}(\overline{\QQ})$ is PCF, we note that this is intimately connected with Conjecture~\ref{conj:critheight}, and presents a tempting line of further inquiry.


\section{Computations}\label{sec:comp}

In this section we will outline the computations which lead to Theorem~\ref{th:computations}. Our computations were greatly aided by Manfred Minimair's package for computing Macaulay resultants in Maple \cite{mr}. The examples are presented in Table~\ref{table:examples}, along with a description of their critical orbits. These descriptions confirm both that the maps are PCF, and that they are pairwise non-conjugate, since conjugate morphisms have identical critical behaviour. The rest of this section outlines the confirmation that these are the only conjugacy classes. 

\begin{table}
\begin{center}
\begin{tabular}{@{}lll@{}} \toprule
$(a, b, c, d)$ & \multicolumn{2}{c}{Critical behaviour}  \\\toprule
\multirow{2}{*}{$(0, 0, 0, 0)$} & $D_1:\{x=0\}$ & $D_1\to D_1$\\
& $D_2:\{y=0\}$ & $D_2\to D_2$\\ \midrule
\multirow{4}{*}{$(0, 0, 0, -2)$} & $D_1:\{x=0\}$ & \multirow{2}{*}{$D_1\to D_1$} \\
& $D_2:\{y-1=0\}$ & \\
& $D_3:\{y+1=0\}$ & \multirow{2}{*}{$D_2\to D_3\to D_4\to D_4$} \\
& $D_4:\{y-3=0\}$ &\\\midrule
\multirow{6}{*}{$(-2, 0, 0, -2)$} & $D_1:\{x-1=0\}$ &\\
& $D_2:\{x+1=0\}$ & $D_1\to D_2\to D_3\to D_3$ \\
& $D_3:\{x-3=0\}$ &\\
& $D_4:\{y-1=0\}$ &\\
& $D_5:\{y+1=0\}$ & $D_4\to D_5\to D_6\to D_6$\\
& $D_6:\{y-3=0\}$ & \\\midrule
\multirow{3}{*}{$(0, 0, -1, 0)$} & $D_1:\{x=0\}$& \\
& $D_2:\{y=0\}$ & $D_1\to D_2\to D_3\to D_2$\\
& $D_3:\{y^2-x=0\}$ & \\\midrule
\multirow{3}{*}{$(0, 0, -2, 0)$} & $D_1:\{x=0\}$ & \\
& $D_2:\{y=0\}$ & $D_1\to D_2\to D_3\to D_3$\\
& $D_3:\{y^2-4x=0\}$ & \\\midrule
\multirow{2}{*}{$(0, -2, -2, 0)$} & $D_1:\{xy-1=0\}$ & \multirow{2}{*}{$D_1\to D_2\to D_2$} \\
& $D_2:\{x^2y^2-4x^3-4y^3+18xy-27=0\}$ &  \\\bottomrule
\end{tabular}
\end{center}\caption{Six PCF examples\label{table:examples}}
\end{table}

Let \[f_{a, b, c, d}(x, y)=\left(x^2+ax+by, y^2+cx+dy\right),\] where $a, b, c, d\in \QQ$. Then $C_f$ is defined by
\[xy+\frac{1}{2}xd+\frac{1}{2}ay+\frac{1}{4}(ad-bc)=0,\]
while  $f_*(C_f)$ is defined by
\begin{multline*}
x^2y^2-c^2x^3+\left(ac+\frac{1}{2}d^2\right)x^2y+\left(\frac{1}{2}a^2+bd\right)xy^2-b^2y^3\\
+\cdots +\frac{1}{256}(a^2d^2-27b^2c^2+4ca^3+4bd^3+18abcd)(ad-cb)^2=0,
\end{multline*}
where the intermediate terms have been suppressed. It follows from the results in Section~\ref{sec:local} (or just by an examination of the coefficients of $f_*(C_f)$  shown here) that for $f_{a, b, c, d}$ to be PCF, one must have $a, b, c, d\in\ZZ_p$ for any odd prime $p$. Similarly, one has $G_{f, 2}(C_f)>0$ unless all of the coefficients above turn out to be 2-adically integral. It follows that $b, c\in\ZZ_2$, while $a, d\in 2\ZZ_2$. So we have seen that $a, b, c, d\in\ZZ$, if $f_{a, b, c, d}$ is to be PCF, with $a$ and $d$ even.

Considering the usual (complex) absolute value, one sees easily that
\begin{multline*}
\max\left\{2\log|c|, 2\log|b|, \log|ac+\frac{1}{2}d^2|, \log|\frac{1}{2}a^2+bd|\right\}\\\geq 2\log\max\{|a|, |b|, |c|, |d|\} -2 \log\left(1+\sqrt{3}\right),
\end{multline*}
and so by the results of Section~\ref{sec:complex} we have
\[\lambda_\infty(f_*(C_f))\geq 2\ph_\infty(f) - 2\log\left(1+\sqrt{3}\right)-2\log 2-1,\]
although a more careful examination of the proof of Lemma~\ref{lem:NMineq} for polynomials of degree 4 shows that the lower bound can be increased by 1.
We then have either
\[\lambda_\infty(f_*(C_f))\leq \ph_v(f)+2\log 2,\]
resulting in $\ph_\infty(f)\leq 4\log 2+2\log(1+\sqrt{3})$, or else
\[0= G_{f, \infty}(f_*(C_f))\geq \lambda_\infty(f_*(C_f))-\log\left(\frac{1}{1-2^{-1/2}}\right),\]
from which $\ph_\infty(f)\leq \frac{3}{2}\log 2+\log(1+\sqrt{3})-\frac{1}{2}\log(2-\sqrt{2})$. In either case we get $\max\{|a|, |b|, |c|, |d|\}\leq 119$.

Since $a$ and $d$ are even, this leaves 808,890,481 four-tuples of integers to check (although some of these represent the same conjugacy class of map). We wrote a script in Sage~\cite{sage} which considered these tuples, and tried to show that $\lambda_{\mathrm{crit}, 2}(f_{a, b, c, d})$ was positive (by checking for non-integrality in the coefficients of $C_f$, $f_*(C_f)$, and some of the coefficients of $f^2_*(C_f)$) or that $\lambda_{\mathrm{crit}, \infty}(f_{a, b, c, d})$ was positive (by looking at the sizes of coefficients in the complex absolute value). Overnight, Sage pared our list down to only 127 values of $(a, b, c, d)\in\ZZ^4$ for which $f_{a, b, c, d}$ could possibly be PCF. 

The remaining tuples were treated individually using Minimair's MR package for Maple. In particular, for 83 of these tuples, an explicit calculation of $f_*^3(C_f)$ showed that the form defining this divisor (or some factor of it) had coefficients large enough to ensure that $\lambda_{\mathrm{crit}, \infty}(f)>0$, using the estimates from Section~\ref{sec:complex}. For another 19 tuples, it was sufficient to consider $f^4_*(C_f)$ and its factors, while in 3 instances it was necessary to compute at least some component of $f^5_*(C_f)$.  The remaining 22 tuples gave rise to PCF morphisms, and every one of these is conjugate to one of the examples presented in Theorem~\ref{th:computations}.

We note, as a remark on the examples of the form $(a, b, c, d)=(0, 0, *, 0)$, that if $f(x, y)=(x^2, y^2+cx)$, $D_w:\{y^2-w^2x=0\}$, and $E:\{x=0\}$, it is easy to show that $f_*(E)=2E$ and $f_*(D_w)=2D_{w^2+c}$. Since $C_f=2E+D_0$, it follows that $f$ is PCF if and only if the univariate map $w\mapsto w^2+c$ is, and the structures of the critical orbits coincide.

%
%

\end{document}